\def\kk{\mathbbm{k}}
\def\Z{\mathbbm{Z}}
\def\C{\mathbbm{C}}
\def\Q{\mathbbm{Q}}
\def\un{\mathbbm{1}}
\def\tr{\mathrm{tr}}
\def\End{\mathrm{End}}
\def\GL{\mathrm{GL}}
\def\Ker{\mathrm{Ker}}
\def\Sp{\mathrm{Sp}}
\def\rk{\mathrm{rk}}
\def\Id{\mathrm{Id}}
\def\Hom{\mathrm{Hom}}
\def\Res{\mathrm{Res}}
\def\g{\mathfrak{g}}
\def\h{\mathfrak{h}}
\def\la{\lambda}
\def\gl{\mathfrak{gl}}
\def\sl{\mathfrak{sl}}
\def\so{\mathfrak{so}}
\def\sp{\mathfrak{sp}}
\def\osp{\mathfrak{osp}}
\def\U{\mathsf{U}}
\def\eps{\varepsilon}
\def\into{\hookrightarrow}
\def\onto{\twoheadrightarrow}
\newtheorem{theor}[equation]{Theorem}
\newtheorem{lemma}[equation]{Lemma}
\newtheorem{prop}[equation]{Proposition}
\newtheorem{cor}[equation]{Corollary}
\newtheorem{conj}[equation]{Conjecture}
\newtheorem{question}[equation]{Question}
\newtheorem{remark}[equation]{Remark}
\numberwithin{equation}{section}
\newcounter{braid}
\newcounter{strands}
\def\cross{%
  \@ifnextchar^{\message{Got sup}\cross@sup}{\cross@sub}}
\def\cross@sup^#1_#2{\render@cross{#2}{#1}}
\def\cross@sub_#1{\@ifnextchar^{\cross@@sub{#1}}{\render@cross{#1}{1}}}
\def\cross@@sub#1^#2{\render@cross{#1}{#2}}
\def\render@cross#1#2{
  \def\strand{#1}
  \def\crossing{#2}
  \pgfmathsetmacro{\cross@y}{-\value{braid}*\braid@h}
  \pgfmathtruncatemacro{\nextstrand}{#1+1}
  \foreach \thread in {1,...,\value{strands}}
  {
    \pgfmathsetmacro{\strand@x}{\thread * \braid@w}
    \ifnum\thread=\strand
    \pgfmathsetmacro{\over@x}{\strand * \braid@w + .5*(1 - \crossing) * \braid@w}
    \pgfmathsetmacro{\under@x}{\strand * \braid@w + .5*(1 + \crossing) * \braid@w}
    \draw[braid] \pgfkeysvalueof{/tikz/braid start} +(\under@x pt,\cross@y pt) to[out=-90,in=90] +(\over@x pt,\cross@y pt -\braid@h);
    \draw[braid] \pgfkeysvalueof{/tikz/braid start} +(\over@x pt,\cross@y pt) to[out=-90,in=90] +(\under@x pt,\cross@y pt -\braid@h);
    \else
    \ifnum\thread=\nextstrand
    \else
     \draw[braid] \pgfkeysvalueof{/tikz/braid start} ++(\strand@x pt,\cross@y pt) -- ++(0,-\braid@h);
    \fi
   \fi
  }
  \stepcounter{braid}
}
\tikzset{braid/.style={double=\pgfkeysvalueof{/tikz/braid colour},double distance=1pt,line width=2pt,white}}
\newcommand{\braid}[2][]{%
  \begingroup
  \pgfkeys{/tikz/strands=2}
  \tikzset{#1}
  \pgfkeysgetvalue{/tikz/braid width}{\braid@w}
  \pgfkeysgetvalue{/tikz/braid height}{\braid@h}
  \setcounter{braid}{0}
  \let\dsigma=\cross
  #2
  \endgroup
}
\title[Defining algebra for the Links-Gould polynomial]{A cubic defining algebra for the Links-Gould polynomial}
\author{{\large I\lowercase{van} M\lowercase{arin} \& E\lowercase{mmanuel} W\lowercase{agner}} }
\begin{document}

\setcounter{tocdepth}{1}
\maketitle

\begin{center}
Institut de Math\'ematiques de Jussieu \\
Universit\'e Paris 7 \\
175 rue du Chevaleret \\
75013 Paris \\
France\\
\url{marin@math.jussieu.fr}\\
--- \\
Institut de Math\'ematiques de Bourgogne UMR 5584 \\
Universit\'e de Bourgogne\\
7 Avenue Alain Savary BP 47870\\
21078 Dijon Cedex\\
France\\
\url{emmanuel.wagner@u-bourgogne.fr}
\end{center}
\bigskip

\tableofcontents

\section{Introduction}



In 1992, Links and Gould introduced a polynomial invariant of knots
and links out of a family of 4-dimensional representations of
the quantum Lie superalgebra $\mathsf{U}_q \sl(2|1)$. This invariant satisfies
a \emph{cubic} skein relation, that is the simplest skein relation
on simple crossings that can be asked for, the quadratic one being
characteristic of the HOMFLY-PT polynomial. It shares this property with
the Kauffman polynomial (which corresponds to the quantum orthosymplectic
Lie algebras and their standard representations), but behaves quite differently,
notably with respect to disjoint union of links : the Kauffman polynomial
is multiplicative with respect to the disjoint union of two links, whereas the Links-Gould
polynomial vanishes on such disjoint unions.

The additional skein relations satisfied by the Kauffman polynomial are relations
of the so-called Birman-Wenzl-Murakami (BMW) algebra (see \cite{BIRMANWENZL}, \cite{MURAKAMI}). This $BMW_n$
algebra is a quotient of the group algebra $K B_n$ of the braid group
over some field $K$ of characteristic $0$
by a generic cubic relations and by some other relation in $K B_3$, and there
exists a single Markov trace on the tower of algebras $(BMW)_n$, whose value
on closed braids provides the Kauffman invariant. This algebra is a deformation
of the classical algebra of Brauer diagrams, and as such admits a basis
with a nice combinatorial description. It describes the centralizer algebra of the
action of $\mathsf{U}_q \mathfrak{osp}(V)$ inside $V^{\otimes n}$.
In addition, since the faithful Krammer representation
factorizes through this  $BMW$ algebra, $B_n$ embeds into the group $BMW_n^{\times}$
of invertible elements of the $BMW$ algebra ; its Zariski closure is described in \cite{BMW}.

The goal of this paper is to define a similar algebra for the Links-Gould polynomial.
We first consider the corresponding centralizer algebra $LG_n$ and prove the
following statement, analogous to the well-known fact that the $BMW_n$ algebra, defined as
a (quantum) centralizer algebra, is a quotient of $K B_n$ (see corollary \ref{corsurj}).

\begin{theor} The natural morphism $K B_n \to LG_n$ is surjective.
\end{theor}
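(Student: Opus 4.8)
The plan is to prove the equality $\mathcal{B}_n = LG_n$, where $\mathcal{B}_n$ denotes the image of $K B_n$ inside $LG_n = \End_{\U_q \sl(2|1)}(V^{\otimes n})$. The generator $\sigma_i$ of $B_n$ acts on $V^{\otimes n}$ through the braiding $\check R_i = \Id^{\otimes(i-1)} \otimes \check R \otimes \Id^{\otimes(n-i-1)}$, and since $\check R \in \End_{\U_q\sl(2|1)}(V\otimes V)$ the inclusion $\mathcal{B}_n \subseteq LG_n$ is immediate; only surjectivity is at stake. I would argue by induction on $n$, the whole point being to reduce the global statement to the local two-strand analysis that produces the cubic relation in the first place.

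For the base case $n=2$, I would first record the decomposition of $V\otimes V$ into $\U_q\sl(2|1)$-submodules. This tensor square splits as a direct sum of exactly three pairwise non-isomorphic irreducibles $V\otimes V = W_1 \oplus W_2 \oplus W_3$, which is precisely the multiplicity-free decomposition responsible for the cubic skein relation. Consequently $LG_2 = \End_{\U_q\sl(2|1)}(V \otimes V)$ is the three-dimensional commutative algebra spanned by the projections $e_1, e_2, e_3$ onto the three summands $W_j$. On the other hand $\check R$ preserves each $W_j$ and acts there by a scalar $\lambda_j$, with the $\lambda_j$ pairwise distinct; hence $\check R$ has three distinct eigenvalues and the spectral projections $e_j$ are polynomials in $\check R$ by Lagrange interpolation. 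Therefore $e_1, e_2, e_3 \in \mathcal{B}_2$ and $\mathcal{B}_2 = LG_2$.

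The inductive step is where the real work lies. Assume $\mathcal{B}_{n-1} = LG_{n-1}$, viewing $LG_{n-1}$ inside $LG_n$ as $\End_{\U_q\sl(2|1)}(V^{\otimes(n-1)}) \otimes \Id_V$. Applying the $n=2$ result in strands $(i,i+1)$ shows that every local projection, hence every copy of $LG_2$ placed in adjacent strands, already lies in $\mathcal{B}_n$. The theorem then follows from the representation-theoretic claim that $LG_n$ is generated as an algebra by $LG_{n-1}$ together with the last braiding $\check R_{n-1}$, equivalently by the nearest-neighbour centralizers. I expect this generation statement to be the main obstacle: it rests on the semisimplicity of $V^{\otimes n}$ as a $\U_q\sl(2|1)$-module, which is delicate in the Lie-superalgebra setting because tensor products of typical representations need not a priori be semisimple, and on sufficient control of the Bratteli diagram of the tower $LG_1 \subseteq LG_2 \subseteq \cdots$ to guarantee that each simple summand of $V^{\otimes n}$, together with its multiplicity space, is already resolved by the two-strand decompositions. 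A convenient way to secure both points is to pass to generic values of $q$ and of the representation parameter and to argue by flat deformation from the classical enveloping algebra $\U\sl(2|1)$, where the decomposition of $V^{\otimes n}$ can be computed explicitly and the surjectivity becomes a super Schur--Weyl-type statement for the family of four-dimensional representations. Granting the generation lemma, one concludes $\mathcal{B}_n \supseteq \langle LG_{n-1}, \check R_{n-1}\rangle = LG_n$, which completes the induction.
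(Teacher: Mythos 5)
Your base case for $n=2$ is correct and matches the paper's setup (three pairwise non-isomorphic summands of $V\otimes V$, three distinct eigenvalues of $\check R$, spectral projections by interpolation). But your inductive step reduces the theorem to the generation statement $LG_n = \langle LG_{n-1}, \check R_{n-1}\rangle$ and then stops: you explicitly flag this as ``the main obstacle'' and offer only the ingredients (semisimplicity, Bratteli control, genericity) that might enter its proof. That statement is not a deferrable technicality --- it \emph{is} the theorem. Generation of a centralizer tower by the previous floor plus one local braiding is a special property that fails for most modules, even semisimple multiplicity-free-restriction ones; this is precisely the ``strongly multiplicity free'' phenomenon classified by Lehrer--Zhang for ordinary Lie algebras, which the paper cites as the analogue of what must be proved here. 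Concretely, the one-step Bratteli diagram being multiplicity free does not suffice: the two-step restrictions are not, e.g.\ $[a,k-1]_{r-2}\otimes V(0,\alpha)^{\otimes 2}$ contains $[a,k]_r$ with multiplicity up to $4$, and one must show that local operators act irreducibly on these multiplicity spaces, and additionally that the simple blocks stay pairwise distinguishable so that density yields the whole direct sum of matrix algebras. The paper does all of this by hand: it replaces the braid group by the holonomy Lie algebra $\mathcal{T}_n$ of infinitesimal braids and proves by induction that each $\Hom_{\U\g}([a,k]_r, V(0,\alpha)^{\otimes r})$ is irreducible under $\U\mathcal{T}_r$, the crux being the $4$-dimensional multiplicity spaces, handled via explicit eigenvalue computations for $Y=t_r-t_{r-1}$ using the Casimir (lemma \ref{lemCT}), genericity of $\alpha$ to separate the spectra of $Y$ and $u=t_{r-1,r}$, and a block-matrix argument ($\det M\neq 0$, nonzero rows and columns of $N$); the blocks are separated because the spectrum of $\sum\Omega_{ij}$ determines $\Delta_r(C)$ and hence $[a,k]_r$. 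None of this is present, even in outline, in your sketch.

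Moreover, your proposed engine --- ``pass to generic parameters and argue by flat deformation from the classical enveloping algebra, where surjectivity becomes a super Schur--Weyl-type statement'' --- would fail as written. In the classical limit the braid group action factors through the symmetric group, and $K\mathfrak{S}_n$ does \emph{not} surject onto the centralizer: already for $n=2$, $LG_2$ is $3$-dimensional while the image of the transposition spans at most a $2$-dimensional subalgebra. The correct classical substitute for the braid group is not $\mathfrak{S}_n$ but $\U\mathcal{T}_n$ (the $\Omega_{ij}$'s), and the return trip to the quantum setting is itself non-trivial: the paper passes from the surjectivity of $\U\mathcal{T}_n$ to that of $\C((h))P_n$ by a monodromy-plus-Nakayama argument (incidentally proving the stronger statement that the \emph{pure} braid group already surjects), and then identifies the monodromy representation with the R-matrix one via Geer's Kohno--Drinfeld theorem for $\sl(2|1)$. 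Without these ingredients --- the infinitesimal replacement, the irreducibility analysis on multiplicity spaces, and the quantization transfer --- your induction has no content beyond restating the theorem.
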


As a consequence, $LG_n$ is a natural candidate for being an analogue of
the $BMW_n$ algebra for the Links-Gould polynomial. As it is a centralizer
algebra, we have a natural (combinatorial) description of its simple modules, but
we do not have yet a satisfactory description of its elements. We have the
following conjecture about its dimension.

\begin{conj} For all $n$,
$$
\dim LG_{n+1} = \frac{(2n)!(2n+1)!}{(n! (n+1)!)^2}
$$
\end{conj}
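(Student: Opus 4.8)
The plan is to compute $\dim LG_{n+1}$ through the double centralizer theorem. Assuming the category of $\U_q\sl(2|1)$-modules generated by $V$ under tensor products and direct summands is semisimple, the Schur--Weyl philosophy gives a decomposition $V^{\otimes(n+1)} \cong \bigoplus_\la L(\la)\otimes M_\la$ into simple $\U_q\sl(2|1)$-modules $L(\la)$ with multiplicity spaces $M_\la$, the latter being exactly the simple $LG_{n+1}$-modules. Hence
\[
\dim LG_{n+1} = \sum_\la \left(\dim M_\la\right)^2 = \sum_\la m_\la(n+1)^2,
\]
where $m_\la(n+1)$ is the multiplicity of $L(\la)$ in $V^{\otimes(n+1)}$. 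The first---and, I expect, the hardest---step is therefore to secure this semisimplicity. In the super setting it is genuinely delicate, since tensor products of typical modules may acquire atypical, non-semisimple summands. The way around it is to exploit the continuous parameter $\alpha$ of the $4$-dimensional family $V=V_\alpha$: atypicality is a codimension-one condition on highest weights, and for $\alpha$ generic (e.g.\ transcendental) every simple constituent of every $V_\alpha^{\otimes n}$ should stay off the atypicality walls, so that each $V_\alpha^{\otimes n}$ is typical and completely reducible. One then argues that $\dim LG_n$ is constant on this generic locus by flatness of the centralizer family.

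Next I would determine the branching rule for $-\otimes V$. The highest weights of the simple summands lie in the rank-two weight lattice of $\sl(2|1)$ (shifted by a multiple of $\alpha$ at each tensor step), and tensoring with $V$ amounts to adding the four weights of $V$ and retaining the dominant typical results. This should realize the Bratteli diagram of the tower $(LG_n)_n$ as a \emph{confined lattice walk in the plane}: the four weights of $V$ furnish up to four elementary steps, of which exactly three are expected to survive at a generic interior weight (matching the three summands of $V\otimes V$ forced by the cubic skein relation), while one or more steps are suppressed along the dominance and typicality walls. The multiplicity $m_\la(n+1)$ is then the number of length-$n$ walks from the base vertex $V$ to $\la$, and
\[
\dim LG_{n+1} = \#\{\text{pairs of length-}n\text{ walks from }V\text{ ending at a common vertex}\}.
\]

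It remains to evaluate this count. Writing the target in the suggestive forms $\frac{(2n)!\,(2n+1)!}{(n!\,(n+1)!)^2} = (2n+1)\,C_n^2 = C_n\binom{2n+1}{n}$, where $C_n$ is the $n$-th Catalan number, points to a product structure: the confined planar walk should split, after a change of coordinates, into one half-line reflecting component whose return generating function produces the factor $C_n$, coupled to a second, less constrained component supplying $\binom{2n+1}{n}$. Concretely I would pass from ``pairs of walks with common endpoint'' to a single walk of length $2n$ in the doubled diagram and evaluate it either by the reflection principle, or---more robustly---by the Lindstr\"om--Gessel--Viennot lemma, which turns the pair count into a determinant of binomial coefficients that telescopes to the stated product; the surjection $K B_{n+1}\to LG_{n+1}$ of the Theorem can serve as a complementary source of an a priori upper bound. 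The combinatorial identity itself is then routine; the real work, and the main obstacle, is upstream: proving semisimplicity uniformly in $n$ and pinning down the exact multiplicities of the surviving branches along the walls, since it is precisely the boundary behaviour of the walk that controls whether the squared path-counts sum to $(2n+1)C_n^2$ rather than to a nearby value.
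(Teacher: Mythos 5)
This statement is not proved in the paper: it is stated as a conjecture, verified by computer for $n \leq 50$, with several combinatorial interpretations recorded ($(2n+1)C_n^2$, pairs of non-crossing lattice paths in a square, non-crossing partitions) but no argument. So the question is whether your sketch closes the gap the authors left open, and it does not. Your first two steps are exactly the paper's established framework, not new input: genericity of $\alpha$ guarantees typicality and complete reducibility of $V(0,\alpha)^{\otimes n}$ (this is how $LG_n(\alpha)$ and its generic version $LG_n$ are set up in section \ref{sectstruct}), the branching rule is computed there explicitly, $[a,k]_r \otimes V(0,\alpha) = [a-1,k+1]_{r+1} + [a,k]_{r+1} + [a,k+1]_{r+1} + [a+1,k]_{r+1}$ with terms omitted when $a<0$, and $\dim LG_{n+1} = \sum_\lambda m_\lambda(n+1)^2$ as a count of pairs of Bratteli paths follows immediately (you do not even need the double centralizer theorem here: since $LG_{n+1}$ is \emph{defined} as the centralizer, semisimplicity of $V^{\otimes (n+1)}$ alone gives $LG_{n+1} \simeq \bigoplus_\lambda \End(M_\lambda)$). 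The entire weight of a proof therefore falls on your last step, which you declare routine; it is precisely the open content of the conjecture, and no reflection argument or Lindstr\"om--Gessel--Viennot determinant is actually exhibited or evaluated.

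Moreover your description of the walk, on which any such evaluation would rest, is wrong in a detail that matters. At a generic interior vertex ($a \geq 1$) \emph{four} branches survive, not three; only on the wall $a=0$ are there three — this is why $V^{\otimes 2}$ has three summands (the starting vertex $[0,0]_1$ lies on the wall), not because of any interior three-step structure "matching the cubic relation". The correct model is a walk in $\{(a,k) : a \geq 0,\ k \geq 0\}$ with step set $(-1,+1)$, $(0,0)$, $(0,+1)$, $(+1,0)$, the first step suppressed at $a=0$; the presence of a stay step and of steps moving parallel to the wall takes this outside the standard reflectable classes, so the claimed factorization into a Catalan component times a $\binom{2n+1}{n}$ component is an expectation, not an argument. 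Your fallback that the surjection $K B_{n+1} \twoheadrightarrow LG_{n+1}$ supplies an a priori upper bound also buys nothing, since $K B_{n+1}$ is infinite dimensional (and the finite-dimensional quotient $A_{n+1}$ has only conjecturally the same dimension). In short: your proposal rederives the reduction to a lattice-walk identity that the authors already possess — it is how they computed the dimensions up to $n=50$ — and then asserts, without proof, the enumerative identity that constitutes the conjecture itself.
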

We checked this formula for $n \leq 50$.  It gives
$\dim LG_1 = 1$, $\dim LG_2 = 3$, $\dim LG_3 = 20$, $\dim LG_4 = 175$, $\dim LG_5 = 1764$, $\dim LG_6 = 19404$.
As communicated to us by F. Chapoton, this formula appears in the following
setting. It is the number of pairs of paths, inside a square whose side has length $n+1$,
which go from the top left corner to the down right by down and right moves, which do not cross each other (see figure \ref{figchemsLG3}
for the corresponding 20 diagrams for $LG_3$). It suggests
the possibility of a natural basis of this algebra indexed by such combinatorial objects.
Sloane's encyclopedia of integer sequences suggests another (related)
combinatorial interpretation, as the number of non-crossing partitions
of $2n+1$ inside $n+1$ blocs. R. Blacher suggested to us to consider it as $(C_n)^2 (2n+1)$, where $C_n$
is the Catalan number, and to interpret it as couples of binary trees together with a suitable marking of the
leaves. In any case, it is natural to ask
for a pictorial description of this algebra, and also if it admits
a natural `cellular structure' in the sense of Graham and Lehrer.

Our next goal is to get a presentation of $LG_n$ by generators and relations. We know
that it is a quotient of the generic cubic Hecke algebra, namely the quotient $H_n$ of
the group algebra of the braid group by a generic cubic relation. The latter is an infinite dimensional
algebra for $n \geq 6$ (see section \ref{secthecke}).
In \cite{Ishii3}, Ishii introduced a relation $r_2 \in H_3$ satisfied inside the skein
module of the Links-Gould polynomial, and proved that
the quotient of $H_3$ by (the image of) $r_2$ has dimension $20$. We first prove that
the natural morphism $H_4/(r_2) \onto LG_4$ is \emph{not} an isomorphism, and introduce
a new relation $(r_3)$ such that $H_4/(r_3) = H_4/(r_2,r_3) = LG_4$.
We let $A_n = H_n/(r_2,r_3)$ for $n \geq 4$, $A_3 = H_3/(r_2)$, $A_2 = H_2$. By definition,
$A_n \simeq LG_n$ for $n \leq 4$. We prove (see section \ref{Markovtrace}) the following.
\begin{theor} \ 
\begin{enumerate}
\item $A_5 \simeq LG_5$.
\item $A_n$ is finite dimensional for all $n$.
\item There exists only one Markov trace on $(A_n)_{n \geq 1}$.
\end{enumerate}
\end{theor}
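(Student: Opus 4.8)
The three assertions are intertwined, so I would establish them in the order (1), (2), (3), the spanning mechanism behind (2) being exactly what forces the uniqueness in (3).

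For (1), I would use that $LG_n$ is, for every $n$, a quotient of $KB_n$ through which both the cubic relation and Ishii's relation factor, so that $r_2$ and $r_3$ hold in $LG_5$ and there is a canonical surjection $A_5 \onto LG_5$. It then suffices to prove $\dim A_5 \le \dim LG_5 = 1764$, the right-hand value being the one predicted by the conjecture and confirmed by computation. I would obtain such a bound by producing an explicit spanning set of $A_5$: start from the monomials of $H_5$ given by braid words in which each generator occurs with exponent in $\{0,1,2\}$ (using the cubic relation), and then rewrite the long ones using $r_2$, $r_3$ and their translates. Since $A_5$ is given by an explicit finite presentation, this reduction is a finite computation that can be checked by machine; together with the surjection it forces $A_5 \simeq LG_5$.

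For (2), I would argue by induction on $n$, the cases $n \le 5$ being covered by (1) and the definitions. The crucial point is that $H_{n+1}$ is already infinite-dimensional for $n+1 \ge 6$, so finiteness cannot follow from the cubic and braid relations alone: it must be produced by $r_2$ and $r_3$, imposed through all their translates $\sigma_i \mapsto \sigma_{i+k}$. The goal is to show that $A_{n+1}$ is a finitely generated left $A_n$-module, which I would do in two steps. First a collapsing lemma,
\[
A_n\,\sigma_n\,A_n\,\sigma_n\,A_n \ \subseteq\ \sum_{0 \le j \le 2} A_n\,\sigma_n^{\,j}\,A_n ,
\]
so that a word containing several occurrences of the top generator reduces to one with a single $\sigma_n$-block; the obstructing patterns are precisely the $\sigma_n\sigma_{n-1}\sigma_n$- and $\sigma_{n-2}\sigma_{n-1}\sigma_n\sigma_{n-1}\sigma_n$-type words, which live in the translated copies of $H_3$ and $H_4$ and are controlled by $r_2$ and $r_3$. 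Second an absorption lemma,
\[
A_n\,\sigma_n^{\,j}\,A_n \ \subseteq\ \sum_{\tau \in \Lambda} A_n\,\sigma_n^{\,j}\,\tau \ +\ A_n ,
\]
with $\Lambda$ a fixed finite subset of $A_{n-1}$, replacing the trailing factor by finitely many cosets. Granting these, $\dim A_{n+1} \le C_n \dim A_n$, and finiteness follows. I expect the collapsing lemma to be the main obstacle: one must exhibit a terminating (ideally confluent) rewriting system and verify, by a careful analysis of how the top strand meets $A_{n-1}$, that the braid relations together with $r_2$ and $r_3$ suffice to close it up.

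For (3), existence is supplied by the Links-Gould polynomial itself: the invariant is a Markov trace on the tower $(LG_n)$, and the surjections $A_n \onto LG_n$ pull it back to $(A_n)$. The content is therefore uniqueness, which I would derive from the same spanning structure. Fixing the normalization and the two Markov parameters attached to $\sigma_n$ and $\sigma_n^2$ (equivalently to $\sigma_n$ and $\sigma_n^{-1}$, via the cubic relation), I would show by induction that the trace is determined: for $x,y \in A_n$ and $j \in \{1,2\}$, cyclicity gives $\tr(x\,\sigma_n^{\,j}\,y) = \tr(yx\,\sigma_n^{\,j})$ with $yx \in A_n$, and the Markov property reduces this to $\tr_n(yx)$, known by induction. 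Since the collapsing and absorption lemmas give $A_{n+1} = A_n + A_n\sigma_n A_n + A_n\sigma_n^2 A_n$, every element is a combination of such terms, so two Markov traces agreeing on $A_n$ agree on $A_{n+1}$. The remaining point is that the parameters themselves are forced; this I would check by evaluating the braid relation $\sigma_n\sigma_{n-1}\sigma_n = \sigma_{n-1}\sigma_n\sigma_{n-1}$ against the cubic relation at the first level where the resulting linear constraints become nondegenerate.
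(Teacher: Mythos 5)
Your route for part (1) is workable in principle, though different from the paper's: instead of a machine rewriting computation bounding $\dim A_5$, the paper uses that $H_5$ is a flat deformation of $\overline{K}\Gamma_5$ (Coxeter's theorem, so $H_5$ is split semisimple) and reads off from the character table of $\Gamma_5$ and the branching rules exactly which irreducible representations of $H_5$ kill $r_2$ and $r_3$; the squares of their dimensions sum to $1764$, which equals $\dim LG_5$ as computed from the Bratteli diagram (this value is a fact, not part of the conjecture), and the canonical surjection $A_5 \twoheadrightarrow LG_5$ then gives the isomorphism. The genuine gap is in (2): your collapsing lemma
\[
A_n\,\sigma_n\,A_n\,\sigma_n\,A_n \ \subseteq\ \sum_{0\le j\le 2} A_n\,\sigma_n^{\,j}\,A_n
\]
is \emph{false} for these algebras, and its failure is precisely the central discovery of the paper's inductive analysis. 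The relations $r_2$ and $r_3$ do not reduce the pattern $s_n^{-1}s_{n-1}s_n^{-1}$ to a single-$s_n$ block: already at $n=3$ the span of the $b_1 s_3^{r} b_2$ with $b_1,b_2$ in a basis of $LG_3$ has dimension $174=\dim LG_4-1$, and
\[
LG_4 \;=\; \Bigl(\sum_{r\in\{-1,0,1\}} LG_3\, s_3^{r}\, LG_3\Bigr)\ \oplus\ K\, s_3^{-1}s_2 s_3^{-1},
\]
so $s_3^{-1}s_2s_3^{-1}$, which lies in $LG_3 s_3^{-1} LG_3 s_3^{-1} LG_3$, escapes your right-hand side. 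What $r_3$ actually provides is weaker: it lets one push $s_1^{\pm1}$ (and hence, inductively, all of $A_{n-1}$) \emph{past} this pattern modulo $\sum_r A_n s_n^r A_n$, yielding the correct decomposition
\[
A_{n+1} \;=\; A_n + A_n s_n A_n + A_n s_n^{-1} A_n + A_{n-2}\, s_{n-1}^{-1} s_n s_{n-1}^{-1},
\]
with one extra summand at every level. Finite-dimensionality does follow from this, but not from your two lemmas as stated, and no terminating rewriting system can eliminate the extra pattern.

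The same omission infects your uniqueness induction in (3), which rests on the false identity $A_{n+1}=A_n+A_n\sigma_nA_n+A_n\sigma_n^2A_n$: you must also determine the trace on elements $\beta\, s_n^{-1}s_{n-1}s_n^{-1}$ with $\beta\in A_{n-2}$. The paper handles these by cyclicity and the commutation of $\beta$ with $s_n$, writing $Tr(\beta\, s_n^{-1}s_{n-1}s_n^{-1})=Tr(\beta\, s_n^{-2}s_{n-1})$, and then expanding $s_n^{-2}$ via the cubic relation to fall back on the single-$s_n$ cases. Finally, your closing remark that the Markov parameters are ``forced'' by pitting the braid relation against the cubic relation misses the actual mechanism and the key fact: the split-link parameter $z$ (the scalar with $Tr_{n+1}\circ\phi_n = z\,Tr_n$) must equal $0$. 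The paper derives this from the semisimplicity of $A_4$: any trace is a combination of the ten matrix traces, a computer-verified invertibility of the $10\times10$ matrix of their values on a suitable family of braids determines $Tr_4$ as a polynomial function of $z$, and imposing the Markov property on $s_1^{-1}s_3^{-1}$ and $s_2^{-1}s_3^{-1}$ yields two constraints $\alpha' z(z-\alpha)=0$ and $\beta' z(z-\beta)=0$ with $\alpha\neq\beta$, forcing $z=0$. This step is not a formal consequence of the defining relations at any level, and without it the statement ``there exists only one Markov trace'' is not established.
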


We also get that there is only one Markov trace on $LG_n$. These two algebras
$A_n$ and $LG_n$ are related by $A_n \onto LG_n$ and the above theorem provides
some evidence in favor of the following conjecture.

\begin{conj} \label{conjiso} For all $n$, $A_n \simeq LG_n$. 

\end{conj}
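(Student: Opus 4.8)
The plan is to reduce Conjecture \ref{conjiso} to a single nondegeneracy statement and then attack that statement by induction along the tower. Since we already have the surjection $A_n \onto LG_n$, proving it to be an isomorphism amounts to proving it injective, and here the two Markov traces play the decisive role. By the uniqueness statement of the preceding theorem, the Markov trace $t$ on $A_n$ must coincide with the pullback of the (unique) Markov trace on $LG_n$ along $\pi\colon A_n \onto LG_n$; hence for all $a,b$ one has $t(ab) = t_{LG}(\pi(a)\pi(b))$, so the symmetric bilinear form $(a,b)\mapsto t(ab)$ on $A_n$ factors through $LG_n$. Now $LG_n = \End_{\U_q\sl(2|1)}(V^{\otimes n})$ is semisimple for generic values of the parameters, and the weights of its canonical Markov trace are the quantum dimensions of the simple summands, which are then nonzero; thus the trace form on $LG_n$ is nondegenerate. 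It follows that the radical of the form on $A_n$ is exactly $\Ker\pi$. Therefore Conjecture \ref{conjiso} is equivalent to the assertion that the Markov trace form on $A_n$ is nondegenerate, and, both algebras being finite dimensional, to the equality $\dim A_n = \dim LG_n$.

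First I would set up the induction on $n$, the cases $n\le 5$ being already settled by the preceding theorem (and by the very definition of $A_n$ for $n\le 4$). Assuming $A_{n-1}\simeq LG_{n-1}$, the aim is to compare $A_n$ with the Jones basic construction of the pair $A_{n-2}\subset A_{n-1}$. As in the BMW case, the cubic relation endows each generator $\sigma_i$ with three spectral idempotents, one of which singles out an element $e_i$ playing the role of a Jones projection; the relations $(r_2)$ and $(r_3)$ should be precisely what is needed to force the two-sided ideal generated by $e_{n-1}$ to behave like the basic-construction ideal, that is, to be isomorphic to $A_{n-1}\otimes_{A_{n-2}} A_{n-1}$. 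Since the basic construction of a pair equipped with a nondegenerate conditional expectation again carries a nondegenerate trace form, nondegeneracy would then propagate from level $n-1$ to level $n$, closing the induction.

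The main obstacle is the control of $A_n$ from above. The generic cubic Hecke algebra $H_n$ is infinite dimensional for $n\ge 6$, so the entire content lies in showing that $(r_2)$ and $(r_3)$ cut it down to exactly the dimension of $LG_n$, rather than merely to a finite dimension (the finiteness itself being already granted by the theorem). Concretely I would try to produce a normal form for words in the braid generators modulo $(r_2,r_3)$, indexed by the combinatorial objects attached to the conjectural value of $\dim LG_n$ (the non-crossing pairs of lattice paths, equivalently the associated non-crossing partitions), and prove that the straightening rules derived from the two relations terminate on a spanning set of exactly that cardinality. Matching this spanning set against the lower bound $\dim A_n \ge \dim LG_n$ furnished by the surjection would then force equality, hence nondegeneracy of the trace form.

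To close the argument, and simultaneously settle the dimension conjecture, an independent computation of $\dim LG_n$ is needed: decomposing $V^{\otimes n}$ into isotypic components over $\U_q\sl(2|1)$ gives $\dim LG_n = \sum_\lambda m_\lambda^2$, and one checks that this agrees with the formula predicted by the dimension conjecture. Since the structure constants of $A_n$ are rational functions of the deformation parameters, nondegeneracy of the trace form is a generic (Zariski-open) condition, so it would suffice to verify it at one convenient specialization where the combinatorial basis can be made explicit, for instance a classical limit in which $LG_n$ degenerates to a more tractable diagram algebra. I expect the construction of this normal form, and the proof that the candidate basis is linearly independent rather than merely spanning, to be the genuinely hard part.
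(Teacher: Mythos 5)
You are proposing a proof of a statement the paper itself leaves open: Conjecture \ref{conjiso} is stated as a conjecture, supported only by the evidence $A_n \simeq LG_n$ for $n \leq 5$, the finite-dimensionality of $A_n$, and the uniqueness of the Markov trace; there is no proof in the paper to compare against, so your text must stand on its own, and it does not. Its first decisive step already fails: you assert that the trace form on $LG_n$ is nondegenerate because ``the weights of its canonical Markov trace are the quantum dimensions of the simple summands, which are then nonzero''. In this super setting the quantum dimensions of the typical $\U_q\sl(2|1)$-modules \emph{vanish} --- this is exactly why the Links--Gould invariant is constructed from a partial trace involving $\mu$ on $n-1$ strands, why it vanishes on split links, and why theorem \ref{theounicite} forces $z=0$. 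The actual weights $a_{\chi}$ of the Markov trace on the matrix blocks are not quantum dimensions, and their nonvanishing is precisely the Question the paper leaves open (``Is this Markov trace non-degenerate on $A_n$? on $LG_n$?''), verified only at $n=4$ by the explicit computation recorded in table \ref{tablecoefs}. Your reduction of the conjecture to nondegeneracy of the trace form on $A_n$ is therefore conditional on an unproven claim presented as known; it is a genuine and interesting reformulation (essentially the one the paper hints at), but not a step forward.

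The inductive mechanism is likewise unsubstantiated at its two load-bearing points. First, the tower $(A_n)$ is not a plain Jones tower: theorem \ref{theobimoduleA} gives $A_{n+1} = A_n + A_n s_n A_n + A_n s_n^{-1} A_n + A_{n-2}\, s_{n-1}^{-1} s_n s_{n-1}^{-1}$, and the last summand --- absent in the BMW situation --- shows the bimodule structure differs from a basic-construction ideal; no analogue of the BMW relations $e_i s_{i\pm1}^{\pm1} e_i \propto e_i$ for a spectral idempotent is established here, so ``$(r_2)$ and $(r_3)$ should be precisely what is needed'' is a hope, not an argument. Second, the normal-form step you defer to is the entire content of the problem: no diagrammatic or cellular basis for $A_n$ or $LG_n$ is known, and even the closed dimension formula $\dim LG_{n+1} = (2n)!(2n+1)!/(n!(n+1)!)^2$ is itself only a conjecture checked numerically, so ``matching the spanning set against the lower bound'' presupposes what is to be proved. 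Finally, the specialization argument at the end needs a flat family in which $\dim A_n$ is constant in the parameters --- which is unknown, since $A_n$ is defined by generators and relations and its dimension could jump at special values --- so nondegeneracy at one degenerate point would not transfer without further work. In short: the proposal is a reasonable research program, but each of its three stages (nondegeneracy on $LG_n$, basic-construction structure, combinatorial normal form) is an open problem, and one of them rests on a claim that is false as stated.
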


The conjunction of these two conjectures clearly implies the next one :

\begin{conj} \label{conjdimAn} For all $n$,
$$
\dim A_{n+1} = \frac{(2n)!(2n+1)!}{(n! (n+1)!)^2}.
$$
\end{conj}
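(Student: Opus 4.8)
The plan is to derive the statement from the two preceding conjectures and then to indicate how one might establish it unconditionally. Since isomorphic algebras have the same dimension, Conjecture~\ref{conjiso} (that $A_n \simeq LG_n$) combined with the dimension formula conjectured for $LG_{n+1}$ yields $\dim A_{n+1} = \dim LG_{n+1} = \frac{(2n)!(2n+1)!}{(n!(n+1)!)^2}$ immediately. The genuine content therefore lies in one of two complementary routes: either prove the isomorphism $A_n \simeq LG_n$ together with the dimension of $LG_n$ and transport the count from the centralizer side, or compute $\dim A_n$ directly from the presentation $A_n = H_n/(r_2,r_3)$, which would settle Conjecture~\ref{conjdimAn} on its own, independently of the other two conjectures.

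I would pursue the direct computation. Since $A_n$ is already known to be finite dimensional, the first step is to produce an explicit spanning set indexed by one of the combinatorial models mentioned above --- say pairs of non-crossing lattice paths in the $(n+1)\times(n+1)$ square, or equivalently non-crossing partitions of $2n+1$ into $n+1$ blocks. I would build this set inductively along the tower $A_{n-1} \subset A_n$, writing $A_n$ as a module over $A_{n-1}$ on a set of coset representatives, exactly as one does for the Temperley--Lieb, Hecke and $BMW$ algebras. The cubic relation assigns three ``states'' to each generator $\sigma_i$, and the whole purpose of the relations $r_2$ and $r_3$ is to collapse the resulting exponential growth down to the conjectured count; verifying that these relations suffice to reduce every braid word to a combination of the proposed elements is the crux of the upper bound $\dim A_n \leq \frac{(2n-2)!(2n-1)!}{((n-1)!\,n!)^2}$.

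For the matching lower bound I would exploit the unique Markov trace on $(A_n)_{n \geq 1}$, whose existence is already established. Pairing the proposed basis elements through the trace form $x,y \mapsto \tr(xy)$ (or a Markov-type variant compatible with the tower) produces a Gram matrix whose non-degeneracy forces linear independence, hence $\dim A_n \geq$ the combinatorial count. This is the standard strategy for trace algebras arising as quotients of $K B_n$, and uniqueness of the trace strongly constrains the form this pairing can take, which should make the non-degeneracy tractable to check on small modules and then propagate up the tower.

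The main obstacle is the spanning step, which is equivalent to the injectivity of $A_n \onto LG_n$. One must show that $r_2$ and $r_3$, together with the braid and cubic relations, genuinely suffice --- that no further unforeseen collapse occurs beyond the verified range $n \leq 5$ --- and this is exactly what makes Conjecture~\ref{conjiso} difficult. I expect the decisive ingredient to be a diagrammatic or cellular structure on $A_n$, in the sense of Graham and Lehrer, compatible with the non-crossing-path indexing: such a structure would simultaneously organize the induction along the tower, control the reduction of words, and force the cell-module dimensions to sum to the claimed total. Absent such a structure, the combinatorial identification remains heuristic, and the reduction to the two prior conjectures is the most that can be asserted rigorously.
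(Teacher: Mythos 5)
The statement is a conjecture in the paper, and the only argument the paper offers for it is exactly the one in your first paragraph: the conjunction of the conjectured dimension formula for $LG_{n+1}$ and Conjecture~\ref{conjiso} ($A_n \simeq LG_n$) implies it immediately, so your reduction coincides with the paper's, and you correctly flag that this is all that can be asserted rigorously. Your unconditional program is sensible but is not carried out in the paper either; note in particular that the non-degeneracy of the trace pairing you invoke for the lower bound is precisely the question the paper leaves open (verified there only for $A_4 = LG_4$), so that step should not be presented as a routine check.
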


While this conjecture is open, we have two (conjecturally equal) algebras, and
one may use either of them, depending on our needs, to deal with
the Links-Gould polynomial.

A computational use of the new relation $r_3$
(that is, of the algebra $A_n$) is theoretically possible, as it provides
a new algorithm, which should use less memory space than the
brutal use of the R-matrix. Unfortunately, the relation $r_3$
has too many terms to be printed here, not to mention to be used by a human.
A computer implementation is in progress, though.

We also use these algebras to get new proofs of known results. For instance,
using the classical notations for the parameters of the polynomial,
the classical form of the R-matrix (recalled in section \ref{sectLG}) implies that the Links-Gould polynomial
takes values in $\Z[t_0^{\pm \frac{1}{2}},t_1^{\pm \frac{1}{2}}, \sqrt{(t_0-1)(1-t_1)}]$.
Using our results we provide another proof of the following theorem of Ishii.

\begin{theor} (Ishii, \cite{ISHIIALEX} theorem 1) The Links-Gould polynomial takes values in $\Z[t_0^{\pm 1}, t_1^{\pm 1}]$.
\end{theor}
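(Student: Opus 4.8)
The plan is to combine the a priori integrality recalled just above with the uniqueness of the Markov trace (item (iii) of the theorem of section~\ref{Markovtrace}) in a Galois-descent argument. Write $R = \Z[t_0^{\pm 1/2}, t_1^{\pm 1/2}, s]$ with $s = \sqrt{(t_0-1)(1-t_1)}$, and let $R_0 = \Z[t_0^{\pm 1}, t_1^{\pm 1}]$ be the target ring. Since $s^2, t_0, t_1 \in R_0$, the ring $R$ is integral over $R_0$, and $R_0$, being a localization of the UFD $\Z[t_0,t_1]$, is integrally closed; hence $R \cap \mathrm{Frac}(R_0) = R_0$. The field extension $\mathrm{Frac}(R)/\mathrm{Frac}(R_0)$ is Galois with group $(\Z/2)^3$, generated by the involutions $a\colon t_0^{1/2}\mapsto -t_0^{1/2}$, $b\colon t_1^{1/2}\mapsto -t_1^{1/2}$ and $c\colon s\mapsto -s$. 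It therefore suffices to prove that the Links-Gould polynomial of every link, which already lies in $R$, is fixed by $a$, $b$ and $c$: invariance under the whole group places it in $R\cap\mathrm{Frac}(R_0)=R_0$.

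I would first dispatch the off-diagonal square root, i.e. the involution $c$. The key remark is that $s$ occurs only in the off-diagonal entries of the R-matrix of section~\ref{sectLG}, whereas the coefficients of the cubic relation defining $H_n$ (being elementary symmetric functions of the three braiding eigenvalues) and the Markov parameters $z,\bar z$ lie in $\Z[t_0^{\pm 1/2}, t_1^{\pm 1/2}]$, so they are fixed by $c$. Thus $c$ fixes all the defining data of $A_n$ (equivalently of $LG_n$) and the normalization. Composing the Markov trace $\tau$ with $c$ then produces a second Markov trace $c\circ\tau$ with the same parameters and the same value on $A_1$; by uniqueness $c\circ\tau=\tau$, so every value of $\tau$, and in particular the Links-Gould polynomial, is $c$-invariant.

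For the half-integer powers, i.e. the involutions $a$ and $b$, the same mechanism applies once one absorbs the sign changes of the eigenvalues by a gauge rescaling $\sigma_i \mapsto \mu\,\sigma_i$ of the braid generators. Such a rescaling multiplies $\tau(\beta)$ by $\mu^{e(\beta)}$, where $e(\beta)$ is the exponent sum, and multiplies $z,\bar z$ accordingly; since the Links-Gould polynomial is the writhe-normalized trace, the factor $\mu^{e(\beta)}$ is exactly compensated by the framing correction, and the normalized invariant is unchanged. Choosing $\mu$ so that $a$ (respectively $b$), followed by this rescaling, fixes the cubic relation, the parameters $z,\bar z$ and the normalization, one obtains again a Markov trace equal to $\tau$ by uniqueness, hence $a$- and $b$-invariance of the invariant.

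The main obstacle I expect is the explicit bookkeeping underlying the last two steps: one must check, from the classical form of the R-matrix of section~\ref{sectLG}, that the cubic relation and both Markov parameters are genuinely free of $s$ (for $c$), and that a single scalar $\mu$ simultaneously clears the half-powers appearing in the eigenvalues and in $z,\bar z$ under $a$ and under $b$ (for the other two involutions). If no single $\mu$ clears all half-powers at once, I would fall back on a parity argument, showing that for an $L$-component link the total degree in each of $t_0^{1/2}$ and $t_1^{1/2}$ is even. Either way, once these compatibilities are established the uniqueness of the Markov trace does all the real work, turning Ishii's integrality statement into a short symmetry argument.
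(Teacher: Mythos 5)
Your proposal is correct, and it rests on the same two pillars as the paper's own proof: the a priori integrality of the invariant in $R=\Z[t_0^{\pm 1/2},t_1^{\pm 1/2},\sqrt{(t_0-1)(1-t_1)}]$ coming from the explicit R-matrix of section \ref{sectLG}, and the uniqueness of the Markov trace on the tower $(A_n)$ from section \ref{Markovtrace}; you even supply a cleaner justification (integrality of $R$ over $R_0$ plus the integral closedness of $R_0$) of what the paper dismisses as the ``elementary fact'' that $R\cap \Q(t_0,t_1)=\Z[t_0^{\pm 1},t_1^{\pm 1}]$. Where you genuinely differ is in how the descent to $\Q(t_0,t_1)$ is implemented: the paper observes directly that $A_3$, $A_4$ and the bimodule decomposition of $A_{n+1}$ over $A_n$ are defined over $\Q(t_0,t_1)$, so that the unique Markov trace is itself defined over that field; you instead twist the trace by the three involutions of the multiquadratic extension $\mathrm{Frac}(R)/\Q(t_0,t_1)$ and invoke uniqueness to force invariance. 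These two mechanisms are equivalent (Galois invariance of a unique object is exactly definability over the fixed field), and your version goes through, but two points of bookkeeping deserve attention. First, ``$c$ fixes all the defining data of $A_n$'' requires not only the cubic $r_1$ but also Ishii's relation $r_2$ and the relation $r_3$ to have coefficients in the fixed field; this holds because both are rational in the eigenvalues $(a,b,c)$, which for the unrescaled R-matrix are $(-1,t_0,t_1)$ --- free of half-powers and of $\sqrt{(t_0-1)(1-t_1)}$ --- and this is precisely the paper's assertion that $A_3$ and $A_4$ are defined over $\Q(t_0,t_1)$. Second, and for the same reason, the obstacle you flag for the involutions $a$ and $b$ dissolves: since the eigenvalues, the relations, and the Markov data ($z=0$ and $Tr_{n+1}(\beta s_n^{\pm 1})=Tr_n(\beta)$) contain no half-integer powers, no gauge rescaling $\mu$ is needed and your fallback parity argument is superfluous; one only has to note that $\sigma\circ\tau$ is $\sigma$-semilinear, so the uniqueness argument should be applied to $\sigma\circ\tau\circ\sigma^{-1}$, evaluated on braid words, which lie in the $\Q(t_0,t_1)$-form of $A_n$.
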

\begin{proof} The algebras $A_3$ and $A_4$ are defined and are split semisimple over $\Q(t_0,t_1)$. Moreover, the decomposition of $A_{n+1}$ as a $A_n$-module described in section \ref{Markovtrace}
 is valid over the field $\Q(t_0,t_1)$, and the
unicity of the Markov trace proves that it is also defined over $\Q(t_0,t_1)$. It follows that the Links-Gould invariant
takes value in $\Q(t_0,t_1)$. The conclusion follows from the elementary fact that
$\Z[t_0^{\pm \frac{1}{2}},t_1^{\pm \frac{1}{2}}, \sqrt{(t_0-1)(1-t_1)}] \cap \Q(t_0,t_1) = \Z[t_0^{\pm 1},t_1^{\pm 1}]$.
\end{proof}

Finally, we investigate the image and kernel of the morphism $B_n \to LG_n^{\times}$,
and get the following (see section \ref{sectimagenoyau}).

\begin{theor} \label{theoimB} The morphism $B_n \to LG_n^{\times}$ is injective. If $LG_n = \bigoplus_{1 \leq k \leq N} Mat_{c(k)}(K)$
is the decomposition of $LG_n$ in a sum of matrix algebras, the Zariski closure of $B_n$
inside $LG_n^{\times}$ contains $\bigoplus_{1 \leq k \leq N} SL_{c(k)}(K)$.
\end{theor}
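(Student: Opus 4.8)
The plan is to prove the two assertions separately, using throughout the surjectivity of $K B_n \to LG_n$. Since $LG_n$ is split semisimple, that surjectivity says exactly that the simple $LG_n$-modules $V_1,\dots,V_N$, with $\dim V_k = c(k)$, are \emph{pairwise non-isomorphic irreducible} $B_n$-modules, and that $\rho=\bigoplus_k V_k$ is a faithful representation of the \emph{algebra} $LG_n$; equivalently, $\rho$ identifies $LG_n$ with $\bigoplus_k Mat_{c(k)}(K)$, and the total representation is semisimple.

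For the injectivity of $B_n\to LG_n^{\times}$, I would follow the route suggested by the $BMW_n$ analogy recalled in the introduction and look for a single \emph{faithful} $B_n$-module among the components $V_k$. The natural candidate is the generic Lawrence--Krammer--Bigelow representation: it is realisable inside a quantum tensor power, hence factors through a centralizer algebra of the present type, and I would pin it down as one of the $V_k$ by matching its dimension together with the eigenvalues of $\sigma_1$ prescribed by the cubic relation. Faithfulness of $B_n\to LG_n^{\times}$ would then follow at once from the Bigelow--Krammer theorem; note that only one faithful summand is needed, so the behaviour of the remaining components is irrelevant here.

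For the statement on the Zariski closure, write $G$ for the Zariski closure of $\rho(B_n)$ in $\prod_k GL_{c(k)}(K)$ and $\g=\mathrm{Lie}(G)\subseteq\bigoplus_k\gl_{c(k)}$; the goal is $\g\supseteq\bigoplus_k\sl(V_k)$. First I would record that, each $V_k$ being already $B_n$-irreducible, the projection $\g_k$ of $\g$ onto $\gl_{c(k)}$ acts irreducibly and is therefore reductive, so that $\g$ is reductive, $[\g,\g]$ is semisimple, and $[\g_k,\g_k]$ acts irreducibly on $V_k$. The crucial local step is then $[\g_k,\g_k]=\sl(V_k)$. Its proof rests on the fact that $\g_k$ is generated, as a Lie algebra, by the conjugates (under the image of $B_n$) of the toral subalgebra singled out by the semisimple element $\sigma_1$, which acts on $V_k$ with at most the three eigenvalues permitted by the cubic relation. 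The classification of simple Lie algebras possessing an irreducible representation generated by elements with so few eigenvalues leaves only $\sl(V_k)$, $\sp(V_k)$ and $\so(V_k)$, together with finitely many exceptional configurations that I would discard by induction on $n$ through the inclusion $LG_{n-1}\subset LG_n$. The symplectic and orthogonal cases occur precisely when $V_k$ carries a non-degenerate invariant bilinear form, i.e. when $V_k\cong V_k^{*}$ as a $B_n$-module; I would rule this out by the very feature that separates Links--Gould from Kauffman, namely that the defining $\U_q\sl(2|1)$-module $V$ is not self-dual, so that none of the components $V_k$ is self-dual. This forces $[\g_k,\g_k]=\sl(V_k)$ for every $k$.

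Finally I would globalise, passing from ``$[\g_k,\g_k]=\sl(V_k)$ for each $k$'' to ``$\g\supseteq\bigoplus_k\sl(V_k)$''. Applying Goursat's lemma to the semisimple algebra $[\g,\g]$, two simple factors $\sl(V_k)$ and $\sl(V_j)$ can fail to sit independently only if they are diagonally glued through a Lie isomorphism compatible with the images of the $\sigma_i$, which forces $V_j\cong V_k$ or $V_j\cong V_k^{*}$ as $B_n$-modules. The first is excluded because the $V_k$ are pairwise non-isomorphic, and the second again by the absence of self-duality of $V$ (which places the duals $V_k^{*}$ among the components of a different tensor power, so that no component is dual to another). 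Hence all the factors are independent, $[\g,\g]=\bigoplus_k\sl(V_k)$, and therefore $G\supseteq\bigoplus_k SL_{c(k)}(K)$. The main obstacle is the local step: proving that each $[\g_k,\g_k]$ is the full $\sl(V_k)$ rather than a proper irreducible subalgebra; the decisive ingredients there are the three-eigenvalue constraint on $\sigma_1$ and the failure of self-duality of the components, the latter being exactly what replaces the symplectic/orthogonal answer of the $BMW_n$ case by a special linear one.
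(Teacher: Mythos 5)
Your overall architecture coincides with the paper's: injectivity via the Krammer representation factoring through $LG_n$ plus the Bigelow--Krammer theorem, and the closure statement via irreducibility of each component, the local identity $[\g_k,\g_k]=\sl(V_k)$, and a Goursat-type exclusion of diagonal gluings. However, two of your key steps have genuine gaps. For injectivity, matching ``dimension and the eigenvalues of $s_1$'' cannot identify the Krammer representation among the $V_k$: these data do not determine a $B_n$-module, and your premise that LKB ``factors through a centralizer algebra of the present type'' is exactly what must be proved --- note that the paper shows $BMW_4$ is \emph{not} a quotient of $LG_4$, so the BMW realization of LKB cannot be imported. The paper instead characterizes $Kr_n$ abstractly by its branching rule ($\Res Kr_{n+1} = S_n + U_n + Kr_n$, with $Kr_2 : s_1 \mapsto c$), proves that any family with this Bratteli diagram factors through the BMW algebra (because its restriction to $B_3$ does, and BMW's defining relations live on $3$ strands) and must be the $[n-2]$-component, and then verifies that $V_{[0,n-2]_n}$ satisfies the branching hypothesis.

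For the closure, your proposal misses the issue the paper's spectral computations are designed to handle. First, ``$V$ is not self-dual, hence no $V_k$ is self-dual'' is a non sequitur; the paper proves non-(self-)duality of the components as a separate proposition in section \ref{sectimagenoyau}, by comparing eigenvalue sums ($X_+^{=}$ versus $X_+^{\neq}$) for generic $\alpha$. Second, and more seriously, in your Goursat step a gluing between $\sl(V_k)$ and $\sl(V_j)$ only forces an isomorphism \emph{up to a scalar twist}: the $SL$-factors are blind to the center, and $B_n$ has infinitely many one-dimensional characters, so $V_j \simeq V_k \otimes \chi$ or $V_j \simeq V_k^* \otimes \chi$ cannot be excluded by pairwise non-isomorphism alone. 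This is precisely why the paper works infinitesimally with the derived algebra $\mathcal{LG}'_n(\alpha)$ (via Geer's Kohno--Drinfeld theorem), where the twist ambiguity becomes a shift of the $\Omega_{ij}$-eigenvalues and is killed by the genericity of $X - X = \{0\} \sqcup (-2)\{2\alpha+1, \alpha+1, \alpha, -\alpha, -\alpha-1, -2\alpha-1\}$. Finally, the ``classification of simple Lie algebras with an irreducible representation generated by elements with three eigenvalues'' that your local step invokes is not an available result; the paper's proof of theorem \ref{theoimlie} proceeds instead by induction on $n$ using the multiplicity-free branching, the rank criterion ($\rk \h > (\dim V_\la)/2$ forces $\g = \sl(V_\la)$, lemma 3.1 of [IH2]), and a case-by-case treatment of the small modules where non-self-duality rules out $\so$ and $\sp$. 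Your conclusions are correct, but each of these steps requires the paper's concrete arguments rather than the cited heuristics.
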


Finally, the question of whether these conjecturally equal algebras are \emph{the} `right' algebras, in the sense
of being the minimal ones, for the
Links-Gould polynomial, is equivalent to the following one, that we leave open for the
time being. 
\begin{question} Is this Markov trace non-degenerate on $A_n$ ? on $LG_n$ ?
\end{question}

We prove that it is non-degenerate on $A_4 = LG_4$, as it is a linear combination
with non-zero coefficients of matrix traces (see table \ref{tablecoefs}), thus providing some
evidence to a `yes' answer to this question. A complete answer of this question for $LG_n$ should be given
by the complete determination of the similar coefficients for arbitrary $n$, which is an interesting task in itself.

\begin{figure}
\begin{center}
\resizebox{!}{7cm}{\includegraphics{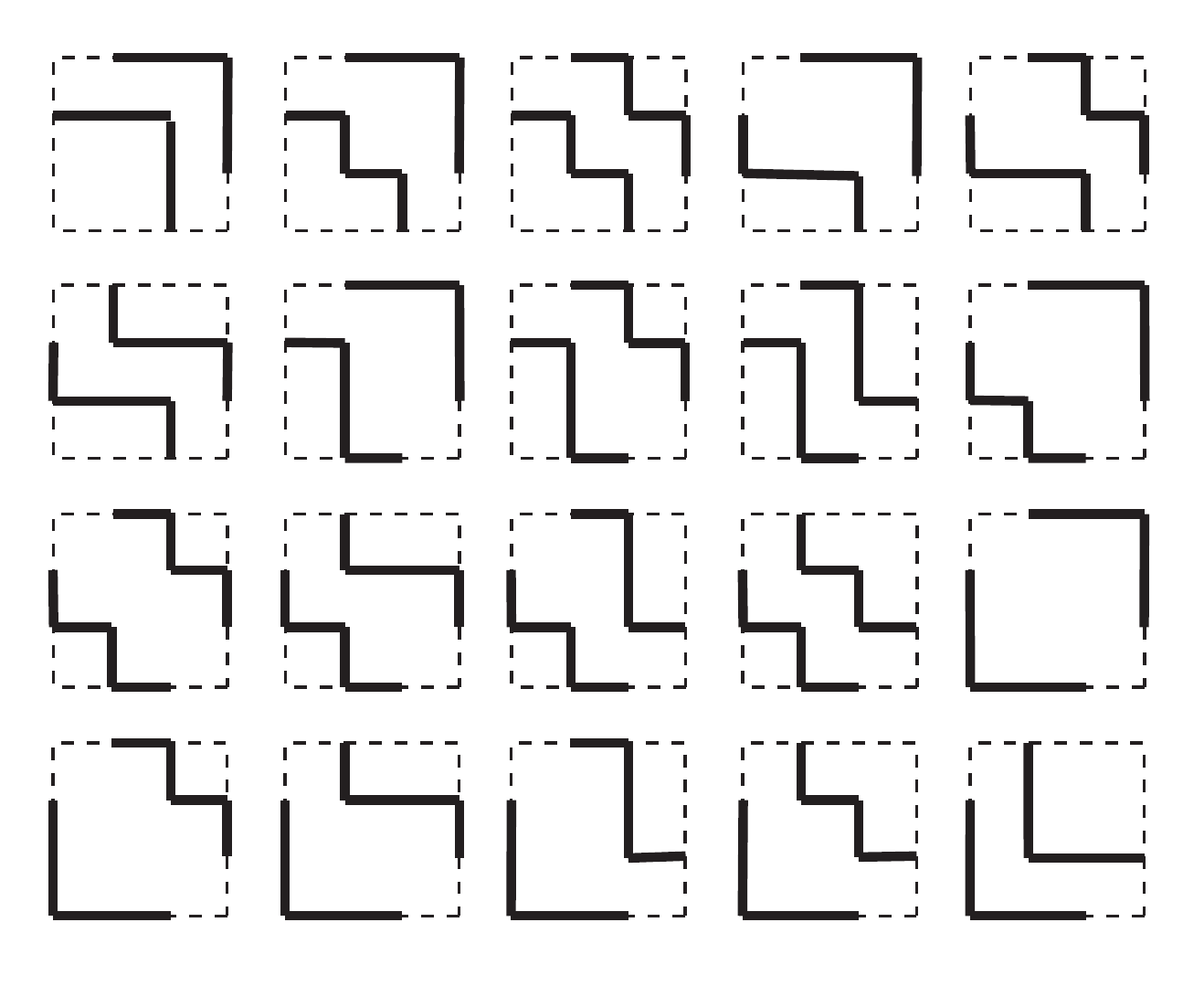}}
\end{center}
\caption{Non-crossing paths as a potential combinatorial model for $LG_3$.}
\label{figchemsLG3}
\end{figure}

{\bf Acknowledgements.} I.M. thanks C. Gruson and P. Vogel for useful discussions on Lie superalgebras. E.W. thanks N. Geer for fruitful email correspondence on the
Links-Gould invariant. E.W. has been partially supported by a FABER
Grant X110CVHCP-2011 and by the French ANR project
ANR-11-JS01-002-01. I.M. has benefited from the ANR grant ANR-09-JCJC-0102-01, corresponding to the ANR project `RepRed'.

\section{Quantum definitions and properties}


\subsection{Lie superalgebras}

We recall the basic notions about Lie superalgebras. From
now on the vector spaces under consideration are defined over
a field $\kk$ of characteristic $0$. For simplicity we moreover
assume $\kk \subset \C$. A \emph{superspace} $V$
is a $\Z/2\Z$-graded space $V = V_0 \oplus V_1$. 
An homogeneous
element in $V$ is called even if it belongs to $V_0$, odd
if it belongs to $V_1$. We denote $|a| \in \{ 0, 1 \}$ the degree
of an homogeneous element.
 A \emph{superalgebra}
$A$ is an associative unital $\Z/2\Z$-graded algebra (that is
$A = A_0 \oplus A_1$ with $A_i A_j \subset A_{i+j}$).
The tensor superproduct $A \otimes_s B$ of two superalgebras $A,B$
is defined as the vector space $A \otimes B$ endowed with
the natural $(\Z/2\Z)$-graduation and by the
multiplication $(a \otimes b)(c \otimes d) = (-1)^{|b||c|}(ac \otimes bd)$
for homogeneous $b,c$. It is straightforward to check
that $((A \otimes_s B) \otimes_s C)$ is naturally isomorphic
to $(A \otimes_s(B \otimes_s C))$, hence the superproduct
$A^{(1)} \otimes_s A^{(2)} \otimes_s \dots \otimes_s A^{(n)}$ of an ordered
(finite) family of superalgebras $A^{(1)},\dots,A^{(n)}$ is well-defined.

For a superspace $V = V_0 \oplus V_1$,
the algebra $\End(V)$ has a natural superalgebra structure, with
$\End(V)_0 = \End(V_0) \oplus \End(V_1)$ and
$\End(V)_1 = \Hom(V_0,V_1) \oplus \Hom(V_1,V_0)$. For
$V, W$ two superspaces, the tensor product $V \otimes W$ has
a natural superspace structure $(V \otimes W)_0 = (V_0 \otimes W_0) \oplus (V_1
\otimes W_1)$, $(V \otimes W)_1 = (V_0 \otimes W_1)\oplus ( V_1
\otimes W_0)$. Assuming that $V$ and $W$ are finite dimensional, 
there is an isomorphism $\End(V) \otimes_s \End(W) \simeq
\End(V \otimes W)$. It associates to $a \otimes b \in \End(V) \otimes_s \End(W)$,
with homogeneous $a,b$, the endomorphism of $V \otimes W$
that maps $v \otimes w$ to $(-1)^{|b||v|} (av) \otimes (bw)$
for homogeneous $v,w$.
It can be extended to an isomorphism of superalgras $\End(V)^{\otimes_s n} = 
\End(V)\otimes_s\End(V) \otimes_s \dots \otimes_s \End(V)$
with $\End(V \otimes \dots \otimes V) = \End(V^{\otimes n})$.

For the axiomatic definition of Lie superalgebras we refer to \cite{KACSUPER},
and recall the lazy definition as a graded subspace of
some associative superalgebra stable under the
superbracket defined by $[a,b] = ab - (-1)^{|a||b|} ba$ for
homogeneous elements $a,b$. To each Lie superalgebra
$\g$ is associated its universal envelopping algebra $\U \g$. It is
a superalgebra with a (super)coproduct $\Delta : \U \g \to
\U \g \otimes_s \U \g$ defined by $\Delta(a) = a \otimes 1 +
1 \otimes a$ for $a \in \g$ (the definition of the `diagonal homomorphism'
of \cite{KACSUPER} is well-known to be flawed). We let
$\Delta_n : \U \g \to (\U \g)^{\otimes_s n}$ denote
the iterated coproduct.


\subsection{Casimir operators}
\label{ssdefcasimir}

Let $\g$ be a finite-dimensional Lie superalgebra, with
a non-degenerate bilinear form $<\ , \ >$ which is
invariant, that is $<[a,b],c> = <a,[b,c]>$), supersymmetric,
that is $<b,a> = (-1)^{|a||b|} <a,b>$ for homogeneous $a,b$,
and consistent, that is $<\g_0,\g_1> = 0$. Let $e_1,\dots,e_n$
be an homogeneous basis of $\g$, and denote $|i| = |e_i|$.
The dual basis $(\tilde{e}_i)$ is defined by
$< \tilde{e}_i,e_j> = \delta_{ij}$ (Kronecker symbol),
and $|\tilde{e}_i| = |e_i| = |i|$ by consistency. By supersymmetry,
$\stackrel{\approx}{e}_i = (-1)^{|i|} e_i$. Let $\Omega = \sum_i
e_i \otimes \tilde{e}_i  \in \g \otimes \g$. It is independent
of the choice of the basis, as it is the image of $<\ , \ >$
under the isomorphism of vector spaces
$(\g \otimes \g)^* \simeq \g^* \otimes \g^* \simeq  \g \otimes \g$
induced by $<\ , \ >$. In particular,
$\Omega = \sum_i \tilde{e}_i \otimes \stackrel{\approx}{e}_i 
= \sum_i (-1)^{|i|} \tilde{e}_i \otimes e_i$.

The Casimir operator
is the element $C = \sum_i e_i \tilde{e}_i = \sum_i (-1)^{|i|} \tilde{e}_i e_i \in \U \g$. It does not
depend on the choice of the basis either. Note that
$C \in \U \g$ is even.

Let $n \geq 2$. For $i<j \leq n$, we let
$$
\Omega_{ij} = \sum_r 1 \otimes \dots \otimes e_r \otimes 1 \otimes \dots \otimes 1 \otimes \tilde{e}_r \otimes 1
\otimes \dots \otimes 1 \in \U \g ^{\otimes_s n}
$$ 
where the $e_r$'s are in position $i$ and the $\tilde{e}_r$'s are
in position $j$. The following lemma is standard in
the classical (that is, Lie algebra) case. Its extension to the
`super' case is straightforward.

\begin{lemma} \label{lemCT}
$$
2(\Omega_{1,n} + \Omega_{2,n} + \dots + \Omega_{n-1,n}) = \Delta_n(C) - \Delta_{n-1}(C) \otimes 1 - 1 \otimes \dots \otimes 1 \otimes C
$$
\end{lemma}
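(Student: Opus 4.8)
The plan is to compute $\Delta_n(C)$ explicitly and then read off the statement by subtraction. For a homogeneous $x$ and a fixed slot $k$, write $x^{[k]} = 1 \otimes \dots \otimes x \otimes \dots \otimes 1 \in (\U\g)^{\otimes_s n}$ for the element with $x$ in the $k$-th factor and $1$ elsewhere. Since each $e_i, \tilde{e}_i$ is primitive, the iterated coproduct gives $\Delta_n(e_i) = \sum_{k=1}^n e_i^{[k]}$ and likewise for $\tilde{e}_i$; and since $\Delta_n$ is a morphism of superalgebras into the super tensor product and $C = \sum_i e_i \tilde{e}_i$, multiplicativity yields
$$
\Delta_n(C) = \sum_i \Delta_n(e_i)\,\Delta_n(\tilde{e}_i) = \sum_i \sum_{k,l=1}^n e_i^{[k]}\, \tilde{e}_i^{[l]}.
$$
The core of the argument is to evaluate each product $e_i^{[k]}\tilde{e}_i^{[l]}$ in the super tensor product, keeping track of the Koszul sign: multiplying two decomposable tensors introduces the sign $(-1)^{\sum_{p>q}|a_p||b_q|}$, and here the only possibly-odd factors are $e_i$ in slot $k$ and $\tilde{e}_i$ in slot $l$, both of degree $|i|$, so the sign is $(-1)^{|i|}$ exactly when $k>l$ and $+1$ otherwise.

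I would then split the double sum into three parts. The diagonal $k=l$ contributes $\sum_k \big(\sum_i e_i\tilde{e}_i\big)^{[k]} = \sum_k C^{[k]}$. For $k<l$ the sign is $+1$, and summing over $i$ reproduces, by its very definition, $\Omega_{kl}$. For $k>l$ the sign is $(-1)^{|i|}$, and here I would invoke the identity $\Omega = \sum_i (-1)^{|i|}\tilde{e}_i \otimes e_i$ recorded just above the lemma to recognise $\sum_i (-1)^{|i|}(\tilde{e}_i \text{ in slot } l)(e_i \text{ in slot } k)$ as $\Omega_{lk}$. Collecting the three contributions gives the master formula
$$
\Delta_n(C) = \sum_{k=1}^n C^{[k]} + 2\sum_{1\le k<l\le n}\Omega_{kl}.
$$

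Finally, the lemma is a matter of subtraction. Applying the master formula at rank $n-1$ and tensoring by $1$ on the right identifies $\Delta_{n-1}(C)\otimes 1$ with $\sum_{k=1}^{n-1}C^{[k]} + 2\sum_{1\le k<l\le n-1}\Omega_{kl}$ inside $(\U\g)^{\otimes_s n}$, while $1\otimes \dots \otimes 1\otimes C = C^{[n]}$. Subtracting these two expressions from $\Delta_n(C)$ cancels all the diagonal terms $C^{[k]}$ and all the $\Omega_{kl}$ with $l\le n-1$, leaving exactly $2\sum_{k=1}^{n-1}\Omega_{kn} = 2(\Omega_{1,n}+\dots+\Omega_{n-1,n})$, which is the claim. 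The one delicate point — the only place where the `super' case genuinely differs from the classical computation — is the sign bookkeeping in the $k>l$ term, where the Koszul sign $(-1)^{|i|}$ has to be matched against the two equivalent expressions for $\Omega$; once that is pinned down, everything else is formal. A purely inductive alternative is available too, using coassociativity $\Delta_n = (\Delta_{n-1}\otimes \mathrm{id})\circ\Delta$ together with the base identity $\Delta(C) = C\otimes 1 + 1\otimes C + 2\Omega$, but I find the direct expansion cleaner and less error-prone.
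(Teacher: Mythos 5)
Your proof is correct and takes essentially the same route as the paper's: both expand $\Delta_n(C) = \sum_r \Delta_n(e_r)\Delta_n(\tilde{e}_r)$ slot by slot, use the Koszul sign $(-1)^{|r|}$ together with the identity $\Omega = \sum_r (-1)^{|r|}\,\tilde{e}_r \otimes e_r$ to recognise the reversed-order terms, and conclude by subtracting $\Delta_{n-1}(C)\otimes 1$ and the last-slot Casimir. Your intermediate master formula $\Delta_n(C) = \sum_k C^{[k]} + 2\sum_{k<l}\Omega_{kl}$ is merely a symmetric repackaging of the same expansion the paper performs en route.
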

\begin{proof} It is easily checked that $\sum_{i<n} \Omega_{i,n}=
\sum_r \Delta_{n-1}(e_r) \otimes \tilde{e}_r$. On the other
hand, $\Delta_n(C) = \sum_r \Delta_n(e_r) \Delta_n(\tilde{e}_r)$ is
$$
\sum_r \left( \sum_i 1 \otimes \dots \otimes \underbrace{e_r}_{(i)} \otimes \dots \otimes 1 \right)
\left( \sum_j 1 \otimes \dots \otimes \underbrace{\tilde{e}_r}_{(j)} \otimes \dots \otimes 1 \right)
$$
that is the sum over $r$ of
$$
\sum_{i<j} 1 \otimes \dots\otimes  \underbrace{e_r}_{(i)}
\otimes \dots \otimes \underbrace{\tilde{e}_r}_{(j)} \otimes \dots \otimes 1
+ \sum_{j<i} (-1)^{|e_r||\tilde{e}_r|}1 \otimes \dots\otimes  \underbrace{\tilde{e}_r}_{(j)}
\otimes \dots \otimes \underbrace{e_r}_{(i)} \otimes \dots \otimes 1
$$
and of
$$
\sum_i 1 \otimes \dots \otimes \underbrace{e_r \tilde{e}_r}_{(i)} \otimes \dots \otimes 1 
$$
It follows that
$$
\Delta_n(C) - \Delta_{n-1}(C) \otimes 1 = 
1^{\otimes(n-1)} \otimes C + \sum_r \Delta_{n-1}(e_r) \otimes \tilde{e}_{r}
+ \sum_r \Delta_{n-1}(\tilde{e}_r) \otimes \stackrel{\approx}{e}_{r}
$$
since $|e_r||\tilde{e}_r| = |r|^2 = |r|$ and $\stackrel{\approx}{e}_{r} =
(-1)^{|r|} e_r$, this proves the lemma.
\end{proof}

We recall that the $\Omega_{ij}$ satisfy the infinitesimal braids or 4T-relations,
namely the 1-form
$$
\sum_{i<j} \Omega_{i,j} \mathrm{d log}(z_i - z_j)
$$
is integrable on $\C_*^n = \{ (z_1,\dots,z_n) \in \C^n \ | \ z_i \neq  z_j \}$.
There is a Kohno-Drinfeld type theorem due to N. Geer (see \cite{GEER})
saying that the representations of the braid group obtained in this
way are isomorphic to the ones originating from the Yamane quantization
of the corresponding Lie superalgebra.

\subsection{The Lie superalgebras $\sl(2|1)$}
When $V$
is a superspace with $V_0 = \kk^m$ and $V_1 = \kk^n$ the
Lie superalgebra associated to $\End(V)$ is traditionnaly denoted
$\gl(m|n)$. It admits a linear form called the
supertrace $str : \gl(m|n) \to \kk$ defined by $str(\End(V)_1) = 0$
and $str_{| \End(V)_0} = \Id_{\End(V_0)} - \Id_{\End(V_1)}$.
Choosing the natural homogeneous basis of $V = \kk^m \oplus \kk^n$
the elements of $\End(V)$ are represented by matrices $\left(\begin{array}{cc}
\alpha & \beta \\ \gamma & \delta
\end{array} \right)$, with supertrace $\tr \alpha - \tr \delta$.
The Lie superalgebra structure of $\gl(m|n)$ restricts to a Lie
superalgebra structure on $\Ker str$, denoted $\sl(m|n)$. We assume
$m \neq n$. Then $<a,b> = str(ab)$ defines on $\sl(m|n)$
a bilinear form fulfilling the requirements of section \ref{ssdefcasimir}.

We now specialize to the case $m = 2$, $n=1$, and choose for basis
of $\sl(2|1)$ the following elements
$$
h_1 = \left( \begin{array}{cc|c} 1 & 0 & 0 \\ 0 & -1 & 0 \\ \hline 0 & 0 & 0\end{array} \right) 
h_2 = \left( \begin{array}{cc|c} 0 & 0 & 0 \\ 0 & 1 & 0 \\ \hline 0 & 0 & 1\end{array} \right) 
e_1 = \left( \begin{array}{cc|c} 0 & 1 & 0 \\ 0 & 0 & 0 \\ \hline 0 & 0 & 0\end{array} \right) 
f_1 = \left( \begin{array}{cc|c} 0 & 0 & 0 \\ 1 & 0 & 0 \\ \hline 0 & 0 & 0\end{array} \right) 
$$
{\ }
$$
e_2 = \left( \begin{array}{cc|c} 0 & 0 & 0 \\ 0 & 0 & 1 \\ \hline 0 & 0 & 0\end{array} \right) 
f_2 = \left( \begin{array}{cc|c} 0 & 0 & 0 \\ 0 & 0 & 0 \\ \hline 0 & 1 & 0\end{array} \right) 
[e_1,e_2] = \left( \begin{array}{cc|c} 0 & 0 & 1 \\ 0 & 0 & 0 \\ \hline 0 & 0 & 0\end{array} \right) 
[f_1,f_2] = \left( \begin{array}{cc|c} 0 & 0 & 0 \\ 0 & 0 & 0 \\ \hline -1 & 0 & 0\end{array} \right) 
$$
The elements $h_1,h_2,f_1,f_2$ are even, the other four are odd.
We have $[h_i,h_j] = 0$, $[h_i,e_j] = a_{ij} e_j$, $[h_i,f_j] = -a_{ij} f_j$,
$[e_i,f_j] = \delta_{ij} h_i$, $[e_2,e_2]=[f_2,f_2] = 0$,
$[e_1,[e_1,e_2]] = [f_1,[f_1,f_2]] = 0$ with $(a_{ij})_{i,j}$ the Cartan
matrix $\left( \begin{array}{cc} 2 & -1 \\ -1 & 0 \end{array} \right)$,
so our notations are compatible with \cite{PATGEER}.

These 8 elements are weight vectors under the adjoint representation
for the Cartan subalgebra $\h$ spanned by $h_1,h_2$. Clearly $\h$
is a subspace of the space $\mathfrak{d}$ of the diagonal matrices
in $\gl(2|1)$. Following \cite{KACREPS} we let $\eps_1^K,\eps_2^K, \delta_1^K \in \mathfrak{d}^*$
denote the dual basis of the diagonal vectors $E_{11},E_{22},E_{33}$.
Under the natural restriction map $\mathfrak{d}^* \to \h^*$ one has
$\eps_1^K \mapsto h_1^*$, $\eps_2^K \mapsto h_2^* - h_1^*$, $\delta_1^K \mapsto h_2^*$,
where $(h_1^*,h_2^*)$ denotes the dual basis of $(h_1,h_2)$.
Recall that a root $\alpha \in \h^*$ is called even (odd) if the corresponding eigenspace
is even (odd). The only positive even root is thus $\alpha_1 = \eps_1^K - \eps_2^K = 2 h_1^* - h_2^*$,
the odd ones are  $\alpha_2 = \eps_2^K - \delta_1^K = - h_1^*$,
$\alpha_1 + \alpha_2 = \eps_1^K- \delta_1^K = h_1^* - h_2^*$. It follows
that the `super half-sum' of the positive roots is $\rho = \frac{1}{2} ( \alpha_1 - (\alpha_2 + \alpha_1 + \alpha_2)
= - \alpha_2 = h_1^*$.

\subsection{Highest weight modules for $\sl(2 | 1)$}

To each $\la \in \h^*$ one can associate a Kac module $V(\la)$ for $\sl(2|1)$, which
is an irreducible heighest weight module, if $< \la + \rho, \alpha > \neq 0$ for odd root $\alpha$ (see \cite{KACREPS}). Such
a weight is called \emph{typical}.
The Cartan matrix is the matrix in the basis $(h_1,h_2)$ of the Killing form $a,b \mapsto <a,b> = str(ab)$
to the Cartan subalgebra $\h$. We have $h_1^* = < -h_2,\cdot >$ and $h_2^* = <-h_1 - 2h_2, \cdot >$.
It follows that the induced bilinear form $<\ , \ >$ on $\h^*$ satisfies
$<h_1^*,h_1^*> = 0$, $<h_1^*,h_2^*> = -1$, $<h_2^*,h_2^*> = -2$. In particular,
for $\la = a_1 h_1^* + a_2 h_2^*$ one computes $< \la + \rho, -h_1^*> = a_2$
and $< \la + \rho, h_1^* - h_2^* > = a_1 + a_2 + 1$. Thus $\la$ is typical if $a_2 \neq 0$
and $a_1 + a_2 + 1 \neq 0$.

In that case, the (usual) dimension of $V(\la)$ is $4 < \la  - \alpha_2,\alpha_1> = 4(a_1+1)$,
and its odd and even parts have the same dimension $2(a_1+1)$.

Finally, the value of $C$ on $V(\la)$ for a typical $\la$ is
$<\la,\la + 2 \rho>$. We find $C_{|V(\la)} = -2 a_2(a_1+a_2+1)$.

Letting $V(a_1,a_2)$ denote the highest weight module with weight $a_1 h_1^* + a_2 h_2^*$, which is typical
for $a_2 \neq 0$ and $a_1 + a_2 + 1 \neq 0$. If $a_1 \neq 0$, then $V(a_1,a_2) \otimes V(0,b_2)$ is isomorphic
to
$$
V(a_1 ,a_2+b_2) \oplus V(a_1+1,a_2+b_2) \oplus V(a_1-1,a_2+b_2+1) \oplus V(a_1,a_2+b_2+1)
$$
whenever all the weights involved are typical. When $a_1 = 0$, and under the same conditions, it
is isomorphic to $V(a_1 ,a_2+b_2) \oplus V(a_1+1,a_2+b_2) \oplus  V(a_1,a_2+b_2+1)$ (see \cite{PATGEER} lemma 1.3  or \cite{GRUSON} proposition 4.1).

\subsection{The bimodules $V(0,\alpha)^{\otimes 2}$} \label{sectVV}

According to the Clebsh-Gordan decomposition above,
for generic values of $\alpha$ we have
$V(0,\alpha)\otimes V(0,\alpha) = V(0,2 \alpha) \oplus V(0,2 \alpha+1) \oplus
V(1,2 \alpha)$,
with $C$ taking values $-4 \alpha(2 \alpha+1)$, $-4(\alpha+1)(2 \alpha+1)$,
$-8 \alpha(\alpha+1)$ and the supersymmetrizer $\tau$ the values $1,1,-1$
(see \cite{LINKSGOULD}). Since, on $V(0, \alpha)$,
$C$ takes value $-2\alpha(\alpha+1)$, then $\Omega_{12}$
takes the values $-2 \alpha^2$, $-2(\alpha+1)^2$, $-2\alpha (\alpha+1)$.

\section{The Links-Gould invariant of links}
\label{sectLG}
In this section, we give a definition of the Links-Gould invariant \cite{LINKSGOULD} of links. It is an 2-variable invariant constructed by using the Reshetikhin-Turaev recipe \cite{ReshTur} applied to a one-parameter family of representations of the quantum supergroup $\mathsf{U}_q\sl(2,1)$ -- or $\mathsf{U}_q \gl(2|1)$, which
does not make any difference, see e.g. the introduction of \cite{ISHIIDEWIT}. It is naturally defined to be an invariant of (1-1)-tangle \cite{DeWitKauffLinks} and turn's out to be in fact an invariant of links (see Remark \cite{PATGEER}). For convenience we define following closely  Ishii \cite{Ishiisplit} the Links-Gould invariant as a partial trace on the braid group.\\

Recall that it follows from Alexander theorem \cite{Alexander} that any link can be presented as the closure of a braid. In addition Markov's theorem \cite{Markov} precises exactly when two braids represent the same link. Hence one can always define a link invariant on a braid closure representative, as long as it is invariant under the Markov moves (see also Section \ref{Markovtrace}).\\

Let $K$ be a field of characteristic $0$ containing two algebraically
independent elements $t_0,t_1 \in K^{\times}$, as well as the
square roots $\sqrt{t_0}$, $\sqrt{t_1}$ and $\sqrt{ (t_0-1)(1-t_1)}$.
Consider a four-dimensional vector space $V$ over $K$ with basis $(e_1,e_2,e_3,e_4)$. Recall that a {\it R-matrix}  $R$ is an invertible element of  $\rm{End}(V\otimes V)$ that satisfies the Yang-Baxter equation:
$$(\mbox{Id}_V\otimes R)(R\otimes \mbox{Id}_V)(\mbox{Id}_V\otimes R)=(R\otimes \mbox{Id}_V)(\mbox{Id}_V\otimes R)(R\otimes \mbox{Id}_V).$$
An endomorphism $A$ of $V\otimes V$ will be represented by a matrix $(A_{i,j})_{i,j\in[|1,16|]}$ where 
$$A_{4(i-1)+j,4(k-1)+l}=A_{k,l}^{i,j} \mbox{ with } A(e_k\otimes e_l)=\sum_{i,j=1}^4 A_{k,l}^{i,j}e_i\otimes e_j\mbox{ for all }k,l=1,\ldots,4.$$
The endomorphism $R\in\mbox{End}(V\otimes V)$ given by the following matrix $(R_{i,j})_{i,j\in[|1,16|]}$ \cite{DeWitKauffLinks}  is a R-matrix :
\begin{center}
\scalebox{0.8}{$\left(\begin{array}{cccc|cccc|cccc|cccc}
t_0 & \cdot & \cdot & \cdot & \cdot & \cdot & \cdot & \cdot & \cdot & \cdot & \cdot & \cdot & \cdot & \cdot & \cdot & \cdot \\
\cdot & \cdot & \cdot & \cdot  & t_0^{1/2} & \cdot & \cdot & \cdot & \cdot & \cdot & \cdot & \cdot & \cdot & \cdot & \cdot \\
 \cdot & \cdot & \cdot & \cdot & \cdot & \cdot & \cdot & \cdot & t_0^{1/2} & \cdot & \cdot & \cdot & \cdot & \cdot & \cdot & \cdot \\
 \cdot & \cdot & \cdot & \cdot & \cdot & \cdot & \cdot & \cdot & \cdot & \cdot & \cdot & \cdot & 1 & \cdot & \cdot & \cdot \\
 \hline
  \cdot & t_0^{1/2} &  \cdot & \cdot & t_0-1 &  \cdot & \cdot & \cdot & \cdot & \cdot & \cdot & \cdot & \cdot & \cdot & \cdot & \cdot \\
   \cdot & \cdot & \cdot & \cdot & \cdot &-1 & \cdot & \cdot & \cdot & \cdot & \cdot & \cdot & \cdot & \cdot & \cdot & \cdot \\
    \cdot & \cdot & \cdot & \cdot & \cdot & \cdot &t_0t_1-1 & \cdot & \cdot &-t_0^{1/2}t_1^{1/2} & \cdot & \cdot & -t_0^{1/2}t_1^{1/2}Y & \cdot &\cdot & \cdot \\
 \cdot & \cdot & \cdot & \cdot &  \cdot &  \cdot &  \cdot &  \cdot &  \cdot &  \cdot &  \cdot &  \cdot &  \cdot & t_1^{1/2} & \cdot &  \cdot\\
 \hline   
  \cdot & \cdot & t_0^{1/2}  & \cdot &  \cdot &  \cdot & \cdot & \cdot & t_0-1 & \cdot &  \cdot &  \cdot &  \cdot &  \cdot &  \cdot &  \cdot \\
   \cdot &  \cdot & \cdot &  \cdot &  \cdot &  \cdot & -t_0^{1/2}t_1^{1/2} & \cdot &  \cdot & \cdot & \cdot & \cdot & Y &  \cdot & \cdot & \cdot\\
    \cdot & \cdot & \cdot & \cdot & \cdot & \cdot & \cdot & \cdot & \cdot & \cdot & -1 &  \cdot &  \cdot & \cdot & \cdot & \cdot \\
     \cdot & \cdot & \cdot & \cdot & \cdot & \cdot & \cdot & \cdot & \cdot & \cdot & \cdot & \cdot & \cdot & \cdot & t_1^{1/2}  \cdot\\
     \hline
      \cdot & \cdot & \cdot & 1&  \cdot & \cdot & -t_0^{1/2}t_1^{1/2}Y &  \cdot & \cdot & Y &  \cdot & \cdot & Y^2  & \cdot &  \cdot & \cdot\\
       \cdot & \cdot & \cdot & \cdot & \cdot & \cdot & \cdot & t_1^{1/2} &  \cdot & \cdot & \cdot & \cdot & \cdot & t_1-1 &  \cdot & \cdot \\
        \cdot & \cdot & \cdot & \cdot & \cdot & \cdot & \cdot & \cdot & \cdot & \cdot & \cdot & t_1^{1/2} & \cdot & \cdot &t_1-1  \cdot \\
         \cdot & \cdot & \cdot & \cdot & \cdot & \cdot & \cdot & \cdot & \cdot & \cdot & \cdot & \cdot & \cdot & \cdot & \cdot & t_1 
 
   \end{array}\right),$}
   \end{center}

where $Y=((t_0-1)(1-t_1))^{\frac{1}{2}}$.\\

Let $B_n$ be the braid group on $n$ strands with Artin generators $s_1,\ldots,s_{n-1}$. A R-matrix $R\in\mbox{End}(V\otimes V)$ defines a representation $\rho_R^n$ of the braid group in $\mbox{Aut}(V^{\otimes n})$ by sending $s_i$ to $\mbox{Id}_V^{\otimes (i-1)}\otimes R \otimes \mbox{Id}_V^{\otimes (n-1-i)}\in \mbox{Aut}(V^{\otimes n})$, for $i=1,\ldots,n-1$. In addition denote $\mu$ the automorphism of $V$  defined by $\mu(e_i)=\mu_i e_i$ for $i=1,\ldots 4$ where $(\mu_1,\mu_2,\mu_3,\mu_4)=(t_0^{-1}, -t_1, -t_0^{-1}, t_1)$.\\

Given $A\in \mbox{End}(V^{\otimes n})$ such that 
$$A(e_{i_1}\otimes \cdots \otimes e_{i_n})=\sum_{1\leq j_1,\ldots, j_n\leq 4}A_{i_1,\ldots,i_n}^{j_1,\ldots,j_n}e_{j_1}\otimes\cdots \otimes e_{j_n},$$
define for $m=1,\ldots,n$ the partial trace operator $tr_n^m(A)\in\mbox{End}(V^{\otimes (n-m)})$ by
$$tr_n^m(A)(e_{i_1}\otimes \cdots \otimes e_{i_{n-m}})=\sum_{1\leq j_1,\ldots,j_n\leq 4}A_{i_1,\ldots,i_{n-m},j_{n-m+1}\ldots,j_n}^{j_1,\ldots,j_{n-m},j_{n-m+1},\ldots,j_n}e_{j_1}\otimes\cdots\otimes e_{j_{n-m}}.$$ 
Given a braid $\beta\in B_n$ we denote by $\widehat{\beta}$ the closure of $\beta$ (see Figure (\ref{cl})) we define the Links-Gould invariant $\mathfrak{LG}(\widehat{\beta};t_0,t_1)$ of the link $\widehat{\beta}$ by the following formula:
$$\mbox{tr}_n^{n-1}((\mbox{Id}_V\otimes \mu^{\otimes (n-1)})(\rho_R^n(\beta))=\mathfrak{LG}(\widehat{\beta};t_0,t_1)\mbox{Id}_V.$$

\begin{center}
\begin{figure}[h!]
\input{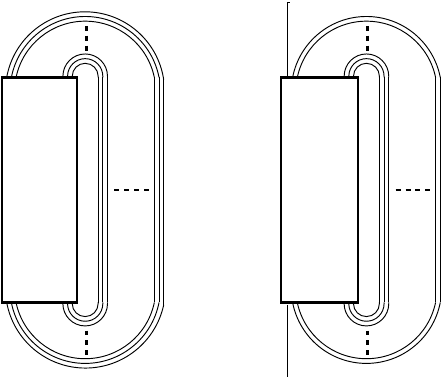_t}
\caption{Closure of a braid and partial closure of a braid.}
\label{cl}
\end{figure}
\end{center}

The Links-Gould invariant can also be defined by considering the right picture of Figure (\ref{cl}) which describes a topological partial trace and use the tangle invariant construction described in \cite{DeWitKauffLinks} (notice that one just needs to assign an endomorphism to each cup or cap). These two constructions coincide up to a change of variables.

The R-matrix that we introduced here has for eigenvalues $-1,t_0,t_1$.
Rescaling the R-matrix construction by $s_i \mapsto (-a) s_i$ with $a \in K^{\times}$,
we get a representation $B_n \to \GL(V^{\otimes n})$ in which the image of $s_i$ is annihilated by the cubic polynomial $(X-a)(X-b)(X-c)$, with $b = -t_0 a$, $c = -t_1 a$.
We use this renormalization from now on.


\section{The morphism $K B_n \onto LG_n(\alpha)$}
\label{sectstruct}



We take $\g = \sl(2|1)$ and we let $LG_n(\alpha)$ denote the commutant algebra of the image of $\U \g$ inside
$\End(V(0,\alpha)^{\otimes n})$.  Note that $LG_n(\alpha) \subset LG_{n+1}(\alpha)$
under $m \mapsto m \otimes 1$.

It contains the group algebra $\kk \mathfrak{S}_n$ of the symmetric group, as well
as the image $\mathcal{LG}_n(\alpha)$ of the Lie algebra $\mathcal{T}_n$
of infinitesimal braids (or horizontal chord diagrams), also known as the
holonomy Lie algebra of the space $\C_*^n$. Recall that this algebra $\mathcal{T}_n$
is generated by elements $t_{ij}$ which are dual to the 1-forms $\mathrm{dlog}(z_i-z_j)$,
and which satisfy the relations $t_{ij} = t_{ji}$, $[t_{ij},t_{ik}+t_{kj}] = 0$ and $[t_{ij},t_{kl}] = 0$
whenever $\# \{i,j,k,l \} = 4$. The morphism $\mathcal{T}_n \to \mathcal{LG}_n(\alpha)$
is given by $t_{ij} \mapsto \Omega_{ij}$.

Let $T_n$ be the image of $\sum_{1 \leq i,j\leq n} t_{ij}$ in $\mathcal{LG}_n(\alpha)$.

We let $[a,k]_r = a h_1^* + (r \alpha + k) h_2^*$ for $0 \leq a \leq r-1$, $0 \leq k \leq r-a-1$,
that we identify with the corresponding module $V(a,k+r \alpha)$. 
We also replace $\oplus$ by $+$ when it lightens notations.
Notice that $r \alpha + k \neq 0$ and $a+ r \alpha + k +1 \neq 0$ for $\alpha \not\in  \Q_{<0}$.
The tensor product decomposition above is thus translated as $[a,k]_r \otimes V(0,\alpha) = 
[a-1,k+1]_{r+1} + [a,k]_{r+1} + [a,k+1]_{r+1} + [a+1,k]_{r+1}$, under the convention
that $[a,l]_{r+1}$ is omitted if $a < 0$.
From this we get that, if $a \geq 2$,

$$
\begin{array}{lcl}
[a,k]_r \otimes V(0,\alpha) ^{\otimes 2} &=& 
[a-2,k+2]_{r+2} + 2[a-1,k+1]_{r+2} + 2[a-1,k+2]_{r+2} \\
& &  + 4[a,k+1]_{r+2}+
 [a,k]_{r+2} + 2 [a+1,k]_{r+2} \\
& & + [a,k+2]_{r+2} +2 [a+1,k+1]_{r+2}+
  [a+2,k]_{r+2}  
\end{array}
$$
and in particular $[0,0]_1 \otimes V(0,\alpha) = [0,0]_2 \oplus [0,1]_2 \oplus [1,0]_2$.

We note the following decompositions
$$
\begin{array}{lclr}
{[}a,k]_r \otimes V(0,2 \alpha) &=& [a,k]_{r+2} + [a+1,k]_{r+2} + [a-1,k+1]_{r+2}+  [a,k+1]_{r+2}& \mbox{ if } a \neq 0 \\
{[}0,k]_r \otimes V(0,2 \alpha) &=& [0,k]_{r+2} + [1,k]_{r+2} +  [0,k+1]_{r+2}& \\
{[}a,k]_r \otimes V(0,2 \alpha+1) &=& [a,k+1]_{r+2} + [a+1,k+1]_{r+2} + [a-1,k+2]_{r+2}+  [a,k+2]_{r+2}& \mbox{ if } a \neq 0 \\
{[}0,k]_r \otimes V(0,2 \alpha+1) &=& [0,k+1]_{r+2} + [1,k+1]_{r+2} +  [0,k+2]_{r+2}& \\
\end{array}
$$
Since $V(0,\alpha) \otimes V(0,\alpha) = 
V(0,2 \alpha) + V(0,2 \alpha+1) + V(1,2\alpha)$ it
follows that
$$
\begin{array}{lcl}
[a,k]_r \otimes V(1,2 \alpha) &=& 
[a-2,k+2]_{r+2} + [a-1,k+1]_{r+2} + [a-1,k+2]_{r+2} 
  + 2[a,k+1]_{r+2} \\ & & +
  [a+1,k]_{r+2} 
 + [a+1,k+1]_{r+2}+
  [a+2,k]_{r+2}  
\end{array}
$$

\begin{theor} \label{theosurjcommut} For generic values of $\alpha$, the image of $\U \mathcal{T}_n$
in $\End(V(0,\alpha)^{\otimes n})$ is $LG_n(\alpha)$.
\end{theor}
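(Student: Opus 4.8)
The plan is to prove, by induction on $n$, that the subalgebra $A_n \subseteq LG_n(\alpha)$ generated by the operators $\Omega_{ij}$ --- that is, the image of $\U \mathcal{T}_n$ --- is all of the commutant $LG_n(\alpha)$. For generic $\alpha$ every weight occurring in $V(0,\alpha)^{\otimes n}$ is typical, so this module is completely reducible and $LG_n(\alpha)$ is split semisimple; moreover the branching rule $[a,k]_r \otimes V(0,\alpha) = [a-1,k+1]_{r+1} + [a,k]_{r+1} + [a,k+1]_{r+1} + [a+1,k]_{r+1}$ recorded above is multiplicity-free. Hence $LG_n(\alpha)$ is the path algebra of its Bratteli diagram $\Gamma$: it admits a Gelfand--Tsetlin basis of matrix units $E_{T,T'}$ indexed by pairs of paths $T,T'$ in $\Gamma$ with a common endpoint, and the diagonal idempotents $e_T = E_{T,T}$ span a maximal commutative subalgebra $GZ_n$, the Gelfand--Tsetlin algebra. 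The task then splits into showing that $A_n$ contains (i) all the $e_T$ and (ii) enough off-diagonal $E_{T,T'}$ to fill each matrix block.

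For (i), I would first invoke Lemma \ref{lemCT}, which expresses the partial Casimir $\Delta_k(C)$ acting on the first $k$ tensor factors as $\Delta_{k-1}(C)\otimes 1 + c_0\,\mathrm{Id} + 2\sum_{i<k}\Omega_{ik}$, where $c_0 = C_{|V(0,\alpha)} = -2\alpha(\alpha+1)$ is a scalar. By induction on $k$ this shows each $\Delta_k(C)$ lies in $A_n$. Now $\Delta_k(C)$ acts on an isotypic component $V(\mu)$ at level $k$ by the scalar $\langle \mu,\mu+2\rho\rangle = -2a_2(a_1+a_2+1)$ computed above; for generic $\alpha$ these scalars are pairwise distinct as $\mu$ runs over the vertices at level $k$, so Lagrange interpolation in $\Delta_k(C)$ produces every central idempotent of $LG_k$. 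Letting $k$ vary, the centers of $LG_1,\dots,LG_n$ all lie in $A_n$ and together generate $GZ_n$; thus $A_n \supseteq GZ_n$, and in particular $A_n$ contains all the $e_T$. (The same distinctness shows that the supersymmetry $\tau_{i,i+1}$, a polynomial in $\Omega_{i,i+1}$ since the latter has the three distinct eigenvalues $-2\alpha^2,-2(\alpha+1)^2,-2\alpha(\alpha+1)$, lies in $A_n$, so $\kk\mathfrak{S}_n \subseteq A_n$.)

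For (ii) I would run the induction. Assume $A_n = LG_n(\alpha)$. Then $A_{n+1}$ contains $LG_n(\alpha)\otimes 1$, the image of the subalgebra generated by the $\Omega_{ij}$ with $i,j\le n$, which by multiplicity-freeness acts in each block $\End(M_\mu) = \End\big(\bigoplus_{\nu\to\mu}M_\nu\big)$ of $LG_{n+1}(\alpha)$ as the full diagonal subalgebra $\bigoplus_{\nu\to\mu}\End(M_\nu)$. It also contains the idempotents $p_{\mu,\nu}$ projecting $M_\mu$ onto its branch $M_\nu$, since $p_{\mu,\nu}$ is the product of a central idempotent of $LG_{n+1}$ and one of $LG_n$, both lying in $GZ_{n+1}\subseteq A_{n+1}$; equivalently $T_{n+1}=\sum_{i\le n}\Omega_{i,n+1}\in A_{n+1}$ already separates the branches $\nu\to\mu$ by the distinct values $\tfrac12(c(\mu)-c(\nu)-c_0)$. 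It then remains to link the branches. For this I would cut the generator $\Omega_{n,n+1}\in A_{n+1}$ by these idempotents and show that $p_{\mu,\nu}\,\Omega_{n,n+1}\,p_{\mu,\nu'}$ is nonzero for a family of pairs $(\nu,\nu')$ making the branch set $\{\nu:\nu\to\mu\}$ connected. A single nonzero map $M_{\nu'}\to M_\nu$, together with the full blocks $\End(M_\nu)$ and $\End(M_{\nu'})$ already present, recovers all of $\Hom(M_{\nu'},M_\nu)$; connectivity then yields every $E_{T,T'}$ with endpoint $\mu$ and hence the whole block $\End(M_\mu)$. This gives $A_{n+1}=LG_{n+1}(\alpha)$ and closes the induction, the base cases $n\le 2$ being immediate (e.g. $LG_2(\alpha)=\kk^3$ is spanned by the three spectral idempotents of $\Omega_{12}$).

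The main obstacle is exactly this non-vanishing and connectivity statement, which is where genericity of $\alpha$ enters decisively. I would establish it by a local computation: because the branching is multiplicity-free, the matrix of $\Omega_{n,n+1}$ between two branches depends only on the three consecutive vertices involved, and so reduces to the analysis of $\Omega_{12}$ inside a single product $V(\mu')\otimes V(0,\alpha)\otimes V(0,\alpha)$, i.e. to the $6j$-type coefficients for $\sl(2|1)$ coupling $V(0,2\alpha)$, $V(0,2\alpha+1)$ and $V(1,2\alpha)$. These coefficients are rational functions of $\alpha$; it suffices to check they are not identically zero and that the resulting graph on the at most four branches over each $\mu$ is connected. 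Finitely many such conditions then hold simultaneously off a proper Zariski-closed subset, which is precisely the meaning of ``generic $\alpha$''. This is the infinitesimal counterpart of the surjectivity $\kk B_n \onto LG_n$, and I expect the bookkeeping of these coupling coefficients, rather than any conceptual difficulty, to be the real work.
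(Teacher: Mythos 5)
Your overall architecture is sound, and it is in substance the paper's own proof rewritten in Gelfand--Tsetlin language. Your step (i) --- partial Casimirs in the image via Lemma \ref{lemCT}, pairwise distinct Casimir polynomials $-2(r\alpha+k)(r\alpha+k+a+1)$ at each level, hence the full diagonal algebra in a basis adapted to the Bratteli diagram --- is exactly the first half of the paper's argument (there phrased as: the image of $\U\mathcal{D}_r$ is the whole algebra of diagonal matrices in a standard basis). Your reduction of step (ii) to a local two-step statement, namely that the matrix of $\Omega_{n,n+1}$ between paths agreeing outside level $n$ is governed by the single module $\Hom_{\U\g}([a,k]_r,[b,l]_{r-2}\otimes V(0,\alpha)^{\otimes 2})$, is also the paper's reduction, where it appears as: it suffices that these Hom spaces be irreducible under the Lie algebra $\mathcal{L}$ generated by $u=t_{r-1,r}$ and $Y=t_r-t_{r-1}$. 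Your corner-and-connectivity argument is correct and even shows that slightly less than full local irreducibility would suffice.

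The genuine gap is that the decisive step --- non-vanishing of the local coupling coefficients, and connectivity of the resulting graph on the (up to four) branches, for generic $\alpha$ --- is asserted rather than proved. Saying ``these coefficients are rational functions of $\alpha$, so it suffices to check they are not identically zero'' concentrates the entire content of the theorem into an unverified claim; nothing in your write-up rules out identical vanishing, and the paper's own exclusions (the hard case needs $a\geq 1$ and $r\geq 3$, and degenerate eigenvalue coincidences must be ruled out) show the verification is delicate. The paper fills this hole not by computing $6j$-type coefficients but by an algebraic device you should note: in the 4-dimensional case $[b,l]_{r-2}=[a,k-1]_{r-2}$ with $a\geq 1$, the operators $u$ and $s$ are explicit in a basis adapted to $V(0,\alpha)^{\otimes 2}=V(0,2\alpha)+V(0,2\alpha+1)+V(1,2\alpha)$, the relation $Y+sYs=2u$ forces $Y$ into a constrained block shape, and Lemma \ref{lemCT} computes $\Sp(Y)$ as differences of Casimirs; genericity gives $\Sp(Y)\cap\Sp(u)=\emptyset$, which forces the connecting blocks of $Y$ to be invertible with nonzero rows and columns, yielding irreducibility except for two sparse candidate forms of $Y$, themselves excluded because they would impose eigenvalue sums such as $-2(2\alpha^2+\alpha)$ that cannot occur generically for $r\geq 3$. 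Until you supply an argument of this kind (or actually carry out and control the $\sl(2|1)$ coupling computation, which is nontrivial bookkeeping and not mere bookkeeping), your proof establishes only that the image contains the Gelfand--Tsetlin subalgebra and $\kk\mathfrak{S}_n$, not the full commutant.
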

\begin{proof}
The techniques used to prove the result are inherited from \cite{THESE}, to which the reader can report for
a detailed description of the general setting.
The value of $C$ on $[a,k]_r$ is $-2(r \alpha + k) (r\alpha + k + a + 1)$.
It is a polynomial of degree 2 in $\alpha$ with dominant term $-2 r^2$, hence it is uniquely
defined by its roots $-k/r$ and $-(k+a+1)/r$. For a given $r$, no two such
polynomial can be equal, as $\{ -k/r, -(k+a+1)/r \} = \{ -k'/r, -(k'+a'+1)/r \}$
iff $\{ k, k+a+1 \} = \{ k', k'+a'+1 \}$, and $k$ is determined by $k = \min \{ k, k+a+1 \}$.

We now consider a given $r$, and the $\U \mathcal{T}_r$ module
$\Hom_{\U \g} ([a,k]_r,V(0,\alpha)^{\otimes n})$, with $\g = \sl(2|1)$. We choose
a basis compatible with the Bratteli diagram (that is, a basis of highest weight
vectors for all $\Delta_s(\g) \otimes 1^{\otimes (r-s)}$ for all $s \leq r$). We call
such a basis a standard basis. See table \ref{brattelir5} for the beginning of the
Bratteli diagram.

The above remark on the Casimir together with lemma \ref{lemCT} imply that,
when restricted to the commutative Lie subalgebra $\mathcal{D}_r$ of $\mathcal{T}_r$ generated
by the $t_{1,r} + \dots + t_{r-1,r}$, the restriction of this $\mathcal{T}_r$-module
is the multiplicity-free sum of 1-dimensional representations, this decomposition
being necessarily given by a standard basis. 
In particular, the image of $\U \mathcal{D}_r$ in such a basis is the whole algebra of diagonal matrices.
In order to prove the theorem, we moreover only need to show that such a $\U \mathcal{T}_r$-module
is irreducible, as the eigenvalues of $\sum \Omega_{ij}$ determine the value
of $\Delta_r(C)$ hence the module $[a,k]_r$.

In order to prove the irreducibility, we make the following remark. Let
$t_r = t_{1,r} + \dots + t_{r-1,r}$ and $Y = t_r - t_{r-1}$,
$s$ the transposition $(r-1, r)$, and $u = t_{r-1,r}$. We introduce
the subalgebra $L$ of $\mathfrak{S}_r \ltimes \U \mathcal{T}_r$ generated by
$s,u$ and $Y$, and $\mathcal{L}$ the Lie subalgebra of $\mathcal{T}_r$
generated by $u$ and $Y$. Note that they commute with $\U \mathcal{T}_{r-2}$, and
that the image of $L$ in $\End_{\U \g}(V(0,\alpha)^{\otimes r})$
is the same as the image of $\U \mathcal{L} \subset \U \mathcal{T}_r$, as the image of $s$ is a polynomial (depending on $\alpha$) of
the image of $u$,
at least for generic values of $\alpha$ (this last assertion needs only
to be checked for $r=2$, and in this case it follows from the spectral decomposition
of $V(0,\alpha)^{\otimes 2}$).

Now, the restriction to $\U \mathcal{T}_{r-2} \times \U \mathcal{L}$ of 
$\Hom_{\U \g} ([a,k]_r,V(0,\alpha)^{\otimes r})$ is
$$
\bigoplus_{b,l} \Hom_{\U \g} ([b,l]_{r-2}, V(0,\alpha)^{\otimes (r-2)}) \otimes \Hom_{\U \g} ([a,k]_r,[b,l]_{r-2} \otimes V(0,\alpha)^{\otimes 2})
$$
We first prove that is sufficient to show that the $\U \mathcal{L}$-modules $\Hom_{\U \g} ([a,k]_r,[b,l]_{r-2} \otimes V(0,\alpha)^{\otimes 2})$ are irreducible. Indeed, any stable subspace of
$\Hom_{\U \g} ([a,k]_r,V(0,\alpha)^{\otimes r})$ need to have for basis a subset $\mathcal{B}_0$ of our given (standard) basis $\mathcal{B}$.
Note that this standard basis is naturally indexed by sequences of $\U \g$-modules of the form $([a_1,k_1]_1, [a_2,k_2]_2,\dots,
[a_r = a, k_r = k]_r)$ or, more combinatorially, by paths in the Bratteli diagram. 

By induction on $r$, we can assume that all the $\Hom_{\U \g}([b,l]_{r-1},V(0,\alpha)^{\otimes (r-1)})$
are irreducible under the action of $\U \mathcal{T}_{r-1}$. 
Since the restriction of $\Hom_{\U \g} ([a,k]_r,V(0,\alpha)^{\otimes r})$ to $\U \mathcal{T}_{r-1}$ is
$$
\bigoplus_{b,l} \Hom_{\U \g}([b,l]_{r-1},V(0,\alpha)^{\otimes (r-1)}) \otimes \Hom_{\U \g}([a,k]_r,[b,l]_{r-1} \otimes V(0,\alpha))
$$
(with $\U \mathcal{T}_{r-1}$ acting trivially on the second tensor factor) this implies that, if some $\gamma = (\gamma_1,\dots,\gamma_r = [a,k]_r)$ belongs to
$\mathcal{B}_0$, then all the elements of $\mathcal{B}$ with the same $\gamma_{r-1}$ belong to $\mathcal{B}_0$. From the
Bratteli diagram it is clear that, if $\mathcal{B}_0 \neq \mathcal{B}$, there exist $\delta \in \mathcal{B} \setminus \mathcal{B}_0$
and $\gamma \in \mathcal{B}_0$ which differ only at the place $r-1$, that is $\gamma_i = \delta_i$ for $i \neq r-1$. But if we know
in addition that the $\U \mathcal{L}$-action on $\Hom_{\U \g}( [a,k]_r, \gamma_{r-2} \otimes V(0,\alpha)^{\otimes 2})
= \Hom_{\U \g}( [a,k]_r, \delta_{r-2} \otimes V(0,\alpha)^{\otimes 2})$ is irreducible, there exists an element of $\U \mathcal{L}$
that maps $\gamma$ and $\delta$, which proves the contradiction that we want.

We now prove that every $M = \Hom_{\U \g}([a,k]_r, [b,l]_{r-2} \otimes V(0,\alpha)^{\otimes 2})$ is
an irreducible $\U \mathcal{L}$-module, or equivalently an irreducible $L$-module. There are several
cases to consider. We contend ourselves with the most difficult (4-dimensional) one,
the other ones being similar and easier (and basically already dealt with in \cite{THESE}, annexe).
This is the case where $[b,l]_{r-2} = [a,k-1]_{r-2}$, with $a \geq 1$. For the action of
$u$ and $s$, $M$ can be decomposed as $D_1 \oplus D_2 \oplus U$ with
$\dim D_1 = \dim D_2 = 1$, $\dim U = 2$, and
$$
\begin{array}{lcl}
D_1 &=& \Hom_{\U \g}([a,k]_r, [a,k-1]_{r-2} \otimes V(0,2\alpha)) \\
D_2 &=& \Hom_{\U \g}([a,k]_r, [a,k-1]_{r-2} \otimes V(0,2\alpha+1)) \\
U &=& \Hom_{\U \g}([a,k]_r, [a,k-1]_{r-2} \otimes V(1,2\alpha) )
\end{array}
$$
We choose a basis of $M$ compatible with this decomposition. In such a basis,
we have
$$
u = -2 \left( \begin{array}{ccccc} \alpha^2 & 0 & 0 & 0 \\  
0 & (\alpha+1)^2 & 0 & 0 \\ 
0 & 0 & \alpha(\alpha+1) & 0 \\ 
0 & 0 & 0&\alpha(\alpha+1)  \end{array} \right) 
\ \ 
s = \left( \begin{array}{ccccc} 1 & 0 & 0 & 0 \\  
0 & 1 & 0 & 0 \\ 
0 & 0 & -1 & 0 \\ 
0 & 0 & 0&-1  \end{array} \right) 
$$
Now, we note that $s,u$ and $Y$ are related
by the (easy-to-check) relation $Y + sYs = 2u$. This relation
implies that $Y$ has the form
$$
\left( \begin{array}{cc} D & M \\  N & v \Id \end{array} \right)
$$
with $M,N,D, \Id \in Mat_2(\kk)$, 
$$
D = -2 \left( \begin{array}{cc} \alpha^2 & 0 \\ 0 & (\alpha+1)^2 \\ \end{array} \right)
\ \ \Id = \left( \begin{array}{cc} 1 & 0 \\ 0 & 1  \end{array} \right)
$$
and $v = -2\alpha(\alpha+1)$. We now compute the eigenvalues of $Y$, using lemma \ref{lemCT}. 
Recall the
part of the Bratteli diagram that is involved here.
$$
\xymatrix{
& & [a,k-1]_{r-2} \ar[dll] \ar[dl] \ar[dr] \ar[drr] \\
[a-1,k]_{r-1} \ar[drr]  & [a,k-1]_{r-1}\ar[dr] & & [a,k]_{r-1}\ar[dl] & [a+1,k-1]_{r-1} \ar[dll] \\
& &  [a,k]_r  \\
}
$$
By lemma \ref{lemCT}, the eigenvalues of $Y$ are the ones
of $(\Delta_r(C) + \Delta_{r-2}(C)\otimes 1 \otimes 1 - 2\Delta_{r-1}(C) \otimes 1 )/2$,
that is $-2(\alpha^2 + \alpha -\frac{a}{2}), -2(\alpha^2 + r \alpha + k + \frac{a}{2}),
-2(\alpha^2 + (2-r) \alpha - \frac{a}{2} - k), -2(\alpha^2 + \alpha + \frac{a}{2}+1)$. Since $a \geq 1$
and $r \geq 3$, we note that $Y$ and $u$ share no common eigenvalue for generic $\alpha$. This implies the following
\begin{itemize}
\item $\det M \neq 0$, for otherwise there would exist some $m = (x,y) \in \kk^2$ in $\Ker M$, hence $Y \tilde{m} = v Y$ for $\tilde{m} = (0,0,x,y)$
contradicting
$v \not\in \Sp(Y)$.
\item the columns of $N$ are non-zero.  
\end{itemize}
Since $Y$ has the same eigenvalues as its transpose, this implies also that $\det N \neq 0$
and that the rows of $M$ are non-zero.
It is then straightforward to check that these conditions imply the irreducibility of
$M$ under $u$ and $Y$, unless $Y$ has one of the following two forms
$$
Y = -2 \left( \begin{array}{ccccc} \alpha^2 & 0 & x & 0 \\  
0 & (\alpha+1)^2 & 0 & y \\ 
z & 0 & \alpha(\alpha+1) & 0 \\ 
0 & t & 0&\alpha(\alpha+1)  \end{array} \right) 
\ \mbox{ or }\ 
Y = -2 \left( \begin{array}{ccccc} \alpha^2 & 0 & 0 & x \\  
0 & (\alpha+1)^2 & y & 0 \\ 
0 & z & \alpha(\alpha+1) & 0 \\ 
t & 0 & 0&\alpha(\alpha+1)  \end{array} \right) 
\ \ 
$$
But in these two cases, $Y$ would have two distinct eigenvalues whose sum
is $-2(\alpha^2 + \alpha(\alpha+1) )= -2(2 \alpha^2 + \alpha)$ (the two others having
sum $-2((\alpha+1)^2 + \alpha(\alpha+1) )= -2(2 \alpha^2 + 3\alpha+1)$).
It is immediately checked that, when $r \geq 3$, this cannot happen for generic $\alpha$,
and this concludes the proof.
\end{proof}

This theorem implies the following.
\begin{cor} For generic values of $\alpha$, the monodromy morphisms $\C((h)) P_n \to LG_n(\alpha)$ are surjective, where $P_n$ denotes the pure braid group on $n$ strands.
\end{cor}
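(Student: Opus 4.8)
The plan is to deduce this from Theorem \ref{theosurjcommut} by a leading-term argument over $\C[[h]]$, exploiting that the monodromy of the Knizhnik--Zamolodchikov connection recovers the residues $\Omega_{ij}$ to first order in the formal parameter. First I would fix the flat connection $\omega = h \sum_{i<j} \Omega_{ij}\, \mathrm{dlog}(z_i - z_j)$ on $\C_*^n$, whose integrability is recalled in section \ref{ssdefcasimir}, and let $\rho_h \colon P_n \to \GL(V(0,\alpha)^{\otimes n} \otimes \C((h)))$ be its monodromy. Since every $\Omega_{ij}$ lies in $\mathcal{LG}_n(\alpha) \subset LG_n(\alpha)$, the holonomy of $\omega$ takes values in $LG_n(\alpha) \otimes \C((h))$; extending $\rho_h$ linearly to $\C((h)) P_n$, the inclusion of its image in $LG_n(\alpha)\otimes \C((h))$ is automatic, and it remains to prove that the image generates the target as a $\C((h))$-algebra.

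Next I would compute the leading term of the monodromy along a meridian. For the elementary pure braid $A_{ij} \in P_n$ encircling the divisor $z_i = z_j$, the local form of $\omega$ near that divisor has residue $h\,\Omega_{ij}$, so Chen's iterated-integral expansion gives $\rho_h(A_{ij}) = \Id + 2\pi i\, h\, \Omega_{ij} + O(h^2)$, the regularizing conjugation needed to compare with $\exp(2\pi i\, h\, \Omega_{ij})$ being of the form $\Id + O(h)$ and hence not affecting the coefficient of $h^1$. Setting $y_{ij} := (2\pi i\, h)^{-1}(\rho_h(A_{ij}) - \Id) = \Omega_{ij} + h(\cdots)$, each $y_{ij}$ lies in $\Lambda := LG_n(\alpha)[[h]]$ and, being a $\C((h))$-multiple of $\rho_h(A_{ij}) - \Id$, belongs to the $\C((h))$-subalgebra $\mathcal{A}$ of $LG_n(\alpha) \otimes \C((h))$ generated by $\rho_h(P_n)$.

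I would then conclude by a Nakayama argument. Let $\mathcal{A}_0 \subseteq \Lambda$ be the $\C[[h]]$-subalgebra generated by the $y_{ij}$, so that $\mathcal{A}_0 \subseteq \mathcal{A}$. The ring $\Lambda$ is free of finite rank $\dim_\C LG_n(\alpha)$ over the local ring $\C[[h]]$, and the reduction of $\mathcal{A}_0$ modulo $h$ is a unital subalgebra of $\Lambda / h\Lambda = LG_n(\alpha)$ containing every $\Omega_{ij}$; by Theorem \ref{theosurjcommut} these generate $LG_n(\alpha)$, so $\mathcal{A}_0 + h\Lambda = \Lambda$. Nakayama's lemma applied to the finitely generated $\C[[h]]$-module $\Lambda / \mathcal{A}_0$ yields $\mathcal{A}_0 = \Lambda$. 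Hence $\mathcal{A} \supseteq \Lambda$, and since $\mathcal{A}$ is a $\C((h))$-subspace we get $\mathcal{A} = \Lambda \otimes_{\C[[h]]} \C((h)) = LG_n(\alpha) \otimes \C((h))$, i.e. $\rho_h$ is surjective. I expect the only genuinely delicate point to be the first-order computation of the meridian monodromy---verifying that its $h^1$-coefficient is a nonzero scalar multiple of $\Omega_{ij}$ and that the higher corrections remain in $\Lambda$---since this is what feeds the otherwise formal Nakayama step; it is precisely the type of monodromy computation carried out in \cite{THESE}, to which I would refer for the details.
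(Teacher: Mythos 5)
Your proposal is correct and is, modulo the added detail, exactly the paper's argument: the paper observes that the monodromy maps the standard generators of $P_n$ to elements of the form $1 + 2h\,\Omega_{ij} + \text{higher terms}$ in $LG_n(\alpha)\otimes\C[[h]]$, invokes Theorem \ref{theosurjcommut} for the generation of $LG_n(\alpha)$ by the $\Omega_{ij}$, and concludes by the same application of Nakayama's lemma over $\C[[h]]$. Your first-order meridian computation and the reduction-mod-$h$ step simply spell out what the paper calls ``an elementary application of Nakayama's lemma.''
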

\begin{proof} The monodromy construction provides a morphism $A P_n \to LG_n(\alpha)$
where $A = \C[[h]]$. The image of $P_n$ is generated by elements whose
image is $1 + 2h \Omega_{ij} + \mbox{ higher terms }$. The conclusion
is then an elementary application of Nakayama's lemma.
\end{proof}

\begin{remark} In the case of ordinary Lie algebras, the list of cases for which
we have a similar property has been obtained in \cite{LEHRERZHANG}. 
\end{remark}

Since, for generic $\alpha \in \mathbbm{k}$, the Bratteli diagram does not depend on $\alpha$, we can define a generic version of the split semisimple (tower of) algebra(s) $LG_n(\alpha)$, defined over an arbitrary field $K$, that we denote $LG_n$. By
the $R$-matrix construction, we get a morphism $K B_n \to LG_n$, with $K$
as in section \ref{sectLG}. By the Kohno-Drinfeld theorem of N. Geer, this morphism
is conjugated, up to a convenient embedding $K \into \C((h))$, to the monodromy
morphisms considered above. This embedding maps
$q \mapsto e^{-h}$, $t_0 \mapsto q^{-2 \alpha}$, $t_1 \mapsto q^{2(\alpha+1)}$, $a \mapsto (-1) q^{2 \alpha(\alpha+1)} = - \exp(-2h \alpha(\alpha+1))$,
$b =-a t_0\mapsto \exp(-2 h \alpha^2)$, $c = -a t_1 \mapsto \exp(-2 h (\alpha+1)^2)$.

The previous corollary thus implies the following one.

\begin{cor} \label{corsurj} The morphism $K P_n \to LG_n$ is surjective, where $P_n$ denotes the pure braid group on $n$ strands.
\end{cor}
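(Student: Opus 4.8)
The plan is to transfer the surjectivity established in the preceding corollary from the monodromy side to the $R$-matrix side by base change along the embedding $K \into \C((h))$ recalled just above. The guiding principle is that extension of scalars along a field extension is exact and preserves the dimension of the image of a linear map, so the surjectivity we want can be tested after enlarging the field of scalars.

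First I would set $I$ to be the image of $K P_n$ in $LG_n$ under the $R$-matrix morphism; it is a $K$-subspace (in fact a subalgebra) of $LG_n$. Since extension of scalars is exact, the image of $\C((h)) P_n$ under the base-changed morphism $\C((h)) B_n \to LG_n \otimes_K \C((h))$ is precisely $I \otimes_K \C((h))$, and one has $\dim_{\C((h))}(I \otimes_K \C((h))) = \dim_K I$.

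Next I would invoke the Kohno--Drinfeld theorem of N. Geer recalled above: after the embedding $K \into \C((h))$, this base-changed morphism is conjugate to the monodromy morphism $\C((h)) P_n \to LG_n(\alpha)$, the conjugation being realised by an algebra isomorphism $LG_n \otimes_K \C((h)) \simeq LG_n(\alpha)$ that intertwines the two actions of $B_n$, hence of $P_n$. Under this isomorphism $I \otimes_K \C((h))$ is carried onto the image of $\C((h)) P_n$ by the monodromy morphism, which by the preceding corollary is all of $LG_n(\alpha)$. I would then conclude that $I \otimes_K \C((h)) = LG_n \otimes_K \C((h))$, whence $\dim_K I = \dim_K LG_n$ and therefore $I = LG_n$, which is the asserted surjectivity.

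I expect the only genuinely delicate step to be the compatibility of the two morphisms under the change of field, namely that the $R$-matrix representation over $K$ becomes conjugate, after extension of scalars, to the monodromy representation over $\C((h))$. This is exactly what Geer's theorem supplies, together with the explicit dictionary $q \mapsto e^{-h}$, $t_0 \mapsto q^{-2\alpha}$, $t_1 \mapsto q^{2(\alpha+1)}$ relating the parameters (and the fact that for generic $\alpha$ these images are algebraically independent, so that the embedding $K \into \C((h))$ indeed exists). Everything else reduces to the elementary observation that base change preserves dimensions.
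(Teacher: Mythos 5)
Your proposal is correct and takes essentially the same route as the paper, which simply asserts that the previous corollary (surjectivity of the monodromy morphism $\C((h))P_n \to LG_n(\alpha)$) combined with Geer's Kohno--Drinfeld theorem and the embedding $K \hookrightarrow \C((h))$ implies the result. You have merely made explicit the routine base-change bookkeeping (exactness of scalar extension, preservation of the dimension of the image, and the intertwining isomorphism) that the paper leaves implicit.
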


{\ \ }

\begin{figure}
\xymatrix{
 &  & \un \ar[d] \\
 &  & V(0,\alpha) \ar[dl] \ar[d] \ar[dr] \\
& V(0,2 \alpha)\ar[dl] \ar[drr] \ar[d] &  V(0,2\alpha+1) \ar[d] \ar[dl] \ar[drr] & V(1,2 \alpha) \ar[dll]\ar[d]\ar[dr] \ar[drr] \\
V(0,3\alpha) & V(0,3\alpha+1) &  V(0,3\alpha+2) & V(1,3\alpha)& V(1,3\alpha+1)&V(2,3\alpha)\\
}

\xymatrix{
 &  & [0,0]_0 \ar[d] \\
 &  & [0,0]_1 \ar[dl] \ar[d] \ar[dr] \\
& [0,0]_2\ar[dl] \ar[drr] \ar[d] &  [0,1]_2 \ar[d] \ar[dl] \ar[drr] & [1,0]_2 \ar[dll]\ar[d]\ar[dr] \ar[drr] \\
{[}0,0]_3 & [0,1]_3 &  [0,2]_3 & [1,0]_3& [1,1]_3&[2,0]_3\\
}
\caption{Generic encoding of the $\sl(2|1)$-modules on the Bratteli diagram for $r \leq 3$.} \label{figbrat3}
\end{figure}
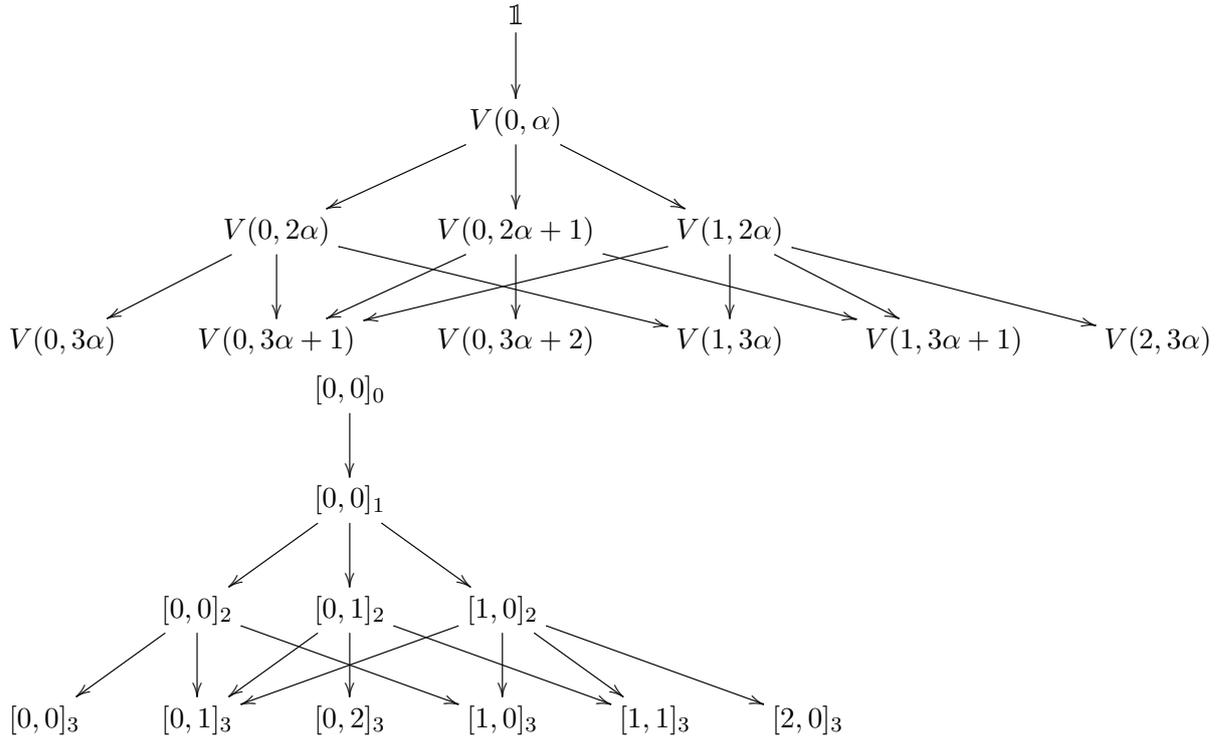

\begin{table}
\begin{center}
\resizebox{14cm}{!}{
\xymatrix{
 &  &  & [0,0]_0 \ar[d] \\
 &  &  & [0,0]_1 \ar[dl] \ar[d] \ar[dr] \\
 & & [0,0]_2\ar[dl] \ar[drr] \ar[d] &  [0,1]_2 \ar[d] \ar[dl] \ar[drr] & [1,0]_2 \ar[dll]\ar[d]\ar[dr] \ar[drr] \\
 & {[}0,0]_3 \ar[drrr] \ar[dl] \ar[d] & [0,1]_3 \ar[dl] \ar[d] \ar[drrr] &  [0,2]_3 \ar[dl] \ar[d] \ar[drrr] & [1,0]_3 \ar[dlll] \ar[d] \ar[dr] \ar[drrr] & [1,1]_3  \ar[dlll] \ar[d] \ar[dr] \ar[drrr] &[2,0]_3 \ar[dl] \ar[dr] \ar[drr] \ar[drrr] \\
 {[}0,0]_4 \ar[dr] \ar[drr] \ar[ddrr]& [0,1]_4 \ar[dr] \ar[drr] \ar[ddrr] & [0,2]_4 \ar[dr] \ar[drr] \ar[ddrr] & [0,3]_4  \ar[dr] \ar[drr] \ar[ddrr] & [1,0]_4 \ar[dll] \ar[ddll] \ar[ddl] \ar[drr] & [1,1]_4  \ar[dll] \ar[ddll] \ar[ddl] \ar[drr]  & [1,2]_4  \ar[dll] \ar[ddll] \ar[ddl] \ar[drr] & [2,0]_4 \ar[ddllll] \ar[dl]
\ar[d] \ar[ddl] & [2,1]_4 \ar[ddllll] \ar[dl]
\ar[d] \ar[ddl] & [3,0]_4 \ar[ddl] \ar[ddll] \ar[ddlll] \ar[dll] \\
 & {[}0,0]_5 & [0,1]_5 & [0,2]_5 & [0,3]_5 & [0,4]_5 & [2,0]_5 & [2,1]_5 & [2,2]_5 \\
  &  & [1,0]_5  & [1,1]_5  &  [1,2]_5 & [1,3]_5 &  [3,0]_5 & [3,1]_5 & [4,0]_5 \\ 
}
}
\end{center}
\caption{Bratteli diagram for $r \leq 5$ : labels}
\label{brattelir5}
\end{table}

\begin{table}

\begin{center}
\mbox{
\xymatrix{
& &  &  & 1 \ar[d] \\
& &  &  & 1 \ar[dl] \ar[d] \ar[dr] \\
& & & 1\ar[dl] \ar[drr] \ar[d] &  1 \ar[d] \ar[dl] \ar[drr] & 1 \ar[dll]\ar[d]\ar[dr] \ar[drr] \\
& & 1 \ar[drrr] \ar[dl] \ar[d] & 3 \ar[dl] \ar[d] \ar[drrr] &  1 \ar[dl] \ar[d] \ar[drrr] & 2 \ar[dlll] \ar[d] \ar[dr] \ar[drrr] & 2  \ar[dlll] \ar[d] \ar[dr] \ar[drrr] & 1\ar[dl] \ar[dr] \ar[drr] \ar[drrr] \\
& 1\ar[dr] \ar[drr] \ar[ddrr]& 6 \ar[dr] \ar[drr] \ar[ddrr] & 6\ar[dr] \ar[drr] \ar[ddrr] & 1 \ar[dr] \ar[drr] \ar[ddrr] & 3 \ar[dll] \ar[ddll] \ar[ddl] \ar[drr] & 8  \ar[dll] \ar[ddll] \ar[ddl] \ar[drr]  & 3  \ar[dll] \ar[ddll] \ar[ddl] \ar[drr] & 3 \ar[ddllll] \ar[dl]
\ar[d] \ar[ddl] & 3 \ar[ddllll] \ar[dl]
\ar[d] \ar[ddl] & 1 \ar[ddl] \ar[ddll] \ar[ddlll] \ar[dll] \\
& & 1 & 10 & 20 & 10 & 1 & 6 & 15& 6 \\
 & &  & 4  & 20  &  20 & 4 &  4 & 4 & 1 \\ 
}
}\end{center}
\caption{Bratteli diagram for $r \leq 5$ :  dimensions}

\end{table}

\section{Cubic Hecke algebras and their representations}
\label{secthecke}

\subsection{Definition and general properties}

Let $K$ be a field of characteristic $0$, and $a,b,c \in K^{\times}$. The cubic Hecke
algebra $H_n = H_n(a,b,c)$ is the quotient of the group algebra $K B_n$ of the braid
group by the relations $(s_i -a)(s_i -b)(s_i -c) = 0$ or, equivalently,
by the single relation $(s_1 -a)(s_1 -b)(s_1 -c) = 0$, since all $s_i$'s
are conjugated in $B_n$. In case $a,b,c$ are three distinct roots of $1$, $H_n$
is the group algebra of the group $\Gamma_n =B_n/s_i^3 = B_n/s_1^3$,
which is known to be finite if and only if $n \leq 5$, by a theorem of Coxeter (see \cite{COX}).

We recall from \cite{G32} the following theorem

\begin{theor} Let $\overline{K}$ denote the algebraic closure of $K$. If $a,b,c$
are algebraically independant over $\Q$, then $H_n \simeq \overline{K} \Gamma_n$
for $n \leq 5$. Moreover, these isomorphisms can be chosen
so that the natural diagrams commute
$$
\xymatrix{
H_2 \ar[r] \ar[d] & H_3 \ar[r] \ar[d]& H_4 \ar[r] \ar[d] & H_5 \ar[d] \\
\overline{K} \Gamma_2 \ar[r] & \overline{K} \Gamma_3 \ar[r] & \overline{K} \Gamma_4 \ar[r]  & \overline{K} \Gamma_5 
}
$$
and they are uniquely defined up to inner automorphism. In particular the correspondence
between irreducible representations of $\overline{K} \Gamma_n$ and $H_n$ is canonical.
\end{theor}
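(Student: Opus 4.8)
The plan is to realize $H_n$ as the generic fibre of a flat family over the ring $A = \Q[a^{\pm 1}, b^{\pm 1}, c^{\pm 1}]$ and to run a Tits deformation argument linking it to the group algebra of $\Gamma_n$. Let $H_n^A$ denote the cubic Hecke algebra over $A$, and record two distinguished homomorphisms out of $A$: the inclusion $A \hookrightarrow F := \Q(a,b,c)$, which after the base change $F \hookrightarrow K \hookrightarrow \overline{K}$ (sending $(a,b,c)$ to the given algebraically independent triple) produces $\overline{K}H_n$; and the specialization $\theta_0 : A \to \Q(\omega)$ sending $(a,b,c)$ to the three cube roots of unity $(1,\omega,\omega^2)$. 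At $\theta_0$ the cubic polynomial becomes $X^3-1$, so the relation reads $s_i^3 = 1$ and $H_n^A \otimes_{\theta_0} \Q(\omega) = \Q(\omega)\Gamma_n$, which is split semisimple over $\overline{\Q}$ because $\Gamma_n$ is finite for $n \leq 5$ (Coxeter, \cite{COX}) and we work in characteristic $0$. The ring $A$ is integrally closed, so all hypotheses of the deformation machinery are in place once freeness is known.

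The argument rests on two pillars, the harder being a dimension bound: one must show that $H_n^A$ is free as an $A$-module of rank exactly $|\Gamma_n|$. The upper bound $\dim_F H_n^F \leq |\Gamma_n|$ should come from a spanning argument, since the cubic relation expresses $s_i^3$ and $s_i^{-1}$ as $A$-combinations of $1, s_i, s_i^2$; one then tries to rewrite every braid word onto a fixed transversal of $\Gamma_n$ in $B_n$. This is a rewriting-system problem whose confluence for $n=5$ is exactly the delicate point, as $|\Gamma_5|$ is large. This is the \emph{main obstacle}, and it is the computational heart of \cite{G32}; it is also the reason the statement is confined to $n \leq 5$, for $\Gamma_n$ is infinite when $n \geq 6$ and then $H_n$ is infinite-dimensional.

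For the matching lower bound and semisimplicity, I would deform each irreducible representation of $\Gamma_n$ (equivalently, each irreducible of $B_n$ annihilating $s_i^3-1$) into a representation of $\overline{K}H_n$ in which $s_i$ has eigenvalues $a,b,c$ rather than $1,\omega,\omega^2$, keeping the dimension constant. Producing enough pairwise non-isomorphic such deformations that $\sum_i d_i^2 = |\Gamma_n|$ forces $\dim_{\overline{K}} \overline{K}H_n \geq |\Gamma_n|$, hence equality with the upper bound, split semisimplicity, and an isomorphism $\overline{K}H_n \simeq \prod_i \mathrm{Mat}_{d_i}(\overline{K}) \simeq \overline{K}\Gamma_n$. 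Conceptually this is packaged by Tits' deformation theorem: since $H_n^A$ is free over $A$ and the fibre at $\theta_0$ is split semisimple, the discriminant of the symmetrizing form is a nonzero element of $A$, so the generic fibre is semisimple as well, and Tits supplies a dimension-preserving bijection between the simple modules of $\overline{\Q}\Gamma_n$ and those of $\overline{K}H_n$ whose character values are related by specialization. This bijection \emph{is} the canonical correspondence asserted in the statement.

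Finally, compatibility of the tower diagrams and uniqueness up to inner automorphism follow by induction on $n$ together with Skolem--Noether. Given an isomorphism $\phi_n : H_n \to \overline{K}\Gamma_n$ matching the canonical correspondence, one extends it to $\phi_{n+1}$ using that the inclusions $H_n \hookrightarrow H_{n+1}$ and $\overline{K}\Gamma_n \hookrightarrow \overline{K}\Gamma_{n+1}$ induce the same branching of simple modules, read off from the deformation; identifying the simple factors of the two split semisimple algebras then yields an isomorphism restricting to $\phi_n$. Any two isomorphisms of a product of matrix algebras inducing the same bijection of simple factors differ by conjugation by a unit, which is exactly Skolem--Noether, giving the uniqueness up to inner automorphism.
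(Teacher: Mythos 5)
The paper does not prove this theorem: it is recalled verbatim from \cite{G32}, and its proof there proceeds exactly along the lines you outline --- establishing that the cubic Hecke algebra over $A=\Q[a^{\pm1},b^{\pm1},c^{\pm1}]$ is a free $A$-module of rank $|\Gamma_n|$ for $n\leq 5$ (the computational heart, which you rightly identify as the main obstacle and defer to the citation), then running a Tits deformation argument at the specialization $(a,b,c)\mapsto(1,\omega,\omega^2)$, where the fibre becomes the split semisimple algebra $\overline{\Q}(\omega)\Gamma_n$ by Coxeter's finiteness theorem, and finally handling the tower compatibility and uniqueness up to inner automorphism via branching and Skolem--Noether. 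Your sketch is therefore correct at the level of strategy and coincides with the approach of the cited source; the one substantive ingredient not supplied --- the rank bound for $n=5$, where confluence of the rewriting onto a transversal of $\Gamma_5$ is genuinely delicate --- is precisely what \cite{G32} was written to establish.
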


A consequence of this theorem is that the algebras $H_n$ are semisimple,
and thus isomorphic (over $\overline{K}$) to a direct sum
of matrix algebras, each of the matrix algebras corresponding to
an irreductible character of $\Gamma_n$.

Note that, inside $LG_n(\alpha)$, $s_1$ acts as a conjugate of $\tau \exp h \Omega_{12}$, with $\tau, \Omega_{12}$
as in section \ref{sectVV}, and thus it acts semisimply with eigenvalues $b=\exp -2h \alpha^2, c=\exp -2h (\alpha+1)^2, a=- \exp -2h \alpha(\alpha+1)$.
For generic values, $1, \alpha$ and $\alpha^2$ are linearly independent over $\Q$, and thus these eigenvalues are
algebraically independent. We can thus 
consider $LG_n$ as a quotient of $H_n$, that is
$LG_n = H_n/\mathfrak{I}_n$ for an ideal $\mathfrak{I}_n$
over $H_n$. By the above theorem, $\mathfrak{I}_n$ is itself a sum of matrix
algebras, and is uniquely determined by the irreducible representations
of $H_n$ which are not annihilated by $\mathfrak{I}_n$.

Recall from \cite{G32} that $H_3 = H_2 + H_2 s_2 H_2 + H_2 s_2^{-1} H_2 + H_2 s_2 s_1^{-1} s_2$,
that is $H_3 = H_2 + H_2 s_2 H_2 + H_2 s_2^{-1} H_2 + K s_2 s_1^{-1} s_2 + K s_1 s_2 s_1^{-1} s_2+ K s_1^{-1} s_2 s_1^{-1} s_2$.
In particular a Markov trace on $H_3$ is uniquely determined by its value on $H_2$ and its value on 
$s_1^{-1} s_2 s_1^{-1} s_2$, since $tr((s_1 s_2 s_1^{-1}) s_2) = 
tr((s_2 s_1 s_2) s_1^{-1} ) = 
tr(s_1 s_2 s_1 s_1^{-1} ) = tr(s_1 s_2)$. Notice that the value on $s_1^{-1} s_2 s_1^{-1}
s_2$ is the value of the corresponding link invariant
on the figure-eight knot $4_1$.


\subsection{The cubic algebra on $3$ strands and Ishii's relations}

We recall from \cite{COX} that $\Gamma_3$ is a semidirect product $Q_8 \rtimes (\Z/3)$,
where $Q_8$ is a quaternion group of order $8$. Its deformation $H_3$ admits
\begin{itemize}
\item three 1-dimensional representations $S_x$ for $x \in \{a,b,c\}$,
defined by $s_i \mapsto x$. 
\item three 2-dimensional representations $T_{x,y}$
indexed by the subsets $\{x,y \} \subset \{ a,b,c \}$ of cardinality 2, 
\item one 3-dimensional representation $V$.
\end{itemize}
The spectrums of the generators are $\mathrm{Sp}(S_x(s_i)) = \{ x \}$,
$\mathrm{Sp}(T_{x,y}(s_i)) = \{ x,y \}$, $\mathrm{Sp}(V(s_i)) = \{ a,b,c \}$. In particular,
these irreducible representations of $B_3$ (or $H_3$) can be uniquely identified
through their restriction to $B_2$. One has matrix models
over $K$ of these irreducible representations (see below), hence $H_3$ is
split semisimple,
and we have 
$$H_3 \simeq K \times K \times K \times M_2(K) \times M_2(K) \times M_2(K) \times M_3(K)
$$
and every ideal of $H_3$ is a sum of some of these matrix algebras. For instance,
the ideal corresponding to the $BMW$ algebra has the form $K \times M_2(K)
\times M_3(K)$, whereas the ideal corresponding to the Funar-Bellingeri
quotient of \cite{BELLFUN} (see also \cite{CABANESMARIN}, \cite{G32}) is $Z(H_3) = K \times K \times K$. The mere fact that $\dim LG_3 = 20 = 24 - 2^2$
imposes that the ideal defining $LG_3$ as a quotient of $H_3$ is one of the $M_2(K)$,
precisely the only one which does not correspond to a representation of $LG_3$. 
From the Bratteli diagram of $LG_3(\alpha)$ we get that our choice
of parameters imposes that it is
$T_{b,c}$.
Since it is a simple ideal, it is clearly generated by either one of the non-trivial Ishii's relation
of \cite{Ishii3}. The fact
that at least one of Ishii's relation is non trivial in $H_3$ is easily checked,
and can be used to provide another quick proof of theorem 1 of \cite{Ishii3}.
We choose one of these non-trivial relations and call it $r_2$. 

\subsection{Description of the representations of $H_4$}

A description of the irreducible representations of $H_4$ can
be found in \cite{THESE}. We use the same notation here.
There are
\begin{itemize}
\item three 1-dimensional representations $S_x$ for $x \in \{a,b,c\}$,
defined by $s_i \mapsto x$. 
\item three 2-dimensional representations $T_{x,y}$
indexed by the subsets $\{x,y \} \subset \{ a,b,c \}$ of cardinality 2,
which factorize through the special morphism $B_4 \to B_3$ (hence through $H_3$).
\item one 3-dimensional representation $V$, factorizing through $B_3$.
\item six 3-dimensional representations $U_{x,y}$ for each tuple $(x,y)$
with $x \neq y$ and $x,y \in \{a,b,c \}$. 
\item six 6-dimensional representations $V_{x,y,z}$ for each tuple
$(x,y,z)$ with $\{x,y,z \} = \{ a,b,c \}$
\item three 8-dimensional representations $W_x$ for $x \in \{ a,b,c \}$
\item two 9-dimensional representations $X$, $X'$.
\end{itemize}

Except for $X,X'$, they are uniquely defined by their restriction
to $B_3$. If one let $S_x, T_{x,y}, V$ also denote the (irreducible) restriction
to $B_4$ of the representations $S_x, T_{x,y}, V$, we have
$$
\begin{array}{lcl}
\Res U_{x,y} & = & S_x + T_{x,y} \\
\Res V_{x,y,z} &=& S_x + T_{x,y} + V \\
\Res W_x &=& S_x + T_{x,y} + T_{x,z} + V \\  
\Res X &= & T_{x,y} + T_{x,z} + T_{y,z} + V
\end{array}
$$
The representations $T_{x,y}$ of $B_3$ are well-determined by their
restriction to $B_2$ : with obvious notations, $Res_{B_2} T_{x,y} = S_x + S_y$.

A complete set of matrices for these representations was first found by Brou\'e and Malle in \cite{BROUEMALLE}.
Other constructions were subsequently given, in \cite{THESE} and \cite{MALLEMICHEL}.
The latter ones have been included in the development version of the CHEVIE package for GAP3,
and the order in which they are stored in this package at the present time is
$S_a,S_c,S_b,T_{b,c}, T_{a,b}, T_{a,c},V$, $U_{b,a},U_{a,c},U_{c,b},U_{c,a},
U_{a,b}, U_{b,c}$,
$V_{c,a,b}, V_{b,c,a}, V_{a,b,c},V_{b,a,c},V_{c,b,a},
V_{a,c,b}, W_a, W_c, W_b, X , X'.
 $

For the convenience of the reader, we provide these models in tables \ref{tab:reps25a} and \ref{tab:reps25b}.

\begin{table}
$$
T_{b,c} : s_1 \mapsto \begin{pmatrix} b & 0 \\ bc & c \end{pmatrix} 
s_2 \mapsto \begin{pmatrix} c & -1 \\ 0 & b \end{pmatrix} 
s_3 \mapsto \begin{pmatrix} b & 0 \\ bc & c \end{pmatrix} 
$$
{}
$$
V : 
s_1 \mapsto \begin{pmatrix} c & 0 & 0  \\ ac+b^2 & b & 0 \\ b & 1 & a \end{pmatrix} 
s_2 \mapsto \begin{pmatrix} a & -1 & b  \\ 0 & b & -ac-b^2 \\ 0 & 0 & c \end{pmatrix} 
s_3 \mapsto \begin{pmatrix} c & 0 & 0  \\ ac+b^2 & b & 0 \\ b & 1 & a \end{pmatrix} 
$$
{}
$$
U_{a,b} : 
s_1 \mapsto \begin{pmatrix} b & 0 & 0  \\ 0 & a & 0 \\ a & 0 & a \end{pmatrix} 
s_2 \mapsto \begin{pmatrix} a & -a+b & -b  \\ 0 & b & -b \\ 0 & 0 & a \end{pmatrix} 
s_3 \mapsto \begin{pmatrix} a & 0 & 0  \\ 0 & a & 0 \\ -a & 2a & b \end{pmatrix} 
$$
\caption{Representations of $H_4$, first part.}
\label{tab:reps25a}
\end{table}
{}

\begin{table}
$
\begin{array}{c|c|c}
  V_{a,b,c} & W_a & X  \\
\hline
 \resizebox{4cm}{2cm}{$\begin{pmatrix} a & 0 & 0 & 0 & 0 & 0 \\ ac+b^2 & b & 0 & 0 &0 &0 \\ b & 1 & c &0&0&0 \\
0 & 0 & 0 & a & 0 & 0 \\ 0 & 0 & 0 & 0 & a & 0 \\ 0 & 0 & 0 & 1 & 0 & b \end{pmatrix}$} & \resizebox{5cm}{2cm}{$\begin{pmatrix}  b & 0 & 0 & 0 & 0 & 0 & 0 & 0 \\  1 & c & 0 & 0 & 0 & 0
 & 0 & 0 \\  c & ab+c^2 & a & 0 & 0 & 0 & 0 & 0 \\  0 & 0 & 0 & b & 0 & 0 
& 0 & 0 \\  -1 & -c & 0 & b & a & 0 & 0 & 0 \\  0 & 0 & 0 & 0 & 0 & c & 0 
& 0 \\  0 & 0 & 0 & b & 0 & -1 & a & 0 \\  0 & 0 & 0 & 0 & 0 & b-c & 0 & a
 \\ \end{pmatrix}$} & \resizebox{5cm}{2cm}{$\begin{pmatrix}  c & 0 & 0 & 0 & 0 & 0 & 0 & 0 & 0 \\  ac+b^2 & b & 0 & 
0 & 0 & 0 & -j^2bc & 0 & 0 \\  b & 1 & a & 0 & 0 & 0 & c & 0 & 0 \\  0 & 
0 & 0 & a & 0 & 0 & -c & jc & a+j^2b \\  j^2a-b & 0 & 0 & 0 & b & 0 & 0 &
 0 & 0 \\  j^2a & 0 & 0 & 0 & b & a & 0 & 0 & 0 \\  0 & 0 & 0 & 0 & 0 & 0
 & c & 0 & 0 \\  0 & 0 & 0 & 0 & 0 & 0 & 0 & c & 0 \\  0 & 0 & 0 & 0 & 0 &
 0 & 0 & j^2c & b \\ \end{pmatrix}$}  \\
 \resizebox{4cm}{2cm}{$\begin{pmatrix} c & -1 & b & -1 & -b & a  \\ 0 & b & -ac-b^2 & 0 & 0 & -ab \\ 
0 & 0 & a & 0 & 0 & 0 \\ 0 & 0 & 0 & b & ac + b^2 & -ab \\ 0 & 0 & 0 & 0 & a & 0 \\
0 & 0 & 0 & 0 & 0 & a \end{pmatrix}$}
 &\resizebox{5cm}{2cm}{$\begin{pmatrix}  a & -ab-c^2 & c & 0 & 0 & 0 & 0 & 0 \\  0 & c & -1 & 0 
& 0 & 0 & 0 & 0 \\  0 & 0 & b & 0 & 0 & 0 & 0 & 0 \\  0 & -a & 0 & a & -a 
& 0 & 0 & 0 \\  0 & 0 & 0 & 0 & b & 0 & 0 & 0 \\  0 & 0 & -a & 0 & -ac & a
 & ac & 0 \\  0 & 0 & -1 & 0 & b-c & 0 & c & 0 \\  0 & 0 & -c & 0 & bc-c^2
 & 0 & ab+c^2 & a \\ \end{pmatrix}$} & \resizebox{5cm}{2cm}{$\begin{pmatrix}  a & -1 & b & -jb & 0 & 0 & 0 & 0 & b \\  0 & b & -ac-b
^2 & -ac+jb^2 & 0 & 0 & 0 & 0 & -ac-b^2 \\  0 & 0 & c & 0 & 0 & 0 & 0 & 0 &
 0 \\  0 & 0 & 0 & c & 0 & 0 & 0 & 0 & 0 \\  0 & 0 & a & a & a & -a & 0 & 
0 & 0 \\  0 & 0 & jb & 0 & 0 & b & 0 & 0 & 0 \\  0 & 0 & 0 & a & 0 & 0 & 
a & a & a \\  0 & 0 & 0 & 0 & 0 & 0 & 0 & b & -jb \\  0 & 0 & 0 & 0 & 0 &
 0 & 0 & 0 & c \\ \end{pmatrix}$}  \\
 \resizebox{4cm}{2cm}{$\begin{pmatrix} a & 0 & 0 & 0 & 0 & 0   \\ 0 & a & 0 & 0 & 0 & 0 \\
0 & 0 & a & 0 & 0 & 0 \\ ac+b^2 & 0 & 0 & b & 0 & 0 \\ -b & 0 & 0 & -1 & c & 0 \\
0 & 1 & 0 & 0 & 0 & b
\end{pmatrix}$} &\resizebox{5cm}{2cm}{$\begin{pmatrix} a & 0 & 0 & bc-c^2 & 0 & 0 & 0 & 0 \\  0 & a & 0 & c & 
0 & -1 & 0 & 0 \\  0 & 0 & a & 0 & 0 & b-c & ac+c^2 & -c \\  0 & 0 & 0 & c
 & 0 & 0 & 0 & 0 \\  0 & 0 & 0 & 0 & a & 1 & -a & 0 \\  0 & 0 & 0 & 0 & 0 
& b & 0 & 0 \\  0 & 0 & 0 & 0 & 0 & 0 & c & -1 \\  0 & 0 & 0 & 0 & 0 & 0 &
 0 & b \\ \end{pmatrix}$} & \resizebox{5cm}{2cm}{$\begin{pmatrix} c & 0 & 0 & 0 & 0 & 0 & 0 & 0 & 0 \\  ac+b^2 & b & 0 & 
0 & -j^2bc & 0 & 0 & 0 & 0 \\  0 & 0 & b & 0 & 0 & -j^2c & 0 & 0 & 0 \\ 
 0 & 0 & a+j^2b & a & -c & -jc & 0 & 0 & 0 \\  0 & 0 & 0 & 0 & c & 0 & 0 &
 0 & 0 \\  0 & 0 & 0 & 0 & 0 & c & 0 & 0 & 0 \\  j^2a-b & 0 & 0 & 0 & 0 &
 0 & b & 0 & 0 \\  -j^2a & 0 & 0 & 0 & 0 & 0 & -b & a & 0 \\  b & 1 & 0 &
 0 & c & 0 & 0 & 0 & a \\ \end{pmatrix}$}  \\
 \end{array}
 $
 \begin{center}
 Images of $s_1,s_2,s_3$ (from top to bottom) under $V_{a,b,c}$, $W_a$ and $X$. \\
 In these formulas, $j$ denotes  a third root of 1.
 \end{center}
\caption{Representations of $H_4$, second part. }
\label{tab:reps25b}
\end{table}

\subsection{Representations of $H_4$ which factorize through $LG_4$}
\label{soussectchar}

Since $LG_3$ is the quotient of $H_3$ by the ideal corresponding to its
representation $T_{b,c}$,
It follows that the quotient of the group algebra of $B_4$
by the cubic relation and $(r_2)$ can be identified with
the quotient of $H_4$
by the ideal $J$ corresponding to
all irreducible representations whose restriction to $H_3$
contains an irreducible component of type $T_{b,c}$. Viewed the other
way round, this proves that the quotient algebra 
$H_4/J$
is a semisimple
algebra whose irreducible representations are the
irreducible representations of $H_4$ whose restriction to
$H_3$
do \emph{not} contain an irreducible component of type $T_{b,c}$.

From the description of the branching rule we get that these
representations are the $S_x$ for $x \in \{a,b,c \}$, $T_{a,b}$, $T_{a,c}$,
$V$, $U_{a,b}$, $U_{b,a}$, $U_{a,c}$, $U_{c,a}$,
$V_{a,b,c}$, $V_{b,a,c}$, $V_{a,c,b}$, $V_{c,a,b}$, $W_a$,
hence the corresponding quotient has dimension $1^2 \times 3 + 2^2 \times 2 + 3^2+ 3^2 \times 4+6^2 \times4+8^2 \times 1 =264$.
From the Bratteli diagrams of section \ref{sectstruct} we can compute $\dim LG_n$ for
arbitrary $n$, and we get that $LG_4$ has dimension 175. This proves that
\emph{Ishii's relations are not sufficient} to define $LG_n$. More precisely :

\begin{prop} The quotient of $K B_4$ by a generic cubic relation $r_1$ and
Ishii's relation $r_2$ has dimension
$\dim K B_4/(r_1,r_2) = \dim H_4/(J) = 264 > \dim LG_4 = 175$.
\end{prop}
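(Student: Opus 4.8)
The plan is to reduce everything to the structure of the split semisimple algebra $H_4$, in which two-sided ideals are completely transparent. First I would record that $K B_4/(r_1) = H_4$ by the very definition of the cubic Hecke algebra, so that $K B_4/(r_1,r_2) = H_4/(r_2)$, where now $(r_2)$ denotes the two-sided ideal of $H_4$ generated by the image of Ishii's relation under the inclusion $H_3 \into H_4$. Thus the proposition splits into two independent tasks: identifying the ideal $(r_2)$ with $J$, and then a dimension count.

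The key step is the identification $(r_2) = J$. Since $H_4$ is split semisimple (as recalled in Section \ref{secthecke}), every two-sided ideal is the direct sum of a subset of its simple matrix blocks, and the ideal generated by an element $x$ is exactly the sum of the blocks $B_\pi$, indexed by the irreducible representations $\pi$, on which $\pi(x) \neq 0$. Now $r_2$ lies in the subalgebra $H_3$, where by its very choice it generates the block corresponding to $T_{b,c}$; equivalently, for an irreducible $H_3$-representation $\rho$ one has $\rho(r_2) \neq 0$ if and only if $\rho \simeq T_{b,c}$. Because $H_3$ is semisimple, the restriction $\pi|_{H_3}$ of any irreducible $H_4$-representation $\pi$ is a genuine direct sum of irreducibles on which $r_2 \in H_3$ acts block-diagonally; hence $\pi(r_2) \neq 0$ precisely when $T_{b,c}$ occurs in $\pi|_{H_3}$. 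This is exactly the condition defining the blocks that make up $J$, so $(r_2) = J$ and $H_4/(r_2) = H_4/J$.

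It then remains to count dimensions, which is pure bookkeeping with the data already tabulated. Using the branching rules $\Res U_{x,y}$, $\Res V_{x,y,z}$, $\Res W_x$ and $\Res X$ recorded above, I would list the irreducible representations of $H_4$ whose restriction to $H_3$ avoids $T_{b,c}$ --- namely the three $S_x$, together with $T_{a,b}$, $T_{a,c}$, $V$, the four representations $U_{a,b}, U_{b,a}, U_{a,c}, U_{c,a}$, the four $V_{a,b,c}, V_{b,a,c}, V_{a,c,b}, V_{c,a,b}$, and $W_a$ --- and sum the squares of their dimensions, obtaining $\dim H_4/J = 1^2\times 3 + 2^2\times 2 + 3^2 + 3^2\times 4 + 6^2\times 4 + 8^2 = 3+8+9+36+144+64 = 264$. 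Finally, reading $\dim LG_4 = 175$ off the Bratteli diagram of Section \ref{sectstruct} yields $264 > 175$.

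The only genuinely delicate point is the block-diagonality assertion in the middle paragraph: one must be certain that $r_2$, sitting in $H_3$, cannot act nontrivially on an $H_4$-irreducible unless a $T_{b,c}$-component is actually present in its restriction, and conversely that a single such component forces $\pi(r_2)\neq 0$. This is exactly where the semisimplicity of $H_3$ is essential, guaranteeing that $\pi|_{H_3}$ is a true direct sum rather than merely a filtered module; once this is in place, the remaining inequality is an immediate consequence of the branching rules and the Bratteli computation of $\dim LG_4$.
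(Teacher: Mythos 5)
Your proof is correct and follows the paper's argument exactly: identify the ideal generated by $r_2$ in $H_4$ with $J$ via semisimplicity and the branching rules, list the representations whose restriction to $H_3$ avoids $T_{b,c}$, sum the squares of their dimensions to get $264$, and read $\dim LG_4 = 175$ off the Bratteli diagram. The only difference is that you spell out the block-diagonality justification for $(r_2) = J$, which the paper asserts without comment; this is a welcome but inessential elaboration of the same route.
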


\begin{remark} {\ } \\
\begin{enumerate}
\item The following argument, by contradiction, could alternatively be used to show that $K B_4/(J) \neq LG_4 $. Up to some
permutation of the parameters, $BMW_3$ is a quotient of $LG_3$. Since the relations of $BMW_n$ are on 3 strands, 
this implies that $BMW_4$ is a quotient of $KB_4/(J)$. But it is not a quotient of $LG_4$, and actually even the (ordinary, quadratic) Hecke algebra on
4 strands is not a quotient of $LG_4$ ; this can indeed be inferred from the fact that the Bratteli diagram for the representation of the Hecke
algebra traditionnally indexed by the partition $[2,2]$ does not appear as a sub-diagram inside the Brattelli diagram of $LG_4$.
\item By the same method, one can get the dimension of $K B_5/(r_1,r_2)$. It is $6490$.
\end{enumerate}\end{remark}

We now explain how to get a matrix description of $LG_4$. Since
each irreducible representation of $H_4$
is well-determined from
its restriction to $H_3$,
the Bratteli diagram obtained before
determines the irreducible representation of $LG_4$ ; they
are the
$S_a, S_b, S_c$,
$U_{b,a}, U_{a,c},
U_{c,a},U_{a,b}, V_{c,a,b}, V_{b,a,c}, W_a$.
This provides an explicit morphism 
$\Phi : K B_4 \onto K^3  \times M_3(K)^4 \times 
M_6(K)^2 \times M_8(K) \simeq K^{175}$
factorizing though $H_4$
whose image can be identified with $LG_4$.

\xymatrix{
 &  &  & 1\ar[d] \\
 &  &  & 1 \ar[dl] \ar[d] \ar[dr] \\
 & & S_c \ar[dl] \ar[drr] \ar[d] &  S_b \ar[d] \ar[dl] \ar[drr] & S_a \ar[dll]\ar[d]\ar[dr] \ar[drr] \\
 & S_c \ar[drrr] \ar[dl] \ar[d] & V \ar[dl] \ar[d] \ar[drrr] &  S_b \ar[dl] \ar[d] \ar[drrr] & T_{a,c} \ar[dlll] \ar[d] \ar[dr] \ar[drrr] & T_{a,b}  \ar[dlll] \ar[d] \ar[dr] \ar[drrr] &S_a \ar[dl] \ar[dr] \ar[drr] \ar[drrr] \\
  S_c & V_{c,a,b}   & V_{b,a,c}  & S_b  & U_{c,a}  & W_a  & U_{b,a}   & U_{a,c} 
 & U_{a,b} 
 & S_a  \\
}

In the forecoming section we will use this matrix description to get a inductive properties of this algebra. Before
that, we conclude the present section by describing the part of $LG_n$ which factorizes through ordinary quadratic Hecke
algebras.

There is clearly one quotient of $LG_n$ which factors through a quadratic Hecke algebra for each
irreducible component $[u,v]_2$ of $V(0,\alpha)^{\otimes 2}$ : if $\langle u,v \rangle_2$ denotes
the simple ideal of $LG_2$ corresponding to it, then $LG_n/\langle u,v \rangle_2$ is a quotient
of a quadratic Hecke algebra. Any quotient of a quadratic (generic) Hecke algebra is uniquely determined by its
irreducible representations, namely the irreducible representations of the Hecke algebra (indexed as usual by
partitions of $n$) which factor through it. It follows that  this quotient can be uniquely identified by its Bratteli diagram, which is
the part of the Bratteli diagram of $LG_n$ made of the paths which do \emph{not} pass through $[u,v]_2$. 
When $[u,v]_2 = [1,0]_2$ we easily get from this description that the irreducible representations of $LG_n/ \langle 1, 0 \rangle_2$ are the 1-dimensional
representations corresponding to $[0,0]_n$ and $[0,n-1]_n$, hence $LG_n/ \langle 1,0 \rangle_2 \simeq K^2$. Under
the correspondence between irreducible representations of the quadratic Hecke algebras and partitions,
which is canonical up to the operation of transposing all Young diagrams at the same time,
we have for instance $[0,0]_n \mapsto [n]$,  $[0,n-1]_n \mapsto [1^n]$. When $[u,v]_2 = [0,0]_2$, we get that
the irreducible representations of $LG_n/ \langle 0,0 \rangle_2$ are the $[n-m,m]_n$ for $1 \leq m \leq n$,
and that the correspondence with partitions can be chosen to be $[n-m,m]_n \mapsto [n-m+1,1^{m-1}]$.
The situation is similar with $[u,v]_2 = [0,1]_2$, the correspondence being given by $[m,0]_n \mapsto [n-m,1^m]$. Since
the quotient of the quadratic Hecke algebra whose irreductible representations are indexed by
hook partitions, which is a centralizer algebra for $\mathfrak{gl}(1|1)$ inside $\End(U^{\otimes n})$
with $U$ its standard 2-dimensional representation (see e.g.\cite{PLANAR}), is a defining algebra
for the Alexander polynomial (see \cite{ROZ,VIR}), this gives another explanation for the well-known connection between
the Links-Gould and Alexander polynomials (see \cite{ISHIIALEX,PATGEER}).

\section{Inductive description of $LG_n$}

We investigate the image of braid words in the algebras $LG_3$ and $LG_4$. We first determine a basis made of braid
words for $LG_3$ (section \ref{sect6LG3}), then of $LG_4$ (section \ref{sect6LG4}). This first basis enables us to get a new relation in $LG_4$.
We now start again from the basis of $LG_3$ obtained earlier and get in section \ref{sect6LG4bis} a new basis which is a more
suitable for induction. We use it to prove a decomposition of $LG_4$ as a $LG_3$-bimodule, and then more generally,
in section \ref{sect6bimod}, a decomposition of $LG_{n+1}$ as a $LG_n$-bimodule.

\subsection{A basis for $n=3$}
\label{sect6LG3}

The braid words with at most 3 crossings and avoiding the
pattern $s_i^r$ for $|r| \geq 2$ are $1$, $s_1^{\pm 1}$, $s_2^{\pm 1}$,
$s_1^{\pm 1} s_2^{\pm 1}$, $s_2^{\pm 1} s_1^{\pm 1}$, $s_1^{\pm 1} s_2^{\pm 1}
s_1^{\pm 1}$, $s_2^{\pm 1} s_1^{\pm 1} s_2^{\pm 1}$,
that is  $29$ words, whose image span $LG_3$. Among them, there
are 13 words ($13 = 1 + 4 + 2 \times 2 + 2 \times 2$) with at most two crossings.
These 13 words have linearly independent images in $LG_3$.
We denote $\mathcal{B}_0 = \mathcal{B}_0^{(3)}$ these images, $V_0$ the subspace of $LG_3$
that they span (see table \ref{tab:b3twocross}).
$$\mathcal{B}_0 = [1, s_1, s_1^{-1}, s_2, s_2^{-1}, s_1s_2, s_1s_2^{-1}, 
  s_1^{-1}s_2, s_1^{-1}s_2^{-1}, s_2s_1, s_2s_1^{-1}, s_2^{-1}s_1, 
  s_2^{-1}s_1^{-1}]$$
  

We let $V_1$ denote the span of the 8 words of the
form $s_1^{\pm 1} s_2^{\pm 1} s_1^{\pm 1}$. We have $\dim V_0 + V_1 = 19$.
Somewhat arbitrarily, we choose for basis of $LG_3$ the following set (see table \ref{tab:baseBpour3})
$$
\mathcal{B}^{(3)} = \mathcal{B} = \mathcal{B}_0 \sqcup   [  s_1^{-1}s_2^{-1}s_1^{-1}, s_1^{-1}s_2^{-1}s_1, 
  s_1^{-1}s_2s_1^{-1}, s_1s_2^{-1}s_1^{-1}, s_1s_2^{-1}s_1, 
  s_1s_2s_1^{-1}, s_2^{-1}s_1s_2^{-1} ];
$$

\begin{table}
\hrule

\begin{tikzpicture}
\braid[braid colour=red,strands=3,braid start={(0,0)}]%
{ \dsigma _1  }
\node[font=\Huge] at (3.5,-1) {\(,\)};
\braid[braid colour = red,strands=3,braid start={(3.5,0)}]
{\dsigma_1^{-1} }
\node[font=\Huge] at (7,-1) {\(,\)};
\braid[braid colour = red,strands=3,braid start={(7,0)}]
{\dsigma_2 }
\node[font=\Huge] at (10.5,-1) {\(,\)};
\braid[braid colour = red,strands=3,braid start={(10.5,0)}]
{\dsigma_2^{-1} }
\node[font=\Huge] at (14,-1) {\(,\)};
\end{tikzpicture}

\hrule

\begin{tikzpicture}
\braid[braid colour=red,strands=3,braid start={(0,0)}]%
{ \dsigma _1  \dsigma_2}
\node[font=\Huge] at (3.5,-1) {\(,\)};
\braid[braid colour = red,strands=3,braid start={(3.5,0)}]
{\dsigma_1 \dsigma_2^{-1} }
\node[font=\Huge] at (7,-1) {\(,\)};
\braid[braid colour = red,strands=3,braid start={(7,0)}]
{\dsigma_1^{-1} \dsigma_2 }
\node[font=\Huge] at (10.5,-1) {\(,\)};
\braid[braid colour = red,strands=3,braid start={(10.5,0)}]
{\dsigma_1^{-1} \dsigma_2^{-1} }
\node[font=\Huge] at (14,-1) {\(,\)};
\end{tikzpicture}

\hrule

\begin{tikzpicture}
\braid[braid colour=red,strands=3,braid start={(0,0)}]%
{ \dsigma _2  \dsigma_1}
\node[font=\Huge] at (3.5,-1) {\(,\)};
\braid[braid colour = red,strands=3,braid start={(3.5,0)}]
{\dsigma_2 \dsigma_1^{-1} }
\node[font=\Huge] at (7,-1) {\(,\)};
\braid[braid colour = red,strands=3,braid start={(7,0)}]
{\dsigma_2^{-1} \dsigma_1 }
\node[font=\Huge] at (10.5,-1) {\(,\)};
\braid[braid colour = red,strands=3,braid start={(10.5,0)}]
{\dsigma_2^{-1} \dsigma_1^{-1} }
\node[font=\Huge] at (14,-1) {};
\end{tikzpicture}

\hrule
\caption{$\mathcal{B}_0$ : 3-braids with at most 2 crossings.}
\label{tab:b3twocross}
\end{table}

\begin{table}
\resizebox{15cm}{!}{
\begin{tikzpicture}
\braid[braid colour=red,strands=3,braid start={(0,0)}]%
{ \dsigma _1^{-1} \dsigma_2^{-1} \dsigma_1^{-1}   }
\node[font=\Huge] at (3.5,-1) {\(,\)};
\braid[braid colour = red,strands=3,braid start={(3.5,0)}]
{\dsigma_1^{-1}  \dsigma_2^{-1} \dsigma_1 }
\node[font=\Huge] at (7,-1) {\(,\)};
\braid[braid colour = red,strands=3,braid start={(7,0)}]
{\dsigma_1^{-1} \dsigma_2 \dsigma_1^{-1} }
\node[font=\Huge] at (10.5,-1) {\(,\)};
\braid[braid colour=red,strands=3,braid start={(10.5,0)}]%
{ \dsigma _1 \dsigma_2^{-1} \dsigma_1^{-1}   }
\node[font=\Huge] at (14,-1) {\(,\)};
\braid[braid colour = red,strands=3,braid start={(14,0)}]
{\dsigma_1  \dsigma_2^{-1} \dsigma_1 }
\node[font=\Huge] at (17.5,-1) {\(,\)};
\braid[braid colour = red,strands=3,braid start={(17.5,0)}]
{\dsigma_1  \dsigma_2 \dsigma_1^{-1} }
\node[font=\Huge] at (21,-1) {\(,\)};
\braid[braid colour=blue,strands=3,braid start={(21,0)}]%
{ \dsigma _2^{-1}  \dsigma_1  \dsigma_2^{-1}   }
\end{tikzpicture}}
\caption{The $6+1$ braid words added to $\mathcal{B}_0$ to form $\mathcal{B}$}
\label{tab:baseBpour3}
\end{table}

This basis is
made of all possible braid words with at most 2 crossings
with in addition 6 of the 8 words of the form $s_1^{\pm 1} s_2^{\pm 1} s_1^{\pm 1}$
(the words $s_1 s_2 s_1$ and $s_1^{-1} s_2 s_1$ are removed), which together
form a basis of $V_0 + V_1$,
and one of the form $s_2^{\pm 1} s_1^{\pm 1} s_2^{\pm 1}$. 
Since, according to \cite{G32}, the words $s_1^{\pm} s_2^{\pm} s_1^{\pm}$ are linearly
independent in $H_3$,
there should be a relation 
arising from the expression of
$s_1s_2s_1$ and  $s_1^{-1} s_2 s_1$.

We get the following :

$$
\begin{array}{lcl}
s_1^{-1} s_2 s_1 &=& 
\frac{-1}{a}  s_1 s_2  
+ a s_1 s_2^{-1}  + a s_1^{-1} s_2  - 
a^3  s_1^{-1} s_2^{-1}  +a^{-1} s_2 s_1  - a s_2 s_1^{-1}  -
a s_2^{-1} s_1  +  a^3 s_2^{-1} s_1^{-1} \\
& &  + a^2 s_1^{-1} 
 s_2^{-1} s_1  - a^2 s_1 s_2^{-1} s_1^{-1}  +  s_1 s_2 s_1^{-1}
 \end{array}
$$
so, in particular,
$
s_1^{-1} s_2 s_1 \equiv  s_1 s_2 s_1^{-1}  +  a^2 (s_1^{-1} 
 s_2^{-1} s_1  -  s_1 s_2^{-1} s_1^{-1})   \mod V_0
$,
that is
\begin{center}
\resizebox{!}{2cm}{
\begin{tikzpicture}
\braid[braid colour=red,strands=3,braid start={(0,0)}]%
{ \dsigma _1 ^{-1} \dsigma_2 \dsigma_1  }
\node[font=\Huge] at (3.5,-1) {\(\equiv \)};
\braid[braid colour = red,strands=3,braid start={(3.5,0)}]
{\dsigma_1 \dsigma_2 \dsigma_1^{-1} }
\node[font=\Huge] at (7.5,-1) {\( + a^2  \)};
\braid[braid colour = red,strands=3,braid start={(7.5,0)}]
{\dsigma_1^{-1} \dsigma_2^{-1} \dsigma_1}
\node[font=\Huge] at (11.5,-1) {\( - a^2  \)};
\braid[braid colour = red,strands=3,braid start={(11.5,0)}]
{\dsigma_1 \dsigma_2^{-1} \dsigma_1^{-1}}

\end{tikzpicture}} 
\end{center}
We also have
$$
\begin{array}{lcl}
s_1 s_2 s_1 &=& 
\left( -ab-ac-bc-a^2\right) s_2  + \left( a^2bc+a^3b+a^3c+a^4\right) s_2^{-1}  + \left( a+b+c\right) s_1 s_2  \\
& & + \left( -a^2b-a^2c-a^3\right) s_1 s_2^{-1}  + \left( abc\right) s_1^{-1} s_2  +
 \left( -a^3bc\right) s_1^{-1} s_2^{-1}  + \left( a\right) s_2 s_1  \\
 & & + \left( abc+a^2b+a^2c\right) s_2 s_1^{-1}  + \left( -a^3\right) s_2^{-1} s_1  + \left( -a^3bc-a^4b-a^4c\right) s_2^{-1} 
s_1^{-1} \\
& &  + \left( a^4bc\right) s_1^{-1} s_2^{-1} s_1^{-1}  + \left( -a^2bc
\right) s_1^{-1} s_2 s_1^{-1}  + \left( a^3b+a^3c\right) s_1 s_2^{-1} s_1^{-1} \\
& & 
  + \left( a^2\right) s_1 s_2^{-1} s_1  + \left( -ab-ac\right) s_1 s_2 s_1^{\
-1} 
\end{array}
$$
hence
$$
s_1 s_2 s_1 \equiv 
  a^3(abc) s_1^{-1} s_2^{-1} s_1^{-1}  - a(abc)
s_1^{-1} s_2 s_1^{-1}  +a^3(b+c) s_1 s_2^{-1} s_1^{-1} \\
  +  a^2 s_1 s_2^{-1} s_1  -a(b+c)  s_1 s_2 s_1^{\
-1}  \mod V_0
$$

\subsection{Case $n=4$ : new relations over 4 crossings}
\label{sect6LG4}

\begin{table}
\begin{center}
\resizebox{!}{6cm}{\includegraphics{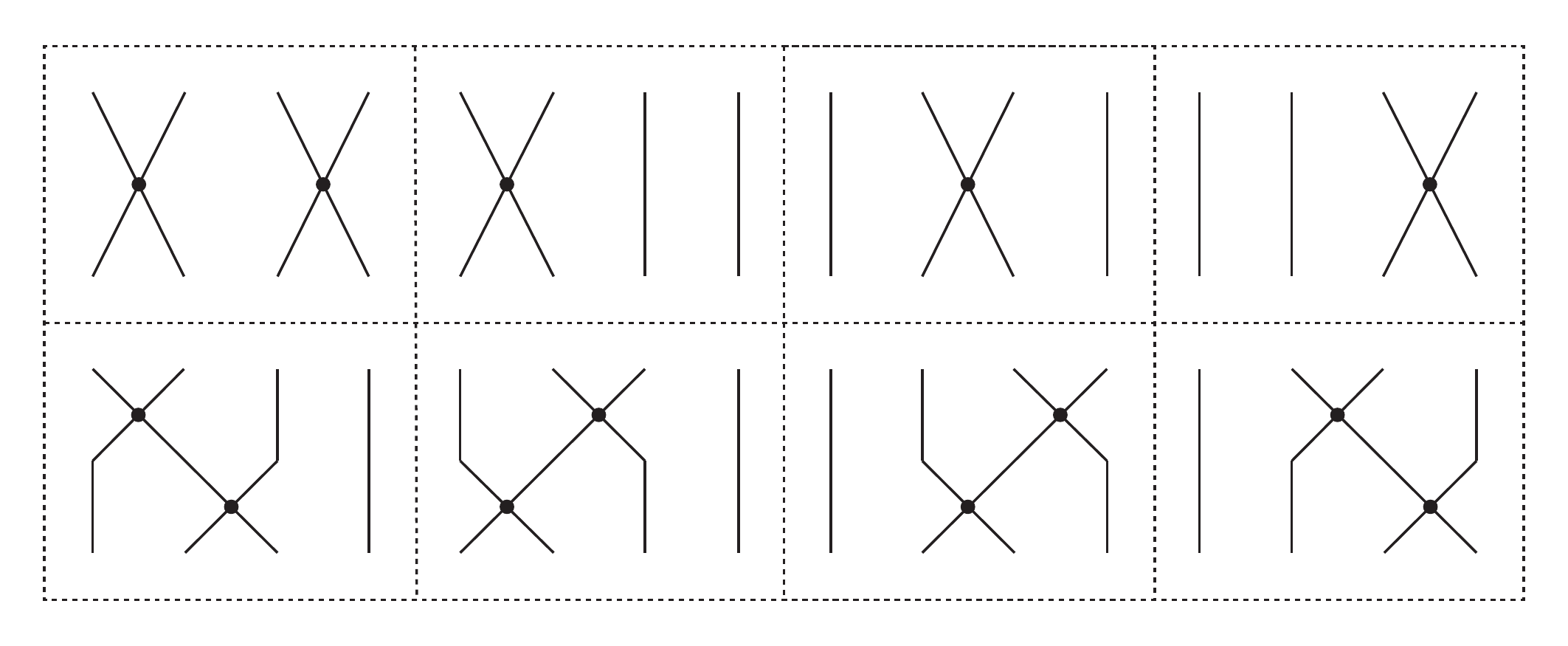}}
\caption{Patterns for 1 and 2 crossings, on 4 strands}
\label{tab:B04}
\end{center}
\end{table}

We denote $\mathcal{B}_0^{(4)}$  the set of $1 + 2 \times 3 + 2^2 \times 5 = 27$ braids with at most 2 crossings,
which correspond to the patterns described in table \ref{tab:B04} (plus the trivial braid, with 0 crossings),
$\mathcal{B}_1^{(4)}$ the set of braids with 3 crossings described by the patterns of table
\ref{tab:B14}. Because of the study of $LG_3$ and of
the cubic relation, the image in $LG_4$ of every braid with at most 3 crossings
can be written as a linear combination of $\mathcal{B}_0^{(4)} \sqcup \mathcal{B}_1^{(4)}$.
We check that these $8 \times 4 + 2 \times 7 + 27 =  73$ elements are indeed linearly independent in $LG_4$, and we
let $V_3$ denote the subspace they span.

\begin{table}
\begin{center}
\resizebox{!}{6cm}{\includegraphics{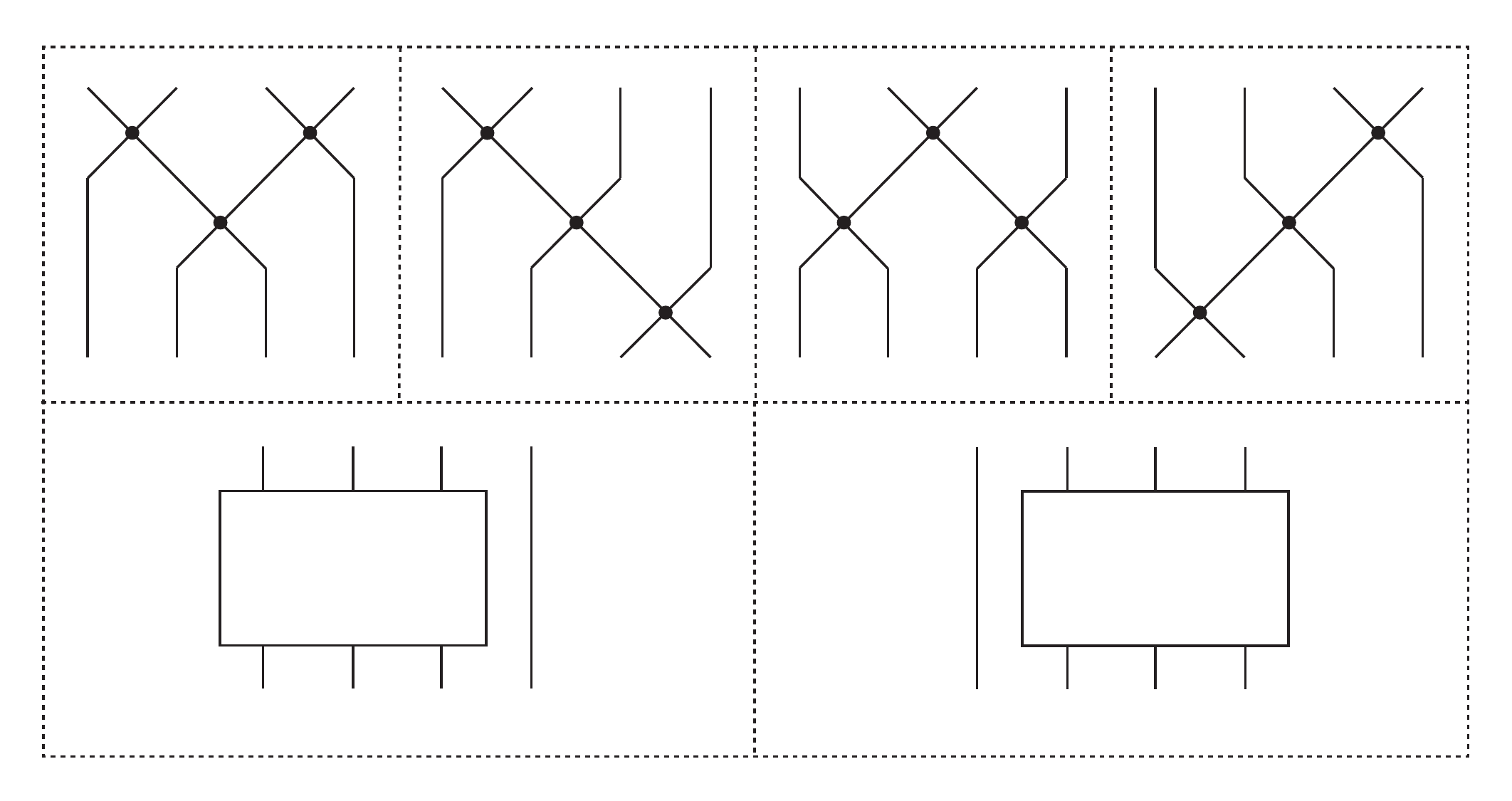}}
\end{center}
\begin{center}
The box represents arbitrary elements of $\mathcal{B}^{(3)}\setminus \mathcal{B}_0^{(3)}$.
\end{center}
\caption{Patterns for 3 crossings, on 4 strands. }
\label{tab:B14}
\end{table}

\begin{table}
\begin{center}
\resizebox{16cm}{!}{\includegraphics{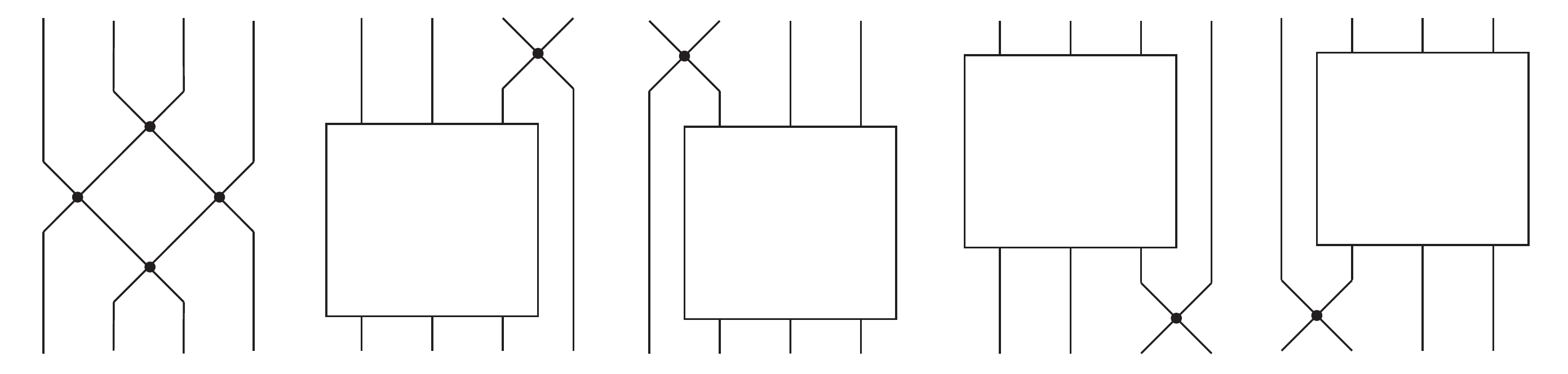}}
\end{center}
\begin{center}
The box represents arbitrary elements of $\mathcal{B}^{(3)}\setminus \mathcal{B}_0^{(3)}$.
\end{center}
\caption{Patterns for 4 crossings, on 4 strands. }
\label{tab:B24}
\end{table}

We denote $\mathcal{B}_2^{(4)}$  the set of $2^4 + 7 \times 2 \times 4 = 72$ braids with 4 crossings,
which correspond to the patterns described in table \ref{tab:B24}. Their images in $LG_4$
are linearly independent, that is they span a subspace $V_4$ of dimension $72$. We
get $\dim (V_3 + V_4) = 141 < 145 = 72+73$.

Among these 145 words in $s_i$ and $s_i^{-1}$, only 5 of them
contain a pattern $s_3^{\pm} s_2^{\pm} s_3^{\pm}$. Besides $s_3^{-1} s_2 s_3^{-1}$, they
are the $\mathcal{A}_{3,U}^{(4)} = \{ s_1^{\eps} s_3^{-1} s_2 s_3^{-1} \ | \ \eps \in \{ -1 , 1 \}\}$
and 
$\mathcal{A}_{3,D}^{(4)} = \{  s_3^{-1} s_2 s_3^{-1} s_1^{\eps} \ | \ \eps \in \{ -1 , 1 \}\}$.

Let $\mathcal{B}_3^{(4)} = (\mathcal{B}^{(4)}_0 \sqcup \mathcal{B}^{(4)}_1 \sqcup
\mathcal{B}^{(4)}_2) \setminus (\mathcal{A}^{(4)}_{3,D} \cup \mathcal{A}^{(4)}_{3,U})$,
and $V_4^{(0)}$ the subspace of $LG_4$ spanned by its image.
We let $\mathcal{B}_{3,D}^{(4)} = \mathcal{B}_3^{(4)} \sqcup
\mathcal{A}_{3,D}^{(4)}$,
$\mathcal{B}_{3,U}^{(4)} = \mathcal{B}_3^{(4)} \sqcup
\mathcal{A}_{3,U}^{(4)}$, and $V_4^{(D)}$, $V_4^{(U)}$ the
subspace spanned by their images, respectively.

We have $ |\mathcal{B}_3^{(4)}| = 141$ and $\dim V_4^{(0)} = 139$. Moreover,
we get $ \dim V_4^{(D)}= \dim V_4^{(U)} = 141$ hence $V_4^{(D)} = V_4^{(U)} = 
V_3+V_4$. We choose for basis of this subspace a rather arbitrary
subset of $\mathcal{B}^{(4)}_{3,D}$ of size $141$, namely $\mathcal{B}^{(4)}_{4} = \mathcal{B}^{(4)}_{3,D} \setminus
\{ s_1^{-1} s_2^{-1} s_3^{-1} s_2^{-1}, s_1 s_2^{-1} s_3^{-1} s_2^{-1} \}$,
which we check to be linearly independent.

In terms of the combinatorics of words, this enables one to pass a $s_1^{\pm}$
from over to under a pattern $s_3^{\pm}s_2^{\pm}s_3^{\pm}$, modulo terms
containing no more than one $s_3^{\pm  }$.

\begin{center}
\resizebox{!}{4cm}{\includegraphics{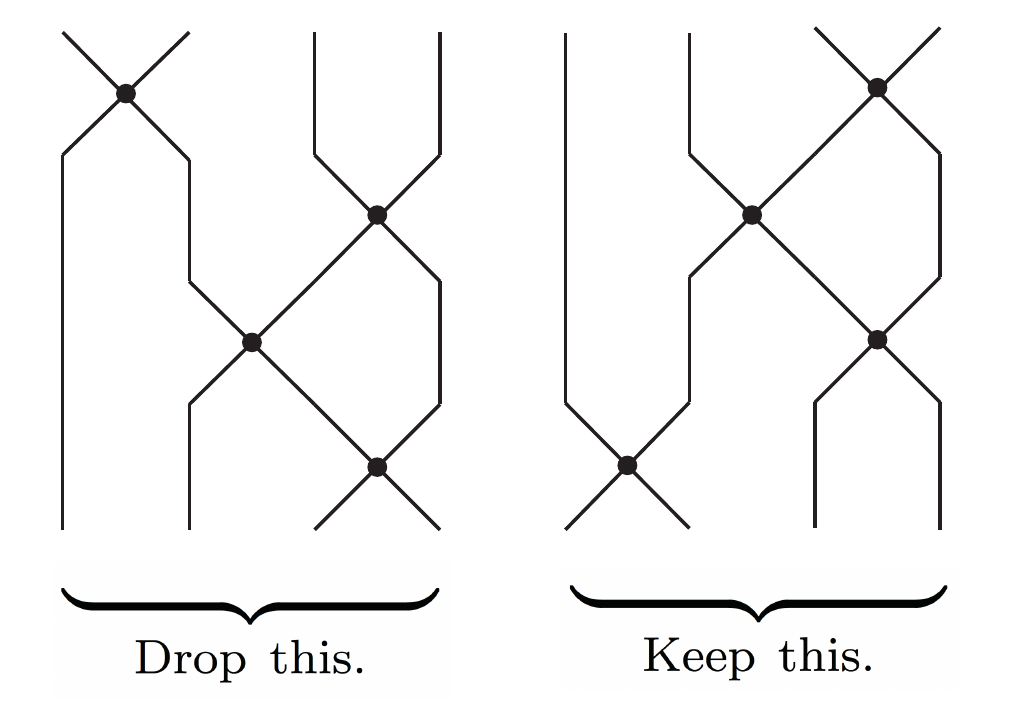}}
\end{center}

More precisely, by explicitely computing in this basis of $V_3+V_4$, we get the following
$$
\begin{array}{lcll}
s_1^{-1} (s_3^{-1} s_2 s_3^{-1}) &\equiv& (s_3^{-1} s_2 s_3^{-1}) s_1^{-1} & \mod LG_3 s_3 LG_3 + LG_3 s_3^{-1} LG_3 + LG_3 \\
s_1 (s_3^{-1} s_2 s_3^{-1}) &\equiv &(s_3^{-1} s_2 s_3^{-1}) s_1 & \mod LG_3 s_3 LG_3 + LG_3 s_3^{-1} LG_3 + LG_3
\end{array}
$$

We let $F^{\pm} \in H_4$ denote the image in $H_4$
of the expression of $s_1^{\pm} (s_3^{-1} s_2 s_3^{-1})-(s_3^{-1} s_2 s_3^{-1}) s_1^{\pm}- 
L^{\pm}$ with $L^+, L^-$ are the only linear combinations of the words of $\mathcal{B}_4^{(4)}$ such that $F^+, F^- \mapsto 0$ in $LG_4$.
We claim that $LG_4 = H_4/(r_2,F^+,F^-)$, but we actually prove more.
Indeed, we check that both these elements are non-zero exactly in all the irreducible representations of $H_4$ which
do not factorize through $LG_4$. This has for immediate consequence the following
\begin{prop} $LG_4 = H_4/(F^+) = H_4/(F^-)$.
\end{prop}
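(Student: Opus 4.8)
The plan is to exploit the fact, recalled from \cite{G32}, that $H_4$ is split semisimple over $K$, so that
$$
H_4 \simeq \bigoplus_{i} M_{n_i}(K),
$$
the sum running over the (finite) list of irreducible representations $\rho_i$ described in tables \ref{tab:reps25a} and \ref{tab:reps25b}. In such an algebra every two-sided ideal is a sum of some of the blocks $M_{n_i}(K)$, and the two-sided ideal generated by a single element $x$ is exactly
$$
(x) = \bigoplus_{i \,:\, \rho_i(x) \neq 0} M_{n_i}(K),
$$
since each block is a simple algebra. Hence $(x)$ is completely determined by the set of irreducible representations on which $x$ acts non-trivially, and to identify $(F^+)$ as an ideal it suffices to decide, for each irreducible representation $\rho_i$ of $H_4$, whether $\rho_i(F^+)$ is the zero matrix or not.

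Recall that $LG_4 = H_4/J$, where $J$ is the sum of the blocks attached to the irreducible representations that do \emph{not} factorize through $LG_4$, namely $T_{a,b}, T_{a,c}, T_{b,c}, V, U_{b,c}, U_{c,b}, V_{a,b,c}, V_{a,c,b}, V_{b,c,a}, V_{c,b,a}, W_b, W_c, X$ and $X'$. By the very definition of $F^+$ as (the image in $H_4$ of) an element mapping to $0$ in $LG_4$, we have $F^+ \in J$; equivalently $\rho_i(F^+) = 0$ on each of the ten representations $S_a, S_b, S_c, U_{b,a}, U_{a,c}, U_{c,a}, U_{a,b}, V_{c,a,b}, V_{b,a,c}, W_a$ factorizing through $LG_4$. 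This already gives $(F^+) \subseteq J$. To obtain equality, and thus $H_4/(F^+) = H_4/J = LG_4$, the remaining point is the reverse inclusion $J \subseteq (F^+)$, which by the formula above amounts to checking that $\rho_i(F^+) \neq 0$ for each of the fourteen representations comprising $J$.

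First I would therefore evaluate the explicit word
$$
s_1\,(s_3^{-1} s_2 s_3^{-1}) - (s_3^{-1} s_2 s_3^{-1})\, s_1 - L^+
$$
in each irreducible representation of $H_4$, using the matrix models of $s_1, s_2, s_3$ recorded in tables \ref{tab:reps25a} and \ref{tab:reps25b} (and the remaining ones stored in the CHEVIE package). Since $\rho_i(F^+) = 0$ holds by construction on the $LG_4$-representations, the computation reduces to verifying non-vanishing on $T_{a,b}, T_{a,c}, T_{b,c}, V, U_{b,c}, U_{c,b}, V_{a,b,c}, V_{a,c,b}, V_{b,c,a}, V_{c,b,a}, W_b, W_c, X, X'$. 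The identical argument applied to $F^-$ yields $H_4/(F^-) = LG_4$ as well.

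The main obstacle is precisely this last verification: because $L^+$ (hence $F^+$) has a very large number of terms, the evaluation of $\rho_i(F^+)$ in the higher-dimensional representations --- notably the six-dimensional $V_{x,y,z}$, the eight-dimensional $W_x$ and the nine-dimensional $X, X'$ --- is a heavy but entirely finite and mechanical computation, best carried out by computer over $\Q(a,b,c)$. Conceptually nothing beyond semisimplicity is needed; the content lies in confirming that the single relation $F^+$ (or $F^-$) is simultaneously non-zero on all fourteen blocks of $J$, which is exactly what renders the auxiliary relation $r_2$ superfluous and yields the sharper statement $LG_4 = H_4/(F^+) = H_4/(F^-)$.
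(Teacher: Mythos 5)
Your proposal is correct and follows essentially the same route as the paper: since $H_4$ is split semisimple, the two-sided ideal $(F^{\pm})$ is exactly the sum of the matrix blocks on which $F^{\pm}$ acts non-trivially, and the paper likewise reduces the proposition to the computer verification that $F^{\pm}$ vanishes on the ten irreducible representations factoring through $LG_4$ (which holds by the defining choice of $L^{\pm}$) and is non-zero on the remaining fourteen. Your write-up merely makes explicit the block-ideal mechanism that the paper leaves implicit in the sentence ``we check that both these elements are non-zero exactly in all the irreducible representations of $H_4$ which do not factorize through $LG_4$.''
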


We denote $r_3$ the lifting in $K B_4$ of one of the relations $F^+$ and $F^-$, so that
$LG_4 = K B_4 / (r_1,r_2,r_3) = KB_4/(r_1,r_3)$.

\begin{verbatim}
\end{verbatim}
\begin{verbatim}
\end{verbatim}
\begin{verbatim}
\end{verbatim}

\begin{verbatim}
\end{verbatim}

\begin{figure}
\begin{center}
\resizebox{!}{4cm}{\includegraphics{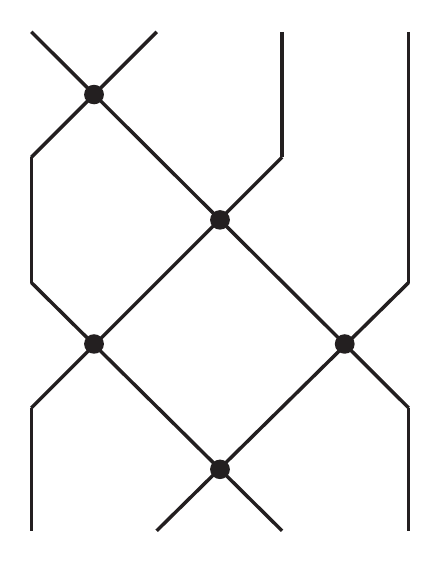}}
\resizebox{!}{4cm}{\includegraphics{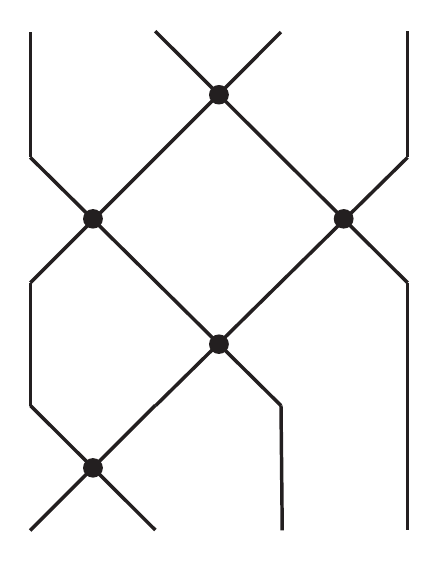}}
\end{center}
\caption{Patterns with 5 crossings}
\label{fig:M5patterns}
\end{figure}

\begin{figure}
\begin{center}
\resizebox{!}{4cm}{\includegraphics{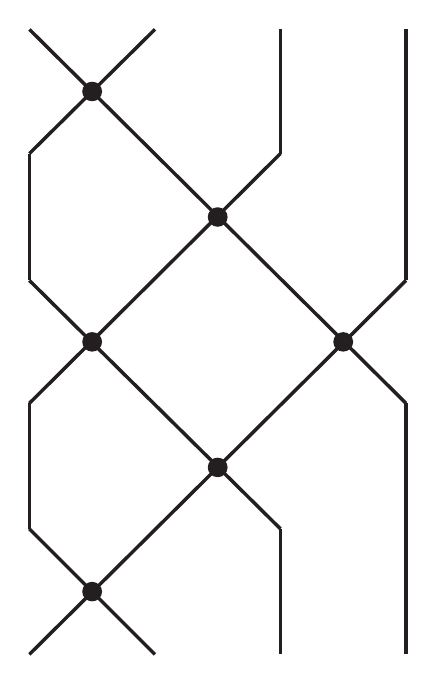}}
\end{center}
\caption{Pattern with 6 crossings}
\label{fig:M6patterns}
\end{figure}

\subsection{Case $n = 4$ : a spanning set suitable for induction.}
\label{sect6LG4bis}

We start again from the collection $\mathcal{B}_3^{(4)}$, from which
we remove the words $\{ s_1^{-1} s_2^{-1} s_3^{-1} s_2^{-1}, s_1 s_2^{-1} s_3^{-1} s_2^{-1} \}$,
and thus get a family $\mathcal{B}_{3,0}^{(4)}$  of 139 elements.
Their image in $LG_4$ is linearly independent. The words contained
in $\mathcal{B}_{3,0}^{(4)}$ all contain at most one crossing $s_3^{\pm}$
between the third and fourth strand, except for the word $s_3^{-1} s_2 s_3^{-1}$.
We want to expand this family into a basis of $LG_4$
which has the same property. For this, we add braids with 5 and 6 crossings.

We first add the 48 words corresponding to the patterns $s_1^{\alpha} s_2^{\beta} s_1^{\gamma} s_3^{\delta} s_2^{\eps}$
and $s_2^{\delta} s_3^{\eps} s_1^{\alpha} s_2^{\beta} s_1^{\gamma}$ with $\alpha, \beta, \gamma, \delta, \eps \in \{ -1,1 \}$,
which are described in figure \ref{fig:M5patterns}, with the additional property that we avoid the
subwords $s_1^{\alpha} s_2^{\beta} s_1^{\gamma} \in \{ s_1 s_2 s_1,
s_1^{-1} s_2 s_1 \}$.  This means that the subwords $s_1^{\alpha} s_2^{\beta} s_1^{\gamma} $
must belong to the image of $\mathcal{B}^{(3)}$ ; in other words,
we avoid the
patterns $s_1 s_2 s_1$ and $s_1^{-1} s_2 s_1$. This indeed provides $2 \times 2^2 \times 6 = 48$
new words.



We thus get a family $\mathcal{B}_{5}^{(4)}$
of size $139+48 = 187>175 = \dim LG_4$. The linear
span of $\mathcal{B}_5^{(4)}$, which we denote $V_5$, actually has dimension 168.
We first look for braids with 6 crossings which complete the previous family
into a spanning set of $LG_4$. We consider the pattern described 
in figure \ref{fig:M6patterns}. We are looking for a set of $7$ braids  whose
image complete the spanning of $LG_4$. A suitable set is given by
the following family

$$
\begin{array}{ll}
\mathcal{A}_6^{(4)} = \{ & s _1 s_2^{-1} s_1 s_3^{-1} s_2 s_1    ,
 s _1 s_2^{-1} s_1 s_3 s_2^{-1} s_1    ,
 s _1 s_2 s_1 s_3^{-1} s_2^{-1} s_1    ,
 s _1 s_2 s_1 s_3^{-1} s_2 s_1    ,
 s _1 s_2 s_1 s_3 s_2^{-1} s_1    , \\
&  s _1^{-1} s_2 s_1^{-1} s_3^{-1} s_2 s_1^{-1},
  s _1^{-1} s_2 s_1^{-1} s_3 s_2^{-1} s_1^{-1}     \} \\
  \end{array}
$$

\resizebox{15cm}{!}{
\begin{tikzpicture}
\braid[braid colour=red,strands=4,braid start={(0,0)}]%
{ \dsigma _1 \dsigma_2^{-1} \dsigma_1 \dsigma_3^{-1} \dsigma_2 \dsigma_1    }
\node[font=\Huge] at (4.5,-4) {\(,\)};
\braid[braid colour = red,strands=4,braid start={(4,0)}]
{ \dsigma _1 \dsigma_2^{-1} \dsigma_1 \dsigma_3 \dsigma_2^{-1} \dsigma_1    }
\node[font=\Huge] at (8.5,-4) {\(,\)};
\braid[braid colour = red,strands=4,braid start={(8,0)}]
{ \dsigma _1 \dsigma_2 \dsigma_1 \dsigma_3^{-1} \dsigma_2^{-1} \dsigma_1    }
\node[font=\Huge] at (12.5,-4) {\(,\)};
\braid[braid colour=red,strands=4,braid start={(12,0)}]%
{ \dsigma _1 \dsigma_2 \dsigma_1 \dsigma_3^{-1} \dsigma_2 \dsigma_1    }
\node[font=\Huge] at (16.5,-4) {\(,\)};
\braid[braid colour = red,strands=4,braid start={(16,0)}]
{ \dsigma _1 \dsigma_2 \dsigma_1 \dsigma_3 \dsigma_2^{-1} \dsigma_1    }
\node[font=\Huge] at (20.5,-4) {\(,\)};
\braid[braid colour = red,strands=4,braid start={(20,0)}]
{ \dsigma _1^{-1} \dsigma_2 \dsigma_1^{-1} \dsigma_3^{-1} \dsigma_2 \dsigma_1^{-1}    }
\node[font=\Huge] at (24.5,-4) {\(,\)};
\braid[braid colour=red,strands=4,braid start={(24,0)}]%
{ \dsigma _1^{-1} \dsigma_2 \dsigma_1^{-1} \dsigma_3 \dsigma_2^{-1} \dsigma_1^{-1}    }
\end{tikzpicture}}


We now select a suitable subset of the already chosen 48 words which correspond to the patterns
described in figure \ref{fig:M5patterns}. We keep all the braids corresponding to the
pattern on the right-hand side of figure \ref{fig:M5patterns} \emph{but} the words
$$\mathcal{D}^{(4)}_{5,R} = \{ s_2s_3s_1^{-1}s_2^{-1} s_1^{-1},
s_2s_3s_1^{-1}s_2^{-1} s_1,
s_2s_3s_1 s_2^{-1} s_1^{-1}, 
s_2s_3s_1 s_2 s_1^{-1} \}. $$
We denote $\mathcal{A}^{(4)}_{5,R}$ the corresponding set of $24-4 = 20$ words.
Finally, among the $24$ words corresponding to the pattern on the left-hand side
of figure \ref{fig:M5patterns}, we keep only the 9 words
$$
\begin{array}{ll}
\mathcal{A}_{5,L}^{(4)} = & \{ s_1^{-1}s_2^{-1}s_1^{-1}s_3s_2, s_1^{-1}s_2^{-1}s_1s_3s_2, s_1^{-1}s_2s_1^{-1}s_3s_2, s_1s_2^{-1}s_1^{-1}s_3s_2, s_1s_2^{-1}s_1s_3s_2, \\ &
  s_1s_2s_1^{-1}s_3s_2, s_1^{-1}s_2^{-1}s_1^{-1}s_3s_2^{-1}, s_1^{-1}s_2s_1^{-1}s_3s_2^{-1}, s_1s_2^{-1}s_1^{-1}s_3s_2^{-1} \}. \\
  \end{array}
$$
The final collection $\mathcal{B}_6^{(4)} = \mathcal{B}_{3,0}^{(4)} \sqcup \mathcal{A}_{5,L}^{(4)} \sqcup \mathcal{A}_{5,R}^{(4)}
\sqcup \mathcal{A}_{6}^{(4)}$ has cardinality 175. By computer calculation one can then check the following.


\begin{prop} The collection $\mathcal{B}_6^{(4)}$ is a basis for $LG_4$. The collection of the $b_1 s_3^r b_1$ for $b_1,b_2
\in \mathcal{B}^{(3)}$ and $r \in \{-1,0,1 \}$ 
spans in $LG_4$ a subset of dimension $174 = \dim LG_4 - 1$. 
In particular, we have
 $$
 LG_4 =  \left( \sum_{r \in \{-1,0,1 \}}  LG_3 s_3^{r} LG_3 \right) \oplus K s_3^{-1} s_2 s_3^{-1}
 $$ 
\end{prop}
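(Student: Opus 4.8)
The plan is to reduce the three assertions to a couple of explicit rank computations carried out through the surjection $\Phi : K B_4 \onto LG_4 \simeq K^{175}$ of section \ref{soussectchar}, using that $\dim LG_4 = 175$, that $\mathcal{B}^{(3)}$ is a basis of $LG_3$, and that $LG_3 \subset LG_4$. Throughout I write $w_0 = s_3^{-1} s_2 s_3^{-1}$ and $W = \sum_{r \in \{-1,0,1\}} LG_3 s_3^r LG_3$; since $\mathcal{B}^{(3)}$ spans $LG_3$, the subspace $W$ is exactly the linear span of the family $\{ b_1 s_3^r b_2 \ | \ b_1, b_2 \in \mathcal{B}^{(3)},\ r \in \{-1,0,1\} \}$.

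The one conceptual ingredient I would isolate first is a combinatorial remark: every word of $\mathcal{B}_6^{(4)}$ other than $w_0$ contains at most one letter $s_3^{\pm 1}$. This is built into $\mathcal{B}_{3,0}^{(4)}$ (where $w_0$ is the unique exception), and it is visible on each word of $\mathcal{A}_{5,L}^{(4)}$, $\mathcal{A}_{5,R}^{(4)}$ and $\mathcal{A}_6^{(4)}$, which carry a single $s_3^{\pm 1}$. Such a word therefore factors as $u\, s_3^r\, v$ with $u, v$ words in $s_1, s_2$ and $r \in \{-1,0,1\}$; as the images of $u, v$ lie in $LG_3$, it belongs to $W$. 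Hence the $174$ elements of $\mathcal{B}_6^{(4)} \setminus \{ w_0 \}$ all lie in $W$.

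Next I would run two computations with $\Phi$. The first checks that the $175$ vectors $\Phi(\beta)$, $\beta \in \mathcal{B}_6^{(4)}$, are linearly independent in $K^{175}$; together with $\dim LG_4 = 175$ this already yields the first claim, that $\mathcal{B}_6^{(4)}$ is a basis. The second evaluates the rank of the full family $\{ \Phi(b_1 s_3^r b_2) \}$, which should come out equal to $174$. Since the $174$ independent vectors $\Phi(\beta)$ with $\beta \neq w_0$ already lie in the span of this family, that rank is automatically at least $174$, so the computation pins $\dim W = 174$ and shows these $174$ elements form a basis of $W$. Finally, because $\mathcal{B}_6^{(4)}$ is independent, $\Phi(w_0)$ is not a combination of the remaining $174$ basis vectors; as those span $W$, this gives $w_0 \notin W$, and the three conclusions assemble into $LG_4 = W \oplus K w_0$ by the dimension count $175 = 174 + 1$.

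The hard part is the second computation. There is no inexpensive a priori reason forcing $\dim W \leq 174$ — equivalently $w_0 \notin W$ — so one genuinely has to evaluate the rank of the roughly $3 \cdot 20^2$ products $b_1 s_3^r b_2$, written as matrices of size up to $8 \times 8$ with entries in $\Q(a,b,c)$; this is only realistic by machine. Reducing arbitrary such products to the chosen spanning set moreover relies on the relations $r_2$ and $r_3$ together with the $n = 3$ identities for $s_1^{\pm} s_2 s_1$ established above, which is where the earlier sections feed in.
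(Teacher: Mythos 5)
Your proposal is correct and takes essentially the same route as the paper, which establishes this proposition purely by computer calculation in the explicit matrix model $\Phi : K B_4 \onto K^{175}$ (checking that the $175$ images of $\mathcal{B}_6^{(4)}$ are independent and that the family $b_1 s_3^r b_2$ has rank $174$), combined with the observation that every word of $\mathcal{B}_6^{(4)}$ except $s_3^{-1} s_2 s_3^{-1}$ contains at most one letter $s_3^{\pm 1}$ and hence lies in $\sum_r LG_3 s_3^r LG_3$. Your explicit assembly of these computational facts into the direct-sum decomposition is exactly the deduction the paper leaves implicit.
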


\begin{verbatim}
\end{verbatim}
\begin{verbatim}
\end{verbatim}
\subsection{$LG_{n+1}$ as $LG_n$-bimodule}
\label{sect6bimod}

\begin{lemma} {\ } \label{lemreduction}

\begin{enumerate}
\item $LG_{n-1} s_{n-1}^{-1} s_{n-2} s_{n-1}^{-1} LG_{n-1} \subset \sum_r LG_{n-1} s_{n-1}^r LG_{n-1}
+ LG_{n-3} s_{n-1}^{-1} s_{n-2} s_{n-1}^{-1}$
\item $LG_{n-1} s_{n-1}^{-1} s_{n-2} s_{n-1}^{-1}LG_{n-1} s_{n-1}^{\pm 1} \subset \sum_r LG_{n-1} s_{n-1}^r LG_{n-1}
s_{n-1}^{\pm 1}+ LG_{n-3} s_{n-1}^{-1} s_{n-2} s_{n-1}^{-1}$
\end{enumerate}
\end{lemma}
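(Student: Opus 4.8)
The plan is to deduce both statements from the structural fact, established in the preceding Proposition, that $LG_4 = \sum_{r} LG_3 s_3^r LG_3 \oplus K s_3^{-1}s_2 s_3^{-1}$, transported to the four consecutive strands $n-3,\dots,n$ inside $LG_n$. Throughout write $w = s_{n-1}^{-1}s_{n-2}s_{n-1}^{-1}$ and $M = \sum_r LG_{n-1} s_{n-1}^r LG_{n-1}$. The cubic relation expresses every power $s_{n-1}^r$ as a combination of $1,s_{n-1},s_{n-1}^{-1}$, so the sum defining $M$ is finite and $M$ is visibly a two-sided $LG_{n-1}$-submodule of $LG_n$. Setting $N = M + LG_{n-3}\,w$, statement (i) is exactly the assertion $LG_{n-1}\,w\,LG_{n-1}\subseteq N$; since $w = 1\cdot w\in N$, it suffices to show that $N$ is stable under left and right multiplication by $LG_{n-1}$, i.e. that $N$ is a sub-bimodule, which I would verify on the algebra generators $s_1^{\pm},\dots,s_{n-2}^{\pm}$. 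Note that no induction on $n$ is needed: the Proposition does all the work.

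For the bimodule check I would record three elementary facts: the subalgebra $LG_{n-3}=\langle s_1,\dots,s_{n-4}\rangle$ commutes with $w$; the generator $s_{n-2}$ commutes with $LG_{n-3}$; and the products $s_{n-3}^{\pm}w,\,s_{n-2}^{\pm}w$ and their right analogues $w\,s_{n-3}^{\pm},\,w\,s_{n-2}^{\pm}$ all lie in the subalgebra $\mathcal{C}=\langle s_{n-3},s_{n-2},s_{n-1}\rangle$. This $\mathcal{C}$ is the image of $LG_4$ under the embedding on strands $n-3,\dots,n$, with $w$ the image of the special element $s_3^{-1}s_2 s_3^{-1}$; hence the Proposition gives $\mathcal{C}=\sum_r\langle s_{n-3},s_{n-2}\rangle s_{n-1}^r\langle s_{n-3},s_{n-2}\rangle + Kw\subseteq M + Kw$. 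With these in hand, for a spanning element $z\,w$ of $LG_{n-3}\,w$ (where $z\in LG_{n-3}$) one computes $s_i^{\pm}(zw)=(s_i^{\pm}z)w\in LG_{n-3}w$ for $i\le n-4$; next $s_{n-2}^{\pm}(zw)=z\,(s_{n-2}^{\pm}w)\in z(M+Kw)\subseteq N$, using that $s_{n-2}$ commutes with $z$; and $s_{n-3}^{\pm}(zw)=(s_{n-3}^{\pm}w)\,z\in(M+Kw)z\subseteq N$, using that $z$ commutes with $w$. The symmetric computations give right-stability, so $N$ is a sub-bimodule and (i) follows.

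For (ii) I would feed (i) into the extra trailing factor. By (i), $LG_{n-1}\,w\,LG_{n-1}\,s_{n-1}^{\pm}\subseteq M s_{n-1}^{\pm} + LG_{n-3}\,(w\,s_{n-1}^{\pm})$. A one-line computation with the cubic relation shows $w\,s_{n-1}^{\pm}\in M + Kw$: indeed $w\,s_{n-1}=s_{n-1}^{-1}s_{n-2}\in M$, while $w\,s_{n-1}^{-1}=s_{n-1}^{-1}s_{n-2}s_{n-1}^{-2}$ reduces, via $s_{n-1}^{-2}\in\mathrm{span}(1,s_{n-1},s_{n-1}^{-1})$, to a term of $M$ plus a multiple of $w$. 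Hence $LG_{n-3}(w s_{n-1}^{\pm})\subseteq M + LG_{n-3}w$, and since right multiplication by the invertible $s_{n-1}^{\pm}$ preserves the two-sided module $M$ (so $M s_{n-1}^{\pm}=M$), the extra copy of $M$ is absorbed into $\sum_r LG_{n-1}s_{n-1}^r LG_{n-1}s_{n-1}^{\pm}=M s_{n-1}^{\pm}$; this is exactly the right-hand side of (ii).

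The part I expect to require the most care is the treatment of $s_{n-3}$ in the bimodule check: unlike $s_{n-2}$, it does \emph{not} commute with $LG_{n-3}$, so one cannot pull the coefficient $z$ out on the same side. The resolution — commuting $z$ through $w$ first (they do commute), and only then invoking the four-strand block Proposition on $s_{n-3}^{\pm}w$ — is the crux, and it is here that the relation $r_3$ enters, since $r_3$ is precisely what validates the $LG_4$-decomposition of the block $\mathcal{C}$. A secondary point is that one must use the \emph{four}-strand block $\langle s_{n-3},s_{n-2},s_{n-1}\rangle$ rather than the three-strand one $\langle s_{n-2},s_{n-1}\rangle$, because $s_{n-3}^{\pm}w$ genuinely involves all three generators.
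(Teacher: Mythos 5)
Your proof of part (i) is correct and takes essentially the paper's route: the paper likewise reduces the statement to showing that $N=\sum_r LG_{n-1}s_{n-1}^r LG_{n-1}+LG_{n-3}w$ (with $w=s_{n-1}^{-1}s_{n-2}s_{n-1}^{-1}$) is an $LG_{n-1}$-bimodule, notes that $LG_{n-3}$ commutes with $w$, and settles the eight critical products $s_{n-2}^{\pm 1}w$, $s_{n-3}^{\pm 1}w$, $ws_{n-2}^{\pm 1}$, $ws_{n-3}^{\pm 1}$ inside a shifted copy of $LG_4$ (your block $\mathcal{C}$), with no induction on $n$ -- your trick of commuting the coefficient $z$ through $w$ before applying the block decomposition is exactly the detail the paper leaves implicit.

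Part (ii), however, has a genuine gap: the claim $Ms_{n-1}^{\pm 1}=M$, where $M=\sum_r LG_{n-1}s_{n-1}^r LG_{n-1}$. That $M$ is a two-sided $LG_{n-1}$-module is irrelevant here, because $s_{n-1}\notin LG_{n-1}$, and the equality is in fact false. At $n=4$ the Proposition you invoke gives $LG_4=M\oplus Kw$, so $w\notin M$; on the other hand $ws_3=s_3^{-1}s_2\in LG_3s_3^{-1}LG_3\subset M$, hence $w=(s_3^{-1}s_2)s_3^{-1}\in Ms_3^{-1}$, so $Ms_3^{-1}\neq M$ (and $Ms_3\neq M$, right multiplication by $s_3$ being bijective). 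Your absorption step therefore collapses, and what your computation actually establishes is only $LG_{n-1}wLG_{n-1}s_{n-1}^{\pm 1}\subset Ms_{n-1}^{\pm 1}+M+LG_{n-3}w$; the missing inclusion $M\subset Ms_{n-1}^{-1}+LG_{n-3}w$ is equivalent, after right multiplication by $s_{n-1}$ and using $LG_{n-3}ws_{n-1}=LG_{n-3}s_{n-1}^{-1}s_{n-2}\subset M$, to $Ms_{n-1}\subset M$, which we just refuted.

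Moreover no repair of this step can succeed, because for the minus sign the inclusion as literally stated already fails at $n=4$: there the right-hand side is $Ms_3^{-1}+Kw=Ms_3^{-1}$ (since $w\in Ms_3^{-1}$), while the left-hand side contains $ws_3^{-1}$, and $ws_3^{-1}\in Ms_3^{-1}$ would force $w\in M$, contradicting the direct sum. The inclusion that does follow immediately from (i), by right multiplication, is the one with correction term $LG_{n-3}ws_{n-1}^{\pm 1}$ in place of $LG_{n-3}w$, and your first reduction line proves exactly that. Your subsidiary computations are sound in substance ($ws_{n-1}=s_{n-1}^{-1}s_{n-2}\in M$ and $ws_{n-1}^{-1}\in M+Kw$), though note that expanding $s_{n-1}^{-2}$ produces the term $s_{n-1}^{-1}s_{n-2}s_{n-1}$, which is not visibly in $M$ as your one-line justification suggests and must itself be passed through the three-strand block $\langle s_{n-2},s_{n-1}\rangle\subset M+Kw$. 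The paper's own proof of (ii) is the bare assertion that it is ``a straightforward consequence of (i) and of the choice of basis for $LG_3$'', which glosses over precisely the absorption problem you ran into; since only part (i) is used in the sequel (in the bimodule theorem and the trace argument), nothing downstream is affected, but your argument for (ii) as stated does not close.
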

\begin{proof}
Part (ii) is a straightforward consequence of (i) and of the choice of basis for $LG_3$. We prove part (i). When $n = 4$
this is a consequence of our choice of basis.
Let $A = \sum_r LG_{n-1} s_{n-1}^r LG_{n-1}
+ LG_{n-3} s_{n-1}^{-1} s_{n-2} s_{n-1}^{-1} \subset LG_n$.
Since $ s_{n-1}^{-1} s_{n-2} s_{n-1}^{-1} \in A$, we only need to prove that $A$
is a $LG_{n-1}$-submodule on both sides. Because $LG_{n-3}$ commutes with $s_{n-1}^{-1} s_{n-2} s_{n-1}^{-1}$
this amounts to saying that $A$ is stable by multiplication on both sides by $s_{n-2}^{\pm 1}$ and $s_{n-3}^{\pm 1}$,
that is $s_{n-2}^{\pm 1}(s_{n-1}^{-1} s_{n-2} s_{n-1}^{-1}) \in A$, $s_{n-3}^{\pm 1}(s_{n-1}^{-1} s_{n-2} s_{n-1}^{-1}) \in A$,
$(s_{n-1}^{-1} s_{n-2} s_{n-1}^{-1})s_{n-2}^{\pm 1} \in A$, $(s_{n-1}^{-1} s_{n-2} s_{n-1}^{-1})s_{n-3}^{\pm 1} \in A$.
These eight braids are conjugates of braids whose image lie in $LG_4$ and we can use the result for $n = 4$ and $n=3$, that we obtained above, in order to
conclude the proof.

$$
\begin{array}{|c|c|c|c|}
\hline
\resizebox{!}{3cm}{\mbox{\begin{tikzpicture}
\braid[braid colour=blue,strands=4,braid start={(0,0)}]%
{ \dsigma _2}
\braid[braid colour=red,strands=4,braid start={(0,-1)}]%
{ \dsigma _3^{-1}  \dsigma_2  \dsigma_3 ^{-1}}
\end{tikzpicture} }}& 
\resizebox{!}{3cm}{\mbox{\begin{tikzpicture}
\braid[braid colour=blue,strands=4,braid start={(0,0)}]%
{ \dsigma _2^{-1}   }
\braid[braid colour=red,strands=4,braid start={(0,-1)}]%
{  \dsigma _3^{-1}  \dsigma_2  \dsigma_3 ^{-1}}
\end{tikzpicture}}} &
\resizebox{!}{3cm}{\mbox{\begin{tikzpicture}
\braid[braid colour=blue,strands=4,braid start={(0,0)}]%
{ \dsigma _1}
\braid[braid colour=red,strands=4,braid start={(0,-1)}]%
{ \dsigma _3^{-1}  \dsigma_2  \dsigma_3 ^{-1}}
\end{tikzpicture} }}& 
\resizebox{!}{3cm}{\mbox{\begin{tikzpicture}
\braid[braid colour=blue,strands=4,braid start={(0,0)}]%
{ \dsigma _1^{-1}   }
\braid[braid colour=red,strands=4,braid start={(0,-1)}]%
{  \dsigma _3^{-1}  \dsigma_2  \dsigma_3 ^{-1}}
\end{tikzpicture}}} \\
\hline
\resizebox{!}{3cm}{\mbox{\begin{tikzpicture}
\braid[braid colour=red,strands=4,braid start={(0,0)}]%
{ \dsigma _3^{-1}  \dsigma_2  \dsigma_3 ^{-1}}
\braid[braid colour=blue,strands=4,braid start={(0,-3)}]%
{ \dsigma _2}
\end{tikzpicture} }}& 
\resizebox{!}{3cm}{\mbox{\begin{tikzpicture}
\braid[braid colour=red,strands=4,braid start={(0,0)}]%
{  \dsigma _3^{-1}  \dsigma_2  \dsigma_3 ^{-1}}
\braid[braid colour=blue,strands=4,braid start={(0,-3)}]%
{ \dsigma _2^{-1}   }
\end{tikzpicture}}} &
\resizebox{!}{3cm}{\mbox{\begin{tikzpicture}
\braid[braid colour=red,strands=4,braid start={(0,0)}]%
{ \dsigma _3^{-1}  \dsigma_2  \dsigma_3 ^{-1}}
\braid[braid colour=blue,strands=4,braid start={(0,-3)}]%
{ \dsigma _1}
\end{tikzpicture} }}& 
\resizebox{!}{3cm}{\mbox{\begin{tikzpicture}
\braid[braid colour=red,strands=4,braid start={(0,0)}]%
{  \dsigma _3^{-1}  \dsigma_2  \dsigma_3 ^{-1}}
\braid[braid colour=blue,strands=4,braid start={(0,-3)}]%
{ \dsigma _1^{-1}   }
\end{tikzpicture}}} \\
\hline
\end{array}
$$

\end{proof}

\begin{theor} {\ } \label{theobimoduleLG} For $n \geq 3$ we have
\begin{enumerate}
\item $
LG_n = LG_{n-1} s_{n-1}^{\pm 1} LG_{n-1} + \sum_{k+ \ell = n} LG_k LG_l
$
\item $
LG_n =   \sum_r  LG_{n-1} s_{n-1}^{r} LG_{n-1}+ LG_{n-3}( s_{n-1}^{-1} s_{n-2} s_{n-1}^{-1})
$
\end{enumerate}
\end{theor}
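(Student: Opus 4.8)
The plan is to establish (ii) first, by induction on $n$, and then to read off (i) as an immediate consequence. Write $P = \sum_{r \in \{-1,0,1\}} LG_{n-1} s_{n-1}^r LG_{n-1}$ and $Q = LG_{n-3}(s_{n-1}^{-1} s_{n-2} s_{n-1}^{-1})$, and set $M = P + Q \subseteq LG_n$; the content of (ii) is exactly $M = LG_n$. The base cases $n=3$ and $n=4$ are available: for $n=4$ this is the displayed decomposition $LG_4 = (\sum_r LG_3 s_3^r LG_3) \oplus K s_3^{-1} s_2 s_3^{-1}$ proved above, and for $n=3$ it is read off the basis $\mathcal{B}^{(3)}$, since $\mathcal{B}^{(3)} \setminus \{s_2^{-1} s_1 s_2^{-1}\}$ consists of words of the form $s_1^i s_2^r s_1^j$, hence lies in $\sum_r LG_2 s_2^r LG_2$, while the remaining basis vector accounts for $K s_2^{-1} s_1 s_2^{-1} = LG_0(s_2^{-1} s_1 s_2^{-1})$.

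Since $LG_n$ is generated as an algebra by $LG_{n-1}$ and $s_{n-1}^{\pm 1}$, and $1 \in LG_{n-1} \subseteq M$, it suffices to show that $M$ is stable under right multiplication by $LG_{n-1}$ and by $s_{n-1}^{\pm 1}$: these two closure properties give $M g \subseteq M$ for every Artin generator $g$ of $LG_n$, so every word in the generators lies in $1\cdot M \subseteq M$ and thus $LG_n \subseteq M$. Right $LG_{n-1}$-stability of $P$ is clear, and for $Q$ one uses that $LG_{n-3}$ commutes with $s_{n-1}^{-1} s_{n-2} s_{n-1}^{-1}$ to write $Q\, LG_{n-1} = (s_{n-1}^{-1} s_{n-2} s_{n-1}^{-1}) LG_{n-1} \subseteq LG_{n-1} (s_{n-1}^{-1} s_{n-2} s_{n-1}^{-1}) LG_{n-1}$, which lies in $M$ by Lemma \ref{lemreduction}(i). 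Likewise $Q s_{n-1}^{\pm 1} \subseteq M$ is immediate after cancelling or applying a braid relation together with the cubic relation.

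The crux is the stability of $P$ under right multiplication by $s_{n-1}$ (the case $s_{n-1}^{-1}$ being treated identically). Here $P s_{n-1} = \sum_r LG_{n-1} s_{n-1}^r LG_{n-1} s_{n-1}$; the $r=0$ term gives $LG_{n-1} s_{n-1} \subseteq P$, so the issue is $LG_{n-1} s_{n-1}^{\pm 1} LG_{n-1} s_{n-1} \subseteq M$. I would expand the central factor $LG_{n-1}$ by the induction hypothesis (ii) at level $n-1$, namely $LG_{n-1} = \sum_{r'} LG_{n-2} s_{n-2}^{r'} LG_{n-2} + LG_{n-4}(s_{n-2}^{-1} s_{n-3} s_{n-2}^{-1})$, and push the $LG_{n-2}$-, resp.\ $LG_{n-4}$-factors outward using that $s_1, \dots, s_{n-3}$ commute with $s_{n-1}$. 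This reduces each term to $LG_{n-1}\cdot w\cdot LG_{n-1}$ with $w$ a braid word supported on the three strands $\{n-2,n-1,n\}$ (from the $r'=\pm1$ part) or on the four strands $\{n-3,n-2,n-1,n\}$ (from the $LG_{n-4}$ part), the case $r'=0$ collapsing a power of $s_{n-1}$ by the cubic relation. Applying the base cases $n=3$ and $n=4$ in the corresponding shifted copies of $LG_3$ and $LG_4$ rewrites $w$ as an element of $\sum_\rho (\cdots) s_{n-1}^\rho (\cdots) + K s_{n-1}^{-1} s_{n-2} s_{n-1}^{-1}$ with outer factors lying in $LG_{n-1}$; the first part lands in $P$ and the second in $LG_{n-1}(s_{n-1}^{-1} s_{n-2} s_{n-1}^{-1}) LG_{n-1} \subseteq M$ by Lemma \ref{lemreduction}(i). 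This closes the induction and proves (ii).

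Finally (i) follows: by (ii), $LG_n = P + Q$, and $P = LG_{n-1} s_{n-1} LG_{n-1} + LG_{n-1} s_{n-1}^{-1} LG_{n-1} + LG_{n-1}$ with $LG_{n-1} = LG_{n-1} LG_1 \subseteq \sum_{k+\ell=n} LG_k LG_\ell$, while $Q \subseteq LG_{n-3} LG_3 \subseteq \sum_{k+\ell=n} LG_k LG_\ell$ because $s_{n-1}^{-1} s_{n-2} s_{n-1}^{-1}$ is supported on the last three strands. The main obstacle is the computational reduction of the previous paragraph: one must check that every $3$- and $4$-strand pattern produced genuinely falls into the two base-case decompositions, which is exactly the point where Lemma \ref{lemreduction} and the explicit $LG_4$ computation become indispensable.
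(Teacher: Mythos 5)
Your proof is correct and takes essentially the same route as the paper's: the same induction with base cases $n=3,4$, the same expansion of the middle factor $LG_{n-1}$ via the induction hypothesis at level $n-1$, the same commutation of $LG_{n-2}$ and $LG_{n-4}$ past $s_{n-1}$ to reduce to the $3$-strand words $s_{n-1}^{\pm 1}s_{n-2}^{r'}s_{n-1}^{\rho}$ and $4$-strand words $s_{n-1}^{\pm 1}(s_{n-2}^{-1}s_{n-3}s_{n-2}^{-1})s_{n-1}^{\rho}$ handled by the chosen bases of $LG_3$ and $LG_4$, and the same use of Lemma \ref{lemreduction}(i) to absorb $LG_{n-1}(s_{n-1}^{-1}s_{n-2}s_{n-1}^{-1})LG_{n-1}$ — the only cosmetic difference being that you check stability under right multiplication by the generators where the paper checks left multiplication by $s_k^{\pm 1}$ for $k\in\{n-1,n-2,n-3\}$ (and note that your term $LG_{n-3}s_{n-1}^{-1}s_{n-2}s_{n-1}$, produced by the cubic relation in treating $Qs_{n-1}^{-1}$, is disposed of by the $n=3$ base case rather than by a braid relation). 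You additionally write out the $n=3$ base case from $\mathcal{B}^{(3)}$ and the deduction of (i) from (ii), both of which the paper leaves implicit, and both are correct.
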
 
\begin{proof}
(second formula). By induction on $n$, the cases $n=3$ and $n = 4$ being already done.
Let $A = \sum_r  LG_{n-1} s_{n-1}^{r} LG_{n-1}+ LG_{n-3}( s_{n-1}^{-1} s_{n-2} s_{n-1}^{-1}) \subset LG_n$.
Since $1 \in A$, we only need to prove that $s_{k}^{\pm 1}A \subset A$ for $k \in \{ n-1,n-2,n-3 \}$. For $k < n-1$
this is a straightforward consequence of lemma \ref{lemreduction} (i), so we can assume $k = n-1$.
We have, for some $u$, $s_{n-1}^{\pm 1} LG_{n-3}( s_{n-1}^{-1} s_{n-2} s_{n-1}^{-1}) = LG_{n-3}( s_{n-1}^{u} s_{n-2} s_{n-1}^{-1})\subset LG_{n-3}s_{n-1}^{-1} s_{n-2} s_{n-1}^{-1} + \sum_r LG_{n-1} s_{n-1}^r LG_{n-1} \subset A$
because of the chosen basis for $LG_3$. We then only need to prove $s_{n-1}^{\pm}  LG_{n-1} s_{n-1}^r LG_{n-1} \subset A$.
We use the induction assumption $LG_{n-1} \subset \sum_{r'} LG_{n-2} s_{n-2}^{r'}LG_{n-2} + 
  LG_{n-4}( s_{n-2}^{-1} s_{n-3} s_{n-2}^{-1})$ hence
$$
\begin{array}{lcl}
s_{n-1}^{\pm 1} LG_{n-1} s_{n-1}^r LG_{n-1} &\subset &
\sum_{r'} s_{n-1}^{\pm 1}LG_{n-2} s_{n-2}^{r'}LG_{n-2}s_{n-1}^r LG_{n-1} \\
& & + 
  s_{n-1}^{\pm 1}LG_{n-4}( s_{n-2}^{-1} s_{n-3} s_{n-2}^{-1})s_{n-1}^r LG_{n-1}\\ 
 &\subset &
\sum_{r'} LG_{n-2}  s_{n-1}^{\pm 1}s_{n-2}^{r'}s_{n-1}^rLG_{n-2} LG_{n-1} \\
& & + 
  LG_{n-4} s_{n-1}^{\pm 1}( s_{n-2}^{-1} s_{n-3} s_{n-2}^{-1})s_{n-1}^r LG_{n-1}\\ 
\end{array}
$$
and, by the case $n = 3$,
$s_{n-1}^{\pm 1}s_{n-2}^{r'}s_{n-1}^r \in \sum_u LG_{n-1} s_{n-1}^u LG_{n-1} + LG_{n-1}  s_{n-1}^{-1} s_{n-2} s_{n-1}^{-1}LG_{n-1}$,
hence $LG_{n-1} s_{n-1}^{\pm 1}s_{n-2}^{r'}s_{n-1}^r LG_{n-1}$.
 Moreover, $s_{n-1}^{\pm 1}( s_{n-2}^{-1} s_{n-3} s_{n-2}^{-1})s_{n-1}^r
 \in \sum_u LG_{n-1} s_{n-1}^u LG_{n-1} + LG_{n-1} s_{n-1}^{-1} s_{n-2} s_{n-1}^{-1} LG_{n-1}$ because of
the chosen basis in $LG_4$, hence
$$
s_{n-1}^{\pm 1} LG_{n-1} s_{n-1}^r LG_{n-1} \subset 
\sum_u LG_{n-1} s_{n-1}^u LG_{n-1} + LG_{n-1} s_{n-1}^{-1} s_{n-2} s_{n-1}^{-1} LG_{n-1} \subset A
$$
by lemma \ref{lemreduction}.
\end{proof}

\begin{verbatim}
\end{verbatim}
\begin{verbatim}
\end{verbatim}

\section{Markov traces}
\label{Markovtrace}
Using the careful analysis of the previous section, we define a quotient $A_n$ of the braid group algebra $KB_n$ of $n$ strands by a cubical relation $r_1$  as well as one relation $r_2$ on three strands and one relation $r_3$ on four strands. Notice that we conjecture (see conjecture \ref{conjiso}) that this algebra is isomorphic for all $n$ to the centralizer algebra $LG_n$.

Since $A_4 \simeq LG_4$, the proof of theorem \ref{theobimoduleLG}
can be adapted immediately to yield the following statement.
\begin{theor}
\label{theobimoduleA}
For all $n \geq 3$,
$$
A_{n+1} = A_n + A_n s_n A_n + A_n s_n^{-1} A_n + A_{n-2} s_{n-1}^{-1} s_n s_{n-1}^{-1}.
$$
\end{theor}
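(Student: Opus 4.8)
The plan is to transcribe the proof of Theorem \ref{theobimoduleLG}(ii), replacing $LG$ by $A$ throughout and inducting on the number of strands. Concretely, the claimed identity is the instance $m=n+1$ of the $A$-analogue
$$
A_m = \sum_r A_{m-1} s_{m-1}^{r} A_{m-1} + A_{m-3}\, s_{m-1}^{-1} s_{m-2} s_{m-1}^{-1},
$$
whose distinguished summand is spanned, on the last strands, by the image of the generator $s_3^{-1} s_2 s_3^{-1}$ that completes $\sum_r LG_3 s_3^{r} LG_3$ to $LG_4$. What legitimises this transcription is that the $LG$-argument, together with the auxiliary Lemma \ref{lemreduction} on which it rests, draws on only two kinds of input. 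On the one hand it uses the braid relations and the cubic relation $r_1$, which hold in every $A_m$ by construction, each $A_m$ being a quotient of $K B_m$ by $r_1,r_2,r_3$. On the other hand it uses a finite list of \emph{explicit} reductions, all taking place inside $LG_3$ or $LG_4$: the base cases $m=3,4$ of both the lemma and the theorem, the chosen basis of $LG_3$, and the chosen basis of $LG_4$. Since $A_3 \simeq LG_3$ and $A_4 \simeq LG_4$ by the very definition of the $A_m$, each such statement about at most four consecutive strands transports verbatim to $A$.

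First I would record the $A$-analogue of Lemma \ref{lemreduction}, again with $LG$ replaced by $A$, and prove it by copying the original argument: the base case $m=4$ is an identity in $A_4 = LG_4$, while for larger $m$ the eight braids $s_{m-2}^{\pm1}(s_{m-1}^{-1}s_{m-2}s_{m-1}^{-1})$, $s_{m-3}^{\pm1}(s_{m-1}^{-1}s_{m-2}s_{m-1}^{-1})$ and their right-hand counterparts are conjugate to braids supported on four, respectively three, consecutive strands, so the already-established four- and three-strand cases apply, using that $A_{m-3}$ commutes with $s_{m-1}^{-1}s_{m-2}s_{m-1}^{-1}$. Throughout, the symbols $A_k$ for $k<m$ are read as the images of the structural maps $A_k \to A_m$; since the whole discussion concerns spanning and never the injectivity of these maps, nothing is lost by the fact that they are not yet known to be embeddings.

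With the lemma in hand I would run the same induction on $m$. Writing $\mathcal{A}$ for the right-hand side above, viewed as a subspace of $A_m$, one has $1\in\mathcal{A}$, so it suffices to prove $s_k^{\pm1}\mathcal{A}\subset\mathcal{A}$ for $k\in\{m-1,m-2,m-3\}$. For $k<m-1$ this is immediate from the $A$-version of the lemma, so assume $k=m-1$. Applied to the distinguished summand, $s_{m-1}^{\pm1}A_{m-3}(s_{m-1}^{-1}s_{m-2}s_{m-1}^{-1})$ equals $A_{m-3}(s_{m-1}^{u}s_{m-2}s_{m-1}^{-1})$ after commuting $A_{m-3}$ outward, and the three-strand word $s_{m-1}^{u}s_{m-2}s_{m-1}^{-1}$ falls into $\mathcal{A}$ by the chosen basis of $A_3$. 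Applied to $A_{m-1}s_{m-1}^{r}A_{m-1}$, one expands the outer factor $A_{m-1}$ by the inductive hypothesis into $A_{m-2}s_{m-2}^{r'}A_{m-2}$ and $A_{m-4}(s_{m-2}^{-1}s_{m-3}s_{m-2}^{-1})$, commutes $s_{m-1}^{\pm1}$ past $A_{m-2}$ and $A_{m-4}$, whose generators all have index at most $m-3$, and reduces each resulting word $s_{m-1}^{\pm1}s_{m-2}^{r'}s_{m-1}^{r}$, respectively $s_{m-1}^{\pm1}(s_{m-2}^{-1}s_{m-3}s_{m-2}^{-1})s_{m-1}^{r}$, by the three-strand case inside $A_3$, respectively the chosen basis of $A_4=LG_4$; the $A$-version of the lemma then returns everything into $\mathcal{A}$.

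I do not expect a genuine obstacle, in keeping with the assertion that the adaptation is immediate; the one point demanding care is purely organisational. One must check that every explicit reduction invoked in the $LG$-proof is supported on at most four consecutive strands, since this is exactly the range in which $A_m\simeq LG_m$ holds without further hypotheses. Because the induction of Theorem \ref{theobimoduleLG} was arranged so as to push each reduction down to three or four strands, no appeal is ever made to $A_m\simeq LG_m$ for $m\geq 5$; in particular the present statement is independent of the isomorphism $A_5\simeq LG_5$, and once this locality has been verified the transcription goes through unchanged.
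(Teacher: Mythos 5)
Your proposal is correct and takes exactly the paper's route: the paper's entire proof is the remark that, since $A_3 \simeq LG_3$ and $A_4 \simeq LG_4$, the proof of Theorem \ref{theobimoduleLG} (together with Lemma \ref{lemreduction}, whose explicit reductions are all supported on at most four consecutive strands) adapts immediately, and your write-up is just that transcription carried out carefully, including the correct observation that only spanning statements are needed so the maps $A_k \to A_m$ need not be injective. One small point worth recording: what your transcription yields (and what the statement must mean) is the decomposition with distinguished summand $A_{n-2}\, s_n^{-1} s_{n-1} s_n^{-1}$, consistent with Theorem \ref{theobimoduleLG} and with the use made in Theorem \ref{theounicite}; the word $s_{n-1}^{-1} s_n s_{n-1}^{-1}$ appearing in the printed statement already lies in $A_n s_n A_n$ and is an evident misprint.
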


This implies immediately that $A_n$ is finite dimensional for all $n$.
The precise dimension of $A_n$ is the content of conjecture \ref{conjdimAn}.\\

Using the methods of section \ref{soussectchar}, the character
table of $\Gamma_5$ and the fact (see \cite{G32}) that $H_5$ is a flat
deformation of $K \Gamma_5$ enable us to get
the list of irreducible representations of $H_5$ which
factor through $H_5$. This yields $\dim A_5 = 1764 = \dim LG_5$, whence
the following evidence for conjecture \ref{conjiso}.

\begin{theor} $A_5 \simeq LG_5$.
\end{theor}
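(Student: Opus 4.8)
The plan is to deduce the isomorphism from the surjection $A_5 \onto LG_5$ by a dimension count. Since $a,b,c$ are algebraically independent, the algebra $H_5 \cong \overline{K}\Gamma_5$ is split semisimple and finite-dimensional (as $\Gamma_5$ is finite), so both of its quotients $A_5 = H_5/(r_2,r_3)$ and $LG_5$ are finite-dimensional semisimple algebras. Because the natural map $A_5 \onto LG_5$ is a surjective algebra homomorphism, it is an isomorphism as soon as $\dim A_5 = \dim LG_5$. The right-hand side is read directly off the dimension table of the Bratteli diagram in section \ref{sectstruct}: summing the squares of the dimensions appearing at level $r=5$ gives $\dim LG_5 = 1^2 + 10^2 + 20^2 + 10^2 + 1^2 + 6^2 + 15^2 + 6^2 + 4^2 + 20^2 + 20^2 + 4^2 + 4^2 + 4^2 + 1^2 = 1764$. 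So the whole task is to show $\dim A_5 = 1764$.

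Since $H_5$ is split semisimple, writing $H_5 = \bigoplus_\rho M_{\dim\rho}(\overline{K})$ over the irreducible representations $\rho$, the quotient $A_5$ is the sum of exactly those matrix blocks whose representation does not annihilate the ideal $(r_2,r_3)$; that is $A_5 = \bigoplus_{\rho\in S} M_{\dim\rho}(\overline{K})$, where $S$ is the set of irreducible representations of $H_5$ that factor through $A_5$. Hence $\dim A_5 = \sum_{\rho\in S}(\dim\rho)^2$, and everything reduces to the determination of $S$.

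To identify $S$, note that $(r_2,r_3)$ is generated as a two-sided ideal by the images of $r_2 \in H_3$ under the three embeddings $\langle s_1,s_2\rangle$, $\langle s_2,s_3\rangle$, $\langle s_3,s_4\rangle$, together with the images of $r_3 \in H_4$ under the two embeddings $\langle s_1,s_2,s_3\rangle$, $\langle s_2,s_3,s_4\rangle$. Each of the three copies of $H_3$ is contained in one of the two copies of $H_4$, and $LG_4 = H_4/(r_2,r_3)$ (section \ref{sect6LG4}); therefore $\rho$ lies in $S$ if and only if its restriction to each of the two embedded copies of $H_4$ factors through $LG_4$, i.e.\ every irreducible constituent of both restrictions belongs to the ten-element list $S_a,S_b,S_c,U_{b,a},U_{a,c},U_{c,a},U_{a,b},V_{c,a,b},V_{b,a,c},W_a$ of $LG_4$-representations of section \ref{soussectchar}. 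The restriction to the standard copy $\langle s_1,s_2,s_3\rangle$ is governed by the branching rule of the tower $H_2 \subset H_3 \subset H_4 \subset H_5$, which by the commuting diagram of \cite{G32} coincides with that of $\overline{K}\Gamma_2 \subset \dots \subset \overline{K}\Gamma_5$; the restriction to the non-standard copy $\langle s_2,s_3,s_4\rangle$ is obtained from it by the flip automorphism $\phi: s_i \mapsto s_{5-i}$ of $B_5$, which exchanges the two embeddings, preserves the set $\{a,b,c\}$ of eigenvalues, and permutes the irreducible characters while keeping the $LG_4$-list (which is $\phi$-stable) invariant.

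With this criterion in hand, the determination of $S$ and of $\dim A_5$ becomes an explicit finite computation: using the character table of $\Gamma_5$, the explicit matrix models of the irreducible representations of $H_5$ stored in CHEVIE, and the branching data above, one checks for each irreducible representation of $H_5$ whether both of its $H_4$-restrictions decompose over the $LG_4$-list, and sums the squares of the dimensions of those that survive, obtaining $\dim A_5 = 1764$. Since the $LG_5$-representations $S'$ read off the Bratteli diagram all lie in $S$ (because $LG_4$ embeds into $LG_5$ both as $m \mapsto m\otimes 1$, matching $\langle s_1,s_2,s_3\rangle$, and as $1\otimes m$, matching $\langle s_2,s_3,s_4\rangle$), we have $S'\subseteq S$, and $\sum_{S'}(\dim\rho)^2 = 1764 = \sum_{S}(\dim\rho)^2$ forces $S = S'$; hence $A_5 \cong LG_5$. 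I expect the last paragraph to be the main obstacle: controlling the non-standard embedding through $\phi$ and carrying out the enumeration reliably enough to be certain that no spurious matrix block beyond the $LG_5$-diagram survives, i.e.\ that the surviving total is exactly $1764$ and not larger.
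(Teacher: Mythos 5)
Your proposal is correct and follows essentially the same route as the paper: both identify the irreducible representations of $H_5 \simeq \overline{K}\Gamma_5$ (split semisimple, $H_5$ being a flat deformation of $K\Gamma_5$ by \cite{G32}) which survive in $A_5$ via the branching rule to $H_4$ and the list of $LG_4$-representations from section \ref{soussectchar}, compute $\dim A_5 = 1764 = \dim LG_5$ using the character table of $\Gamma_5$, and conclude through the surjection $A_5 \onto LG_5$. One remark: the obstacle you anticipate in your last paragraph is vacuous, since the two parabolic copies of $H_4$ inside $H_5$ are conjugate (conjugation by $\delta = s_1s_2s_3s_4$ sends $s_i$ to $s_{i+1}$ for $i \leq 3$), so a representation annihilates the image of $r_3$ in one copy if and only if it does so in the other, and the flip-automorphism bookkeeping is unnecessary.
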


The main result of this section is that the tower of algebras $(A_n)_{n \geq 1}$ can be endowed  with a unique trace $Tr_n$ which computes the Links-Gould invariant. In addition
we prove that the relations $r_1$, $r_2$ and $r_3$ are a complete set of relations for the Links-Gould invariant, i.e. one can recursively compute the Links-Gould invariant using this relations. 

Given an integer $n\geq 1$, consider the natural embedding of $B_{n}$ into $B_{n+1}$. Denote by $\phi_n$ its extension to an homomorphism from $A_n$ to $A_{n+1}$. 

\begin{theor} \label{theounicite}
For $z \in K$, there exists a family of traces $Tr_n:A_n\rightarrow K$, $n \geq 1$,
such that
\begin{itemize}
\item $Tr_{n+1}(\phi_n(\beta))=zTr_n(\beta)$ for all $\beta \in A_n$.
\item $Tr_n(\alpha \beta)=Tr_n(\beta \alpha)$ for all $\alpha$, $\beta\in A_n$.
\item $Tr_{n+1}(\phi_n(\beta) s_n^{\pm 1}) = Tr_n(\beta)$ for all $\beta \in A_n$ (`Markov property').
\item $Tr_1=1$
\end{itemize}
if and only if $z = 0$. If $z=0$, this family is unique. The same statements hold with $(A_n)$ replaced by $(LG_n)$.
\label{trace}
\end{theor}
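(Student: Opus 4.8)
The plan is to split the statement into three independent claims --- existence of the family when $z=0$, its uniqueness when $z=0$, and the impossibility of any family when $z\neq 0$ --- and to reduce each to the inductive bimodule description of Theorem \ref{theobimoduleA}. For existence I would not construct the trace by hand, but use the Links--Gould invariant itself: set $Tr_n(\beta)=\mathfrak{LG}(\widehat{\beta})$ for $\beta\in B_n$, where $\widehat{\beta}$ is the braid closure. Since $\mathfrak{LG}$ is the Reshetikhin--Turaev invariant attached to $V(0,\alpha)$ and to the rescaled $R$-matrix of Section \ref{sectLG}, it factors through $LG_n$, hence through $A_n$ via $A_n\onto LG_n$, so $Tr_n$ is a well-defined linear form on each algebra. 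The trace property is invariance of $\mathfrak{LG}$ under conjugation of braids; the Markov property $Tr_{n+1}(\phi_n(\beta)s_n^{\pm1})=Tr_n(\beta)$ is invariance under stabilisation; and $Tr_1=1$ is the normalisation $\mathfrak{LG}(\bigcirc)=1$. The reduction axiom reads $Tr_{n+1}(\phi_n(\beta))=\mathfrak{LG}(\widehat{\beta}\sqcup\bigcirc)$, and since $\mathfrak{LG}$ vanishes on split links --- the extra circle contributing the factor $\tr\mu=\sum_i\mu_i=t_0^{-1}-t_1-t_0^{-1}+t_1=0$ --- this is $0$, which matches $z\,Tr_n(\beta)$ exactly when $z=0$.

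For uniqueness when $z=0$ I would show by induction on $n$ that the four axioms determine $Tr_n$ completely, the base cases $Tr_1=1$ and $Tr_2(1)=0$, $Tr_2(s_1^{\pm1})=1$ being immediate. For the inductive step I use $A_{n+1}=A_n+A_n s_n A_n+A_n s_n^{-1}A_n+A_{n-2}s_{n-1}^{-1}s_n s_{n-1}^{-1}$ from Theorem \ref{theobimoduleA}: on $A_n$ the reduction axiom gives $Tr_{n+1}=z\,Tr_n=0$; on $A_n s_n^{\pm1}A_n$ cyclicity followed by Markov gives $Tr_{n+1}(\alpha s_n^{\pm1}\beta)=Tr_{n+1}(\beta\alpha\,s_n^{\pm1})=Tr_n(\beta\alpha)$; and on the last summand, writing $\gamma\in A_{n-2}$ (which commutes with $s_{n-1}$), cyclicity and Markov give $Tr_{n+1}(\gamma s_{n-1}^{-1}s_n s_{n-1}^{-1})=Tr_{n+1}(s_{n-1}^{-1}\gamma s_{n-1}^{-1}s_n)=Tr_n(s_{n-1}^{-1}\gamma s_{n-1}^{-1})$. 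Each value is expressed through $Tr_n$, known by induction, so at most one trace can satisfy the axioms. This determination is valid for arbitrary $z$, and therefore also furnishes the uniqueness needed in the ``only if'' direction once existence has been pinned down.

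For the necessity of $z=0$ I would read the same formulas in the opposite direction, expressing the value on every braid word of length $\le 4$ as an explicit polynomial in $z$ and in $a,b,c$. Requiring that the resulting functional actually be a trace on $A_4\simeq LG_4$ --- equivalently, that it be cyclic on the ten matrix blocks of $LG_4$ exhibited in Section \ref{soussectchar} and annihilate the relation $r_3$ --- is a finite linear system with coefficients in $\Q(t_0,t_1)$, which the explicit matrix models of Tables \ref{tab:reps25a}--\ref{tab:reps25b} make mechanical to solve; I expect it to be solvable only for $z=0$, in agreement with $z$ being the quantum dimension $\tr\mu=0$. This consistency check is the main obstacle: the sum in Theorem \ref{theobimoduleA} is \emph{not} direct, so one must verify that the forced values coincide on the overlaps of the four summands, and it is exactly here that $z=0$ becomes unavoidable. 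By the reduction carried out in the proof of Theorem \ref{theobimoduleA}, every overlap relation is a conjugate of one already occurring at level $\le 5$, so the verification is finite and can be performed inside $A_5\simeq LG_5$.

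Finally, the corresponding statements for the tower $(LG_n)$ follow by the identical arguments: Theorem \ref{theobimoduleLG} provides the same bimodule decomposition for $LG_n$, so the uniqueness induction applies verbatim, while $\mathfrak{LG}$ already lives on $LG_n$ and gives existence with $z=0$ directly, the passage through $A_n\onto LG_n$ being unnecessary in this case.
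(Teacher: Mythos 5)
Your existence argument (defining $Tr_n(\beta)=\mathfrak{LG}(\widehat{\beta})$, getting well-definedness from the relations $r_1,r_2,r_3$, cyclicity from re-opening the closure, Markov from stabilisation, and $z=0$ from vanishing on split links) and your inductive uniqueness scheme via the bimodule decomposition are exactly the paper's. But the ``only if'' direction is a genuine gap: you never derive a contradiction for $z\neq 0$ --- you only \emph{expect} your consistency system to force $z=0$, and your claim that every overlap relation between the non-direct summands is a conjugate of one occurring at level $\leq 5$ is asserted, not proved. The paper closes this direction by a different, concrete argument: since $A_4$ is split semisimple with ten matrix blocks, any trace on $A_4$ is a linear combination of the ten matrix traces; the axioms force $Tr_4$ to take the values $(z^2,z,z,z,z,1,1,1,1,z^2)$ on the family $(s_3, s_1s_3, s_1^{-1}s_3, s_2s_3, s_2^{-1}s_3, s_1s_2s_3, s_1s_2^{-1}s_3, s_1^{-1}s_2s_3, s_1^{-1}s_2^{-1}s_3, s_3^{-1})$, and the $10\times 10$ matrix of matrix-trace values on this family is (computer-verified) invertible, so $Tr_4$ is determined everywhere as a polynomial function of $z$. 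Evaluating at $s_1^{-1}s_3^{-1}$ and $s_2^{-1}s_3^{-1}$, which the Markov property forces to equal $z$, then yields two constraints $\alpha' z(z-\alpha)=0$ and $\beta' z(z-\beta)=0$ with $\alpha\neq\beta$ in $K^\times$, whence $z=0$. Nothing in your sketch produces such incompatible constraints; without them your ``necessity'' paragraph is a plan for a computation, not a proof.

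There is a second, repairable flaw in your uniqueness induction: you handle the wrong special summand. You apply cyclicity and Markov to $\gamma=\beta s_{n-1}^{-1}s_n s_{n-1}^{-1}$, but this element already lies in $A_n s_n A_n$ (the outer factors $s_{n-1}^{-1}$ are in $A_n$), so it needs no separate treatment; you have followed a misprint in the statement of theorem \ref{theobimoduleA}. The genuinely new summand --- consistent with theorem \ref{theobimoduleLG} and with the basis element $s_3^{-1}s_2s_3^{-1}$ of $LG_4$ --- is $A_{n-2}\,s_n^{-1}s_{n-1}s_n^{-1}$, in which $s_n$ occurs \emph{twice}, so the Markov property cannot be applied directly. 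The paper's step here is essential: by cyclicity, and since $\beta\in A_{n-2}$ commutes with $s_n$, one has $Tr_{n+1}(\beta s_n^{-1}s_{n-1}s_n^{-1})=Tr_{n+1}(\beta s_n^{-2}s_{n-1})$, and the cubic relation $r_1$ rewrites $s_n^{-2}$ as a linear combination of $s_n^{-1}$, $1$, $s_n$, reducing to the cases $A_n s_n^r A_n$ that you did treat. With that one extra reduction your induction determines $Tr_{n+1}$ on all of $A_{n+1}$; as written, it does not.
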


\begin{proof}
First we prove that if the trace exists then $z$ is equal to zero. Since $A_4$ is semi-simple, it is isomorphic to a direct sum of ten matrix algebras and therefore a trace on $A_4$ is a linear combination of matrix traces. By using the Markov properties above,
we get that the value of $Tr_4$ on the family $\mathcal{F} = (s_3,
s_1 s_3, s_1^{-1} s_3, s_2 s_3, s_2^{-1} s_3, s_1 s_2 s_3, s_1 s_2^{-1} s_3,
s_1^{-1} s_2 s_3, s_1^{-1} s_2^{-1} s_3,s_3^{-1} )$ is $(z^2,z,z,z,z,1,1,1,1,z^2)$.
On the other hand, we check by computer that the values of the 10 matrix traces
on this family provide an invertible $10 \times 10$ matrix. As a consequence,
the values of $Tr_4$ on this family determines its value on arbitrary
elements of $B_4$, as polynomial functions of $z$. This enables us to compute
the value of $Tr_4$ on $s_1^{-1} s_3^{-1}$ and $s_2^{-1} s_3^{-1}$. Since
its value has to be $z$ in both cases by the Markov property, we get two equations
on $z$, which have the form $\alpha' z (z-\alpha) = 0$ and
$\beta' z (z-\beta) = 0$ for some $\alpha, \alpha', \beta, \beta' \in K^{\times}$,
with $\alpha \neq \beta$. This clearly implies $z = 0$.

Existence follows from the existence of the Links-Gould invariant and unicity from the careful analysis of the previous section.
In more details, define $T_n$ by $T_n(\beta)=\mathfrak{LG}(\widehat{\beta})$ for all $\beta$ and $n\geq 1$. First $T_n$ is well defined on $A_n$ since the relations $r_1$, $r_2$ and $r_3$ are satisfied by the Links-Gould invariant. In addition since the Links-Gould invariant vanishes on split links, it implies that $T_n(\beta)=0$ for all $\beta\in \mathrm{Im}(\phi_{n-1})$ and $T_1=1$ is by the normalization of the Links-Gould invariant (one on the unknot).
We have also $T_{n+1}(\beta s_n^{\pm 1})=T_n(\beta)$ for all $\beta \in \mbox{Im}(\phi_n)$,
since the Links-Gould invariant is invariant under the first Reidemeister move. It remains to say that $T_n(\beta \alpha)=T_n(\alpha \beta)$ is satisfied  because the Links-Gould invariant is trully an invariant of links, it does in particular not depend on where you open the link to compute it (see Remark 5.1 in \cite{PATGEER}). Given two braids $\alpha$ and $\beta$ consider the topological partial closure of the braids $\alpha\beta$ and $\beta\alpha$ decribed in Figure \ref{partial}. It can be easily seen that these are two different openings of the topological closure of $\alpha\beta$ (which is of course isotopic to the topological closure of $\beta\alpha$.) This finishes the proof of the existence of $Tr_n$. \\
\begin{center}
\begin{figure}[h!]
\input{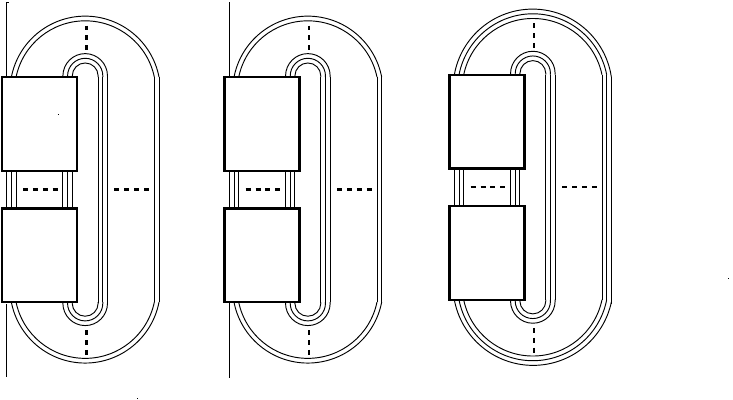_t}
\caption{ Partial closure of $\alpha\beta$ and $\beta\alpha$ and closure of $\alpha\beta$.}
\label{partial}
\end{figure}
\end{center}
The algebra $A_3$ and $A_4$ are respectively isomorphic to the algebras $LG_3$ and $LG_4$. 


We use the bimodule decomposition afforded by theorem \ref{theobimoduleA}.
Suppose by induction that $Tr_k$ is unique for $k\leq n$. Let  $\gamma$  be an element of in $A_{n+1}=A_n s_n^r A_n+ A_{n-2}s_n^{-1}s_{n-1}s_n^{-1}$ ($r\in\{-1,0,+1\}$) we can suppose that either $\gamma=\alpha_1 s_n^r \alpha_2$ or $\gamma= \beta s_n^{-1}s_{n-1}s_n^{-1}$ with $\alpha_1$, $\alpha_2$ $\in A_n$ and $\beta \in A_{n-2}$.
In the first case if $r=0$ we have $Tr_{n+1}(\gamma)=0$ and if $r=\pm 1$ we have $Tr_{n+1}(\gamma)=Tr_n(\alpha_1 \alpha_2)$. In the second case we have $Tr_{n+1}(\gamma)=Tr_{n+1}(\beta s_n^{-2}s_{n-1})
=Tr_{n+1}(s_{n-1} \beta s_n^{-2})
$. By applying the cubic relation $r_1$ to the factor $s_n^{-2}$ we reduce to the previous case. Hence $Tr_{n+1}$ is unique. 
It is direct computation given the basis for $A_1$, $A_2$ and $A_3$ to prove the trace is unique for these algebras. This finishes the proof of unicity.

The case of $LG_n$ is similar, since $LG_4 = A_4$.
\end{proof}

\begin{remark}
Given an integer $n\geq 1$, for all $1\leq k\leq n$ consider the natural embedding of $B_k\times B_{n-k}$ into $B_n$ (see Figure (\ref{inj})). Denote by $\phi_k$ its extension to an homomorphism from $A_k\otimes A_{n-k}$ to $A_n$. Define $I_n$ the subvector space of $A_n$ generated by the images of the $\phi_k$ ($1\leq k \leq n$). 

\begin{center}
\begin{figure}[h!]
\input{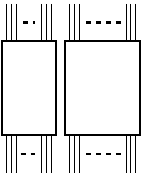_t}
\caption{Injection of $B_k\times B_{n-k}$ into $B_n$.}
\label{inj}
\end{figure}
\end{center}

The fact that $z$ is equal to zero and an induction argument shows that the unique trace $Tr_n$ on $A_n$ vanishes on $I_n$. This implies that the Links-Gould invariant vanishes on split links (see \cite{Ishiisplit} for a different proof).
\end{remark}

\begin{cor}
The relations $r_1$, $r_2$ and $r_3$ are a complete set of skein relations for the Links-Gould invariant.
\end{cor}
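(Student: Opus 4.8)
The plan is to read off the corollary from Theorem \ref{theounicite} and its proof: saying that $r_1$, $r_2$, $r_3$ form a \emph{complete set of skein relations} means precisely that the value of $\mathfrak{LG}$ on an arbitrary link can be computed using these three relations alone, together with the topological invariances already built into the Markov trace. First I would record the two halves of completeness separately. For soundness, recall that by Alexander's theorem every link is a closure $\widehat{\beta}$, that $\mathfrak{LG}(\widehat{\beta}) = Tr_n(\beta)$, and that $Tr_n$ factors through $A_n = KB_n/(r_1,r_2,r_3)$; thus the invariant is left unchanged by $r_1$, $r_2$, $r_3$, which is exactly the content of the existence part of Theorem \ref{theounicite}.

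For sufficiency I would exhibit an explicit terminating algorithm, which is nothing but the descent already performed in the unicity part of Theorem \ref{theounicite}. Using the bimodule decomposition
$$A_{n+1} = A_n\, s_n^{r} A_n + A_{n-2}\, s_n^{-1} s_{n-1} s_n^{-1} \qquad (r \in \{-1,0,1\})$$
afforded by Theorem \ref{theobimoduleA}, any class in $A_{n+1}$ is rewritten — using only $r_1$, $r_2$, $r_3$ — as a combination of terms $\alpha_1 s_n^r \alpha_2$ with $\alpha_i \in A_n$ and of one exceptional term $\beta\, s_n^{-1} s_{n-1} s_n^{-1}$ with $\beta \in A_{n-2}$. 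On each such term the trace drops one strand: the $r=0$ terms vanish (vanishing on split links, i.e.\ $z=0$), the $r=\pm 1$ terms become $Tr_n(\alpha_2\alpha_1)$ by cyclicity and the Markov property, and the exceptional term reduces, after a cyclic permutation and commuting $\beta$ past $s_n$, to $Tr_{n+1}(s_{n-1}\beta\, s_n^{-2})$, at which point the cubic relation $r_1$ rewrites $s_n^{-2}$ inside the span of $\{1, s_n, s_n^{-1}\}$ and returns us to the preceding cases. Iterating this descent terminates at $A_1$, where $Tr_1 = 1$, so the invariant is determined.

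The only thing requiring care — and the expected obstacle — is conceptual rather than computational: I must make clear that every rewriting used in the descent is an application of $r_1$, $r_2$ or $r_3$ (or of an isotopy that $\mathfrak{LG}$ respects by construction), and that the trace axioms are not additional skein relations but genuine invariances of $\mathfrak{LG}$ (Reidemeister~I for the Markov move, independence of the opening point for cyclicity, and the remark following Theorem \ref{theounicite} for vanishing on split links). I would stress that completeness in this sense is strictly weaker than, and must not be conflated with, the still-open assertion that $r_1,r_2,r_3$ generate the entire defining ideal of $LG_n$ (Conjecture \ref{conjiso}): here we claim only that these relations suffice to \emph{compute} the invariant, which the terminating descent above already guarantees.
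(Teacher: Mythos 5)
Your proposal is correct and follows exactly the paper's route: the paper proves this corollary as an immediate consequence of Theorem \ref{trace}, whose existence part gives your soundness half and whose uniqueness part is precisely the terminating descent (via the bimodule decomposition of Theorem \ref{theobimoduleA}, cyclicity, the Markov property, $z=0$, and the cubic relation applied to $s_n^{-2}$) that you spell out. Your closing caveats — that the trace axioms are isotopy invariances of $\mathfrak{LG}$ rather than extra skein relations, and that completeness-for-computation is weaker than Conjecture \ref{conjiso} — are accurate readings of the paper, just made explicit where the paper leaves them implicit.
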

\begin{proof}
It is an immediate consequence of Theorem \ref{trace}.
\end{proof}

\begin{remark}
 Notice first that the relations $r_1$, $r_2$ and $r_3$ are sufficient to compute the Links-Gould invariant. In addition using the representations of section \ref{secthecke}, one can deduce relations expressing the elements $s_3^{\pm}s_2^{-1}s_1s_2^{-1}s_3^{\pm}$, 
 in the chosen basis of $A_4$
 which could in pratice simplify a recursive computation. All these relations are of course consequences of $r_1$, $ r_2$ and $r_3$
 \end{remark}

As noticed in the proof of the theorem, the Markov trace on $A_4$ is a linear
combination of the matrix traces on the irreducible
representations of $A_4$, namely
$$
Tr_4 = \sum_{i=1}^{10} a_{\chi_i} \mathrm{tr}_{\chi_i}
$$
where $i \in \{ 1,\dots, 10 \}$,
$a_{\chi_i} \in K$, and
$\chi_i$ is the $i$-th irreducible representation of $A_4$, following the order
chosen at the end of section \ref{secthecke}. For comparaison with other
traces factoring through $H_4$, we provide these coefficients
in table \ref{tablecoefs}.
\begin{table}
$$
a_{\chi_1} = {\frac {{c}^{2}{a}^{4}{b}^{2}+{a}^{4}bc+{a}^{4}-{c}^{2}{a}^{3}{b}^{3}-{a}^{3}{c}^{3}{b}^{2}+{b}^{3}{c}^{3}{a}^{2}-{a}^{2}{c}^{2}
{b}^{2}+a{c}^{2}{b}^{3}+a{c}^{3}{b}^{2}+{b}^{2}{c}^{2}}{a \left( a-b \right)  \left( a-c \right)  \left( {a}^{2}+{c}^{2}
 \right)  \left( {b}^{2}+{a}^{2} \right) }}
$$
{}
$$
a_{\chi_2} =-{\frac { \left( ab+1 \right)  \left( {a}^{3}b-c{a}^{2}b-ab+{c}^{2} \right) {c}^{2}}{ \left( a-c \right)  \left( b-c \right) 
 \left( {a}^{2}+{c}^{2} \right)  \left( b{a}^{2}-{c}^{3} \right) }}
, \ \ \ 
a_{\chi_3} ={\frac { \left( ac+1 \right)  \left( {a}^{3}c-c{a}^{2}b-ac+{b}^{2} \right) {b}^{2}}{ \left( a-b \right)  \left( b-c \right) 
 \left( {b}^{2}+{a}^{2} \right)  \left( -{b}^{3}+c{a}^{2} \right) }}
$$
{}
$$
a_{\chi_4} =-{\frac { \left( ac+1 \right)  \left( c{a}^{2}-cab-a-c \right) {b}^{2}}{a \left( a-b \right)  \left( b-c \right)  \left( b+c
 \right)  \left( {b}^{2}+{a}^{2} \right) }}
$$
{}
$$
a_{\chi_5} =-{\frac { \left( ab+1 \right)  \left( {a}^{4}bc-b{c}^{2}{a}^{3}+{a}^{3}b-{a}^{3}c-{b}^{3}{a}^{2}c+{a}^{2}{c}^{2}{b}^{2}-c{a}^{2}
b+a{c}^{2}{b}^{3}-a{c}^{3}{b}^{2}+{b}^{2}ac+c{b}^{3}-{b}^{2}{c}^{2} \right) c}{a \left( a-c \right)  \left( b-c \right)  \left( 
{a}^{2}+{c}^{2} \right)  \left( -{b}^{3}+c{a}^{2} \right) }}
$$
{}
$$
a_{\chi_6} = {\frac { \left( ab+1 \right)  \left( b{a}^{2}-cab-a-b \right) {c}^{2}}{a \left( a-c \right)  \left( b-c \right)  \left( b+c
 \right)  \left( {a}^{2}+{c}^{2} \right) }}
$$
{}
$$
a_{\chi_7} ={\frac { \left( ac+1 \right)  \left( {a}^{4}bc-c{a}^{3}{b}^{2}+{a}^{3}c-{a}^{3}b-{a}^{2}{c}^{3}b+{a}^{2}{c}^{2}{b}^{2}-c{a}^{2}b
+a{c}^{3}{b}^{2}+a{c}^{2}b-a{c}^{2}{b}^{3}-{b}^{2}{c}^{2}+{c}^{3}b \right) b}{a \left( a-b \right)  \left( b-c \right)  \left( {
b}^{2}+{a}^{2} \right)  \left( b{a}^{2}-{c}^{3} \right) }}
$$
{}
$$
a_{\chi_8} ={\frac {{c}^{2}b \left( ac+1 \right)  \left( ab+1 \right) }{a \left( b-c \right)  \left( b+c \right)  \left( b{a}^{2}-{c}^{3}
 \right) }}
, \ \ 
a_{\chi_9} =-{\frac {{b}^{2}c \left( ac+1 \right)  \left( ab+1 \right) }{a \left( b-c \right)  \left( b+c \right)  \left( -{b}^{3}+c{a}^{2}
 \right) }}
$$
{}
$$
a_{\chi_{10}} ={\frac {cb \left( ac+1 \right)  \left( ab+1 \right)  \left( bc+{a}^{2} \right) }{a \left( b{a}^{2}-{c}^{3} \right)  \left( -{b}^
{3}+c{a}^{2} \right) }}
$$

\caption{Coefficients of the unique Markov trace on $A_4 = LG_4$.}
\label{tablecoefs}
\end{table}

\section{On the image and kernel of $B_n \to LG_n(\alpha)^{\times}$}
\label{sectimagenoyau}

We recall that $\mathcal{LG}_n(\alpha)$ has been defined
in section \ref{sectstruct} as the \emph{Lie} subalgebra of $\End(V(0,\alpha)^{\otimes n})$ generated
by the $\Omega_{ij}'s$.

\subsection{Structure of  $\mathcal{LG}_n(\alpha)$ and Zariski closure of $B_n$ inside
$LG_n(\alpha)^{\times}$}

The proof of the following proposition is parallel to its analogue for $BMW$
(see \cite{BMW} prop. 5.1),
the algebra $LG_n(\alpha)$ playing the role of the Birman-Wenzl-Murakami algebra
thanks to theorem \ref{theosurjcommut}.

\begin{prop} For generic values of $\alpha$, the Lie algebra $\mathcal{LG}_n(\alpha)$
is reductive with center $\kk T_n$.
\end{prop}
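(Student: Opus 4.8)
The plan is to follow verbatim the structure of \cite{BMW}, prop. 5.1, the essential input being theorem \ref{theosurjcommut}. That theorem says the image of $\U\mathcal{T}_n$ in $\End(V(0,\alpha)^{\otimes n})$ is the full commutant $LG_n(\alpha)$; equivalently, the associative algebra generated by the Lie algebra $\mathcal{LG}_n(\alpha)$ is exactly $LG_n(\alpha)$. For generic $\alpha$ the $\U\g$-module $V(0,\alpha)^{\otimes n}$ is a direct sum of typical Kac modules, hence semisimple, so by the double centralizer theorem it is a semisimple $LG_n(\alpha)$-module, and therefore a semisimple $\mathcal{LG}_n(\alpha)$-module as well. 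Since $\mathcal{LG}_n(\alpha)$ acts faithfully by definition, and a Lie algebra over a field of characteristic $0$ admitting a faithful completely reducible finite-dimensional module is reductive, this yields both the reductivity and a decomposition $\mathcal{LG}_n(\alpha) = Z \oplus \mathfrak{s}$, where $Z = Z(\mathcal{LG}_n(\alpha))$ and $\mathfrak{s} = [\mathcal{LG}_n(\alpha),\mathcal{LG}_n(\alpha)]$ is semisimple.

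To identify the center, I would write $LG_n(\alpha) = \bigoplus_\lambda \End(W_\lambda)$, with $(W_\lambda)$ the simple modules indexed by the nodes $[a,k]_n$ of the Bratteli diagram. Each $W_\lambda$ is already irreducible over $\mathcal{LG}_n(\alpha)$, since this Lie algebra generates $\End(W_\lambda)$ associatively; so by Schur's lemma every $z \in Z$ acts on $W_\lambda$ by a scalar $c_\lambda(z)$, and the map $z \mapsto (c_\lambda(z))_\lambda$ is injective because $\bigoplus_\lambda W_\lambda$ is a faithful $LG_n(\alpha)$-module. Now decompose each generator as $\Omega_{ij} = z_{ij} + s_{ij}$ along $Z \oplus \mathfrak{s}$. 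Since $s_{ij}$ lies in the derived subalgebra and commutators act by traceless operators, $s_{ij}$ is traceless on every $W_\lambda$, whence $\tr_{W_\lambda}(\Omega_{ij}) = c_\lambda(z_{ij})\dim W_\lambda$. The crucial observation is that the $\Omega_{ij}$ are pairwise conjugate under the subgroup $\mathfrak{S}_n \subset LG_n(\alpha)$ permuting the tensor factors; as conjugation by an element of $LG_n(\alpha)$ preserves each $W_\lambda$ together with its trace, all the numbers $\tr_{W_\lambda}(\Omega_{ij})$ equal $\tr_{W_\lambda}(\Omega_{12})$. Hence $c_\lambda(z_{ij}) = c_\lambda(z_{12})$ for every $\lambda$, and by injectivity $z_{ij} = z_0 := z_{12}$ for all $i<j$.

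It remains to conclude that $Z = \kk z_0$. The Lie algebra $\mathcal{LG}_n(\alpha)$ is generated by the $\Omega_{ij} = z_0 + s_{ij}$ with $z_0$ central, so every iterated bracket of the generators lands in $\mathfrak{s}$ and thus $\mathcal{LG}_n(\alpha) \subseteq \kk z_0 + \mathfrak{s}$. As $\mathfrak{s} = [\mathcal{LG}_n(\alpha),\mathcal{LG}_n(\alpha)]$, this forces $\dim\bigl(\mathcal{LG}_n(\alpha)/\mathfrak{s}\bigr) \le 1$, that is $\dim Z \le 1$ (for a reductive Lie algebra the abelianization is canonically the center). Finally $T_n = 2\sum_{i<j}\Omega_{ij} = n(n-1)\,z_0 + 2\sum_{i<j} s_{ij}$ is central, so its $\mathfrak{s}$-component must vanish and $T_n = n(n-1)\,z_0$; since $T_n$ acts on $[a,k]_n$ by the nonzero scalar governed by the Casimir value $-2(n\alpha+k)(n\alpha+k+a+1)$ computed in the proof of theorem \ref{theosurjcommut}, we have $z_0 \neq 0$ and therefore $Z = \kk z_0 = \kk T_n$.

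I expect the only genuinely delicate points to be bookkeeping. First, the semisimplicity and faithfulness of the defining module rest entirely on theorem \ref{theosurjcommut} together with genericity of $\alpha$, so those are in hand. Second, one must check that the permutation action of $\mathfrak{S}_n$ really conjugates $\Omega_{12}$ to $\Omega_{ij}$ in the graded setting, where the graded flip operators carry Koszul signs; this is the standard covariance of the $\Omega_{ij}$ and, in any case, since only the invariance of the trace is used, the signs are harmless.
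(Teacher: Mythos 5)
Your proof is correct and follows exactly the route the paper intends: the paper's own ``proof'' is a one-line reference to prop.~5.1 of \cite{BMW}, and the argument you reconstruct (semisimplicity of $V(0,\alpha)^{\otimes n}$ plus faithfulness gives reductivity, with theorem \ref{theosurjcommut} guaranteeing that $\mathcal{LG}_n(\alpha)$ generates each $\End(W_\lambda)$ associatively; conjugacy of the $\Omega_{ij}$ forces all central components $z_{ij}$ to coincide, so the abelianization, hence the center, is at most one-dimensional; $T_n$ is central and generically nonzero) is precisely that one. The only nitpick is that $T_n$ acts on the $[a,k]_n$-component by $-2(n\alpha+k)(n\alpha+k+a+1)+2n\alpha(\alpha+1)$, i.e.\ the Casimir value corrected by the $n$ single-factor Casimirs via lemma \ref{lemCT}, which is still generically nonzero, so your conclusion stands unchanged.
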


For $\la = [a,k]_r$ we let $\rho_{\la} : \mathcal{LG}_n(\alpha) \to \gl_{d(\la)}(\kk)$
denote the corresponding representation of $\mathcal{LG}_n(\alpha)$,
and $\rho'_{\la}$ its restriction to the derived Lie algebra $\mathcal{LG}'_n(\alpha)$.
The proof of the following proposition is parallel
to propositions 5.5 and 5.6 of \cite{BMW}.
\begin{prop} For generic $\alpha$, $\rho'_{\la_1}$ is isomorphic to $\rho'_{\la_2}$
if and only if $\la_1 = \la_2$ or $d(\la_1) = d(\la_2) = 1$. If $d(\la_1),d(\la_2)>1$,
then the dual representation of $\rho_{\la_2}$ cannot be isomorphic to $\rho_{\la_1}$. 
\end{prop}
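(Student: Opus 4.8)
The plan is to exploit the reductive structure of $\mathcal{LG}_n(\alpha)$ just established. Write $\mathcal{LG}_n(\alpha) = \kk T_n \oplus \mathcal{LG}'_n(\alpha)$ with $\mathcal{LG}'_n(\alpha)$ semisimple. For $\la = [a,k]_n$ the module $M_\la = \Hom_{\U\g}(\la, V(0,\alpha)^{\otimes n})$ is simple over $\mathcal{LG}_n(\alpha)$; since $T_n$ acts on it by a scalar $t_\la$, every $\mathcal{LG}'_n(\alpha)$-submodule is a $\mathcal{LG}_n(\alpha)$-submodule, so $\rho'_\la$ is \emph{irreducible} and $\rho_\la$ is determined by the pair $(\rho'_\la, t_\la)$. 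The `if' direction is then immediate: if $\la_1 = \la_2$ there is nothing to prove, and if $d(\la_1)=d(\la_2)=1$ both $\rho'_{\la_i}$ are one-dimensional representations of a semisimple Lie algebra, hence trivial, hence isomorphic.

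First I would pin down the central character. Telescoping Lemma \ref{lemCT} along the chain $\mathcal{LG}_2 \subset \dots \subset \mathcal{LG}_n$ gives $T_n = \Delta_n(C) - \sum_{i=1}^n 1^{\otimes(i-1)}\otimes C \otimes 1^{\otimes(n-i)}$, so on $\la = [a,k]_n$ one reads off
$$
t_\la = -2(n\alpha+k)(n\alpha+k+a+1) + 2n\alpha(\alpha+1).
$$
As a polynomial in $\alpha$ its leading term is $-2n(n-1)\alpha^2$, the same nonzero value for every level-$n$ module, while $t_\la$ determines $C_\la$ and hence, exactly as in the proof of Theorem \ref{theosurjcommut}, the pair $(a,k)$; thus $\la \mapsto t_\la$ is injective. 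This settles the duality statement at once: the dual $\rho_{\la_2}^{\ast}$ has $T_n$-eigenvalue $-t_{\la_2}$, and since the leading term of $t_{\la_1}+t_{\la_2}$ is $-4n(n-1)\alpha^2 \neq 0$ we have $t_{\la_1}\neq -t_{\la_2}$ for generic $\alpha$; so $T_n$ acts by different scalars on $\rho_{\la_1}$ and $\rho_{\la_2}^{\ast}$, which therefore cannot be isomorphic (this argument does not even need $d(\la_i)>1$).

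For the `only if' direction concerning $\rho'$ I would induct on $n$. Assume $\rho'_{\la_1}\cong\rho'_{\la_2}$; equal dimensions force $d(\la_1)=d(\la_2)$, and we may assume this common value is $>1$. Restricting the isomorphism to $\mathcal{LG}'_{n-1}(\alpha)\subseteq\mathcal{LG}'_n(\alpha)$ and using the branching rule $M_\la\big|_{\mathcal{LG}_{n-1}} = \bigoplus_{\mu \to \la} M_\mu$ read off the Bratteli diagram — the parents of $[a,k]_n$ being $[a-1,k]_{n-1}, [a,k-1]_{n-1}, [a,k]_{n-1}, [a+1,k-1]_{n-1}$, each with multiplicity one — one obtains $\bigoplus_{\mu \to \la_1}\rho'_\mu \cong \bigoplus_{\nu \to \la_2}\rho'_\nu$ as $\mathcal{LG}'_{n-1}(\alpha)$-modules, the central twist coming from $T_n$ having died on the derived subalgebra. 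By the induction hypothesis the constituents of dimension $>1$ are pairwise non-isomorphic and those of dimension $1$ are all trivial, so $\la_1$ and $\la_2$ share the same set of parents of dimension $>1$ and the same number of one-dimensional parents. When all four parents have dimension $>1$ this recovers $\la$: indeed $a+k = \max\{a'+k' : [a',k']\to\la\}$ is the level of the two `top' parents $[a,k]_{n-1}, [a+1,k-1]_{n-1}$, and $a=\min\{a' : a'+k'=a+k\}$.

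The main obstacle is the control of the residual central twist in the boundary configurations, where one of the corner modules $[0,0]_{n-1}, [0,n-2]_{n-1}, [n-2,0]_{n-1}$ occurs as a parent — and in particular the base case $n=3$, where $[1,0]_3$ and $[1,1]_3$ both have dimension $2$ and only one-dimensional parents, so the inductive input is empty. There I would compare the eigenvalue multisets of the single element $\Omega_{12}=\tfrac12 T_2$, whose possible eigenvalues on any $M_\la$ are the three generic values $-2\alpha^2, -2(\alpha+1)^2, -2\alpha(\alpha+1)$ of section \ref{sectVV}, with multiplicities counting the paths through $[0,0]_2, [0,1]_2, [1,0]_2$. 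An isomorphism $\rho'_{\la_1}\cong\rho'_{\la_2}$ forces these multisets to differ by a single global shift $\sigma=\mu_2(t_{\la_1}-t_{\la_2})$; but a nonzero translation has no finite invariant set, so $\sigma=0$ and the multiplicities must agree. Running the same comparison along the whole chain $T_2,\dots,T_{n-1}$ then recovers the full parent multiset, hence $\la$. This step is where I expect the real work to lie, and it runs exactly parallel to propositions 5.5 and 5.6 of \cite{BMW}.
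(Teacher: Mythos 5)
Your reduction to spectra of $\Omega_{12}$ is the right instinct --- it is in fact the paper's \emph{entire} proof --- but the step you use to close it is invalid. An isomorphism $\rho'_{\la_1}\cong\rho'_{\la_2}$ forces $\mathrm{Sp}(\rho_{\la_1}(t_{12}))$ to be a \emph{translate} of $\mathrm{Sp}(\rho_{\la_2}(t_{12}))$: these are two a priori different sub-multisets of the three-element set $X=\{-2\alpha^2,-2(\alpha+1)^2,-2\alpha(\alpha+1)\}$, not a single set invariant under the translation, so ``a nonzero translation has no finite invariant set'' proves nothing, and your conclusion $\sigma=0$ is false in general: the one-dimensional modules $[0,0]_3$ and $[0,2]_3$ have trivial (hence isomorphic) $\rho'$, singleton spectra $\{-2\alpha^2\}$ and $\{-2(\alpha+1)^2\}$, and a nonzero shift between them. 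The missing lemma, which is the paper's key computation, is that for generic $\alpha$ the six nonzero elements of $X-X$ are pairwise distinct, so a nonzero shift carrying $S_1$ onto $S_2$ forces $|S_1|=|S_2|=1$; a singleton spectrum makes $t_{12}$, hence every $t_{ij}$, act by a scalar, and irreducibility then gives $d=1$. Once you have this lemma, your entire inductive scaffolding becomes unnecessary: the paper applies it in one step, valid for all $n$, to $t'_{12}=t_{12}-2T_n/n(n-1)$, concluding that either the $T_n$-scalars agree --- whence $\rho_{\la_1}\cong\rho_{\la_2}$ as $\mathcal{LG}_n(\alpha)$-modules and $\la_1=\la_2$ (your formula $t_{\la}=-2(n\alpha+k)(n\alpha+k+a+1)+2n\alpha(\alpha+1)$ and its injectivity are correct and would also serve here) --- or both dimensions are $1$. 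Without it, your induction is incomplete exactly where you yourself locate ``the real work'': the base case and the boundary configurations with missing or one-dimensional parents are not handled, and the one argument you offer for them does not stand.

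The dual statement is a second genuine gap: comparing $T_n$-central characters ($t_{\la_1}\neq-t_{\la_2}$) rules out $\rho_{\la_1}\cong\rho_{\la_2}^{*}$ only as modules over the full algebra $\mathcal{LG}_n(\alpha)$, whereas the statement is used in theorem \ref{theoimlie} for the restrictions $\rho'$ to the derived algebra (to exclude dual linkings inside $\bigoplus\sl(V_{\la})$ and self-duality of $\la$ as a module over the image of $\mathcal{LG}'_n(\alpha)$). On $\mathcal{LG}'_n(\alpha)$ the center acts by zero, so central characters separate nothing; this is precisely why the hypothesis $d(\la_i)>1$ appears --- one-dimensional $\rho'$'s are trivial, hence self-dual --- and your parenthetical remark that your argument ``does not even need $d(\la_i)>1$'' is the tell that you have proved a strictly weaker statement. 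The paper instead runs the additive analogue of the shift argument: an isomorphism $\rho'_{\la_1}\cong(\rho'_{\la_2})^{*}$ forces eigenvalues to pair off with constant sum $x+y$, $x,y\in X$, and for generic $\alpha$ the sum sets $X_+^{=}$ and $X_+^{\neq}$ are disjoint with pairwise distinct elements, again forcing singleton spectra and hence $d=1$. So while your branching-rule induction and parent-recovery formula ($a+k$ as the maximal parent level, $a$ minimal among those) are correct in the interior case, the proposal fails at both points where the genericity of difference and sum sets in $X$ --- the actual content of the paper's proof --- is required.
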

\begin{proof}
As in \cite{BMW}, $\mathcal{LG}_n'$ is generated by the $t'_{ij} = t_{ij}-2T_n/n(n-1)$
with $T_n = \sum_{1 \leq r,s \leq n} t_{rs}$ and $T_n$ acts by a scalar
on a given irreducible representation. For $\rho'_{\la_1}$ and
$\rho'_{\la_2}$ to be isomorphic, $\rho'_{\la_1}(t'_{12})$ and
$\rho'_{\la_2}(t'_{12})$ should be conjugate, hence $(\rho_{\la_1}(T_n) - \rho_{\la_2}(T_n))/n(n-1)$
should then belong to $X - X$, where $X =\{ -2 \alpha^2, -2 (\alpha+1)^2, -2\alpha(\alpha+1) \}$
is the set of possible eigenvalues for $\rho_{\la_1}(t_{ij})$ and $\rho_{\la_2}(t_{ij})$.
For generic $\alpha$ the set $X +(-X) = \{ 0 \} \sqcup (-2)\times \{ 2 \alpha+1, \alpha + 1, \alpha, - \alpha, -\alpha-1,-2 \alpha -1 \}$
has cardinality $7$. This implies that either
$(\rho_{\la_1}(T_n) - \rho_{\la_2}(T_n))/n(n-1) = 0$, or both $\rho_{\la_1}(t_{12})$ and
$\rho_{\la_2}(t_{12})$ have a single eigenvalue (as,  for $x,y \in X$, $x-y$ then determines $x$ and $y$
when $x -y\neq 0$). By semisimplicity of the action of $t_{12}$ this implies that the images of
the $t_{ij}$'s are scalars in both $\rho_{\la_1}$ and $\rho_{\la_2}$. By irreducibility this implies
that $\dim \rho_{\la_1} = \dim \rho_{\la_2} = 1$.
The case of the dual representations is similar, using that, for $x \in X$, $2x \in X_+^{=} =  (-2) \times \{ 2 \alpha^2,
2 \alpha^2 + 2 \alpha, 2 \alpha^2 + 4 \alpha + 1 \}$ and,
for $x,y \in X$ with $x \neq y$, $x+y \in X_+^{\neq} = (-2) \times \{ 2 \alpha^2 + 2 \alpha +1, 2 \alpha^2 + \alpha , 2 \alpha^2 + 3 \alpha +1 \}$ ;
again, for generic $\alpha$, $X_+^{\neq} \cap X_+^{=} = \emptyset$, $X_+^{\neq}=3$,
and we get a contradiction unless $\dim \rho_{\la_1} = \dim \rho_{\la_2} = 1$.

\end{proof}

We let $V_{[a,k]_r}$ denote the $[a,k]_r$-component of $LG_n$.

\begin{theor} \label{theoimlie} For generic $\alpha$, the image of $\mathcal{LG}'_n(\alpha)$ inside $LG_n(\alpha) = \bigoplus \End(V_{[a,k]_r})$
is $\bigoplus \sl(V_{[a,k]_r})$.
\end{theor}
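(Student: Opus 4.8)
The plan is to identify the semisimple Lie algebra $\mathfrak{g}' := \mathcal{LG}'_n(\alpha)$ with $\bigoplus_\la \sl(V_{[a,k]_r})$ through its action on the irreducible components, following the template of \cite{BMW}. Since $\mathcal{LG}_n(\alpha)$ is reductive with one-dimensional centre $\kk T_n$, its derived subalgebra $\mathfrak{g}'$ is semisimple, and being spanned by commutators it acts by traceless operators, so $\rho'_\la(\mathfrak{g}') \subseteq \sl(V_\la)$ for every $\la = [a,k]_r$. Write $\mathfrak{h}_\la := \rho'_\la(\mathfrak{g}')$ for the image; as a quotient of $\mathfrak{g}'$ it is semisimple. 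By theorem \ref{theosurjcommut} the associative algebra generated by the $\Omega_{ij}$ equals $LG_n(\alpha) = \bigoplus_\la \End(V_\la)$; projecting onto a single factor shows that each $V_\la$ is an irreducible module over $\mathcal{LG}_n(\alpha)$, hence over $\mathfrak{g}'$ as well, since the central element $T_n$ acts by a scalar and therefore does not alter the lattice of submodules. The previous proposition ensures moreover that the modules $V_\la$ with $d(\la) > 1$ are pairwise non-isomorphic over $\mathfrak{g}'$ and that none of them is self-dual or dual to another.

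The main step, and the main obstacle, is to prove that $\mathfrak{h}_\la = \sl(V_\la)$ whenever $d(\la) > 1$ (the case $d(\la) = 1$ being trivial as $\sl_1 = 0$). Here $\mathfrak{h}_\la$ is a semisimple irreducible subalgebra of $\sl(V_\la)$ generated by the images $\rho'_\la(t'_{ij})$ of the $t'_{ij} = t_{ij} - \frac{2}{n(n-1)} T_n$; each such generator is semisimple with at most three distinct eigenvalues, namely the three values of $\Omega_{ij}$ computed in section \ref{sectVV}, shifted by the scalar $\frac{2}{n(n-1)}\rho_\la(T_n)$. I would exclude a proper irreducible subalgebra by ruling out the maximal ones. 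First, $\mathfrak{h}_\la$ cannot preserve a non-degenerate bilinear form: otherwise $V_\la$ would be self-dual as an $\mathfrak{h}_\la$-module, hence as a $\mathfrak{g}'$-module, contradicting the non-self-duality of the previous proposition (its case $\la_1 = \la_2$). Thus $\mathfrak{h}_\la \not\subseteq \so(V_\la)$ and $\mathfrak{h}_\la \not\subseteq \sp(V_\la)$. The remaining proper maximal irreducible subalgebras — the tensor-product subalgebras attached to a factorization $V_\la \simeq U_1 \otimes U_2$ and the simple subalgebras acting through a non-standard representation — are excluded as in the orthosymplectic situation of \cite{BMW}, using the constraint on the number and multiplicities of the eigenvalues of the generators $\rho'_\la(t'_{ij})$ (alternatively, one may argue by induction on $n$ through the inclusion $\mathcal{LG}_{n-1}(\alpha) \subseteq \mathcal{LG}_n(\alpha)$ and the branching encoded in the Bratteli diagram). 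I expect this classification step to be the delicate part of the argument.

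Once each projection $\mathfrak{g}' \to \sl(V_\la)$ is known to be onto, it remains to assemble the factors. The map $\mathfrak{g}' \hookrightarrow \bigoplus_{d(\la) > 1} \sl(V_\la)$ realises $\mathfrak{g}'$ as a semisimple subalgebra of a product of simple Lie algebras surjecting onto each factor, so by the standard Goursat-type description it is a direct sum of diagonal copies indexed by the classes of the relation identifying $\la$ with $\mu$ whenever some isomorphism $\sl(V_\la) \xrightarrow{\sim} \sl(V_\mu)$ carries the $\mathfrak{g}'$-action on $V_\la$ to that on $V_\mu$. Such a linking forces $\dim V_\la = \dim V_\mu$ and, since under any automorphism of $\sl_m$ the standard representation is sent to the standard one or to its dual, it forces $V_\mu \simeq V_\la$ or $V_\mu \simeq V_\la^*$. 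Both are impossible by the non-isomorphism and non-duality of the previous proposition (the self-linking case $\la = \mu$ being excluded by non-self-duality). Hence every class is a singleton, the inclusion is an equality, and $\mathfrak{g}' = \bigoplus_\la \sl(V_{[a,k]_r})$, the summands with $d(\la) = 1$ being zero, as claimed.
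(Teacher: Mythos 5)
Your overall frame agrees with the paper's: reduce to proving that each projection $\rho'_\la(\mathcal{LG}'_n(\alpha)) \subseteq \sl(V_\la)$ is onto, exclude $\so(V_\la)$ and $\sp(V_\la)$ via the non-self-duality statement of the preceding proposition, and then assemble the factors by a Goursat-type argument in which an identification of two factors would force $V_{\la} \simeq V_{\mu}$ or $V_{\mu} \simeq V_{\la}^*$, both ruled out by that same proposition. All of that is sound and is exactly how the paper (following \cite{BMW,LIETRANSP}) sets things up. But the point where you write that the tensor-product subalgebras and the simple subalgebras in non-standard representations ``are excluded as in the orthosymplectic situation of \cite{BMW}, using the constraint on the number and multiplicities of the eigenvalues of the generators,'' and that you ``expect this classification step to be the delicate part,'' is precisely where the theorem lives, and your proposal does not supply it. The eigenvalue constraint alone (each $\rho'_\la(t'_{ij})$ semisimple with at most three eigenvalues) is not what the paper uses to kill these subalgebras, and it is not clear it suffices: nothing in it immediately obstructs, say, an irreducible tensor-product action in which the $t'_{ij}$ still have small spectrum. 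So as written the central surjectivity claim $\rho'_\la(\mathcal{LG}'_n(\alpha)) = \sl(V_\la)$ is asserted, not proved.

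The paper closes this gap by the parenthetical alternative you mention but do not carry out: an induction on $r$ along the tower, exploiting the multiplicity-free branching. If $\Res\, \la$ has $s \leq 4$ irreducible components $\mu_1, \dots, \mu_s$, the induction hypothesis gives that the image of $\mathcal{LG}'_{r-1}$ in $\End(V_\la)$ has rank $\sum_j \dim \mu_j - s = \dim V_\la - s$, which exceeds $(\dim V_\la)/2$ as soon as $\dim V_\la > 2s$; lemma 3.1 of \cite{IH2} then forces the image of $\mathcal{LG}'_r$ to be all of $\sl(V_\la)$ (the rank bound is what rules out tensor-product and non-standard simple subalgebras, since those have small rank relative to $\dim V_\la$). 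The bulk of the paper's proof is then a careful enumeration of the finitely many $\la$ with $\dim V_\la \leq 2s$ --- e.g.\ $[1,0]_4$, $[1,0]_5$, $[0,1]_3$, $[0,1]_4$, $[0,2]_4$, $[1,1]_4$, the families $[a,0]_r$, $[r-2,1]_r$, $[1,r-2]_r$, $[a,r-a-1]_r$ --- each handled with lemmas 3.3 and 3.4 of \cite{IH2} (simplicity when $\dim \la < (1+\rk\,\h)^2$, then elimination of the simple candidates by self-duality of their small representations). Your proposal would need both this inductive rank mechanism and the case analysis of the small representations to become a proof; without them it establishes only the reductions surrounding the main step.
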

\begin{proof}
The proof is by induction on $r$, and follows the same general pattern as in \cite{BMW}.
We will use freely the Lie-theoretic results of \cite{IH2}. Assuming the result known for
$r-1$ (and the cases $r \leq 2$ being trivial), we only need to check that, for $\la = [a,k]_r$,
$\rho_{\la}(\mathcal{LG}_n) = \sl(V_{[a,k]_r})$, by the same argument as in \cite{BMW,LIETRANSP}.
By abuse, we denote $\la = \rho_{\la}$, $\Res_i$ the restriction from $\mathcal{LG}_r$
to $\mathcal{LG}_i$ for $i \leq r$, and $\Res = \Res_{r-1}$.
First note that, if $\Res \la$ has $s$ irreducible components $\mu_1,\dots,\mu_s$
occuring with multiplicity one, then by the induction assumption the rank
of $\rho_{\la}(\mathcal{LG}_{r-1})$ is $(\sum \dim \mu_j) - s = \dim V_{\la} - s$,
hence the rank of $\rho_{\la}(\mathcal{LG}_{r-1})$ is at least $\dim V_{\la} - s > (\dim V_{\la})/2$
as soon as $\dim V_{\la} > 2s$. By \cite{IH2} lemma 3.1 this implies the conclusion
$\rho_{\la}(\mathcal{LG}_{r-1})$.

We will show that this assumption is almost always satisfied, and we will check
separately the remaining cases. First note that the restriction is always multiplicity
free, and that the number of components is at most 4. More precisely, for $a \leq r-1$
and $k \leq r-a-1$, $\Res [a,k]_r$ contains
\begin{enumerate}
\item $[a+1,k-1]_{r-1}$ if $a \leq r-3$ and $1 \leq k \leq r-a-2$
\item $[a,k]_{r-1}$ if $a \leq r-2$ and $k \leq r-a-2$
\item $[a,k-1]_{r-1}$ if $a \leq r-2$ and $1 \leq k$
\item $[a-1,k]_{r-1}$ if $a \geq 1$
\end{enumerate}

Moreover we notice that the assumptions of lemma 3.3 (I) of \cite{IH2}
are satisfied as soon as $\dim \la < (1+ \rk \h)^2$ : under this
condition, this lemma implies that $\g$ is simple.

We first deal with a few special cases. For a given $\la$,
we denote $\g$ the image of $\mathcal{LG}'_r$ and $\h$ the
image of $\mathcal{LG}'_{r-1}$.

\medskip

\noindent {\it  The cases $[a,0]_r$, $[r-2,1]_r$ and $[1,r-2]_r$.} \\
The 1-dimensional cases $[0,0]_r$ and $[r-1,0]_r$ are trivial.
We have $\Res [1,0]_r = [0,0]_{r-1} + [1,0]_{r-1}$, which implies $\dim [1,0]_r = r-1$.
Let $\la = [1,0]_3$. Since $\dim \la = 2$, $\g \subset \sl_2$ and $\g$ is semisimple hence $\g = \sl_2$.
Let $\la = [1,0]_4$. Then $\h = \sl_2 \subset \g \subset \sl_3$. If $\g$ has rank $2>3/2$ then
$\g = \sl_3$ by \cite{IH2} lemma 3.1 ; otherwise $\g$ has rank 1, that is $\g \simeq \sl_2$. But
the only 3-dimensional irreducible representation of $\sl_2$ is selfdual, which excludes this case.
Let $\la = [1,0]_5$. Then $\h = \sl_3 \subset \g \subset \sl_4$. If $\rk \g > 2$ then $\g = \sl_4$
by \cite{IH2} lemma 3.1 ; otherwise $\g$ has rank 2; since $4 < (2+1)^2$, by lemma 3.3 of \cite{IH2}
$\g$ is simple. Since the $4$-dimensional representations of $\so_5$ and $\sp_4$ are selfdual,
this implies $\g = \sl_4$. Let now $\la = [1,0]_r$ for $r > 5$. Then $\rk \h = r-3 > (r-1)/2 = (\dim \la)/2$
and $\g = \sl_{r-1}$, which concludes the case of $[1,0]_r$. The case of $[r-2,0]_r$ is similar.

Let now $\la = [a,0]_r$ for $a > 1$ and $a < r-2$, which implies $r \geq 5$.
Then $\Res \la = [a-1,0]_{r-1} + [a,0]_{r-1}$ hence we need to show that
$\dim \la > 4$ in order to apply lemma 3.1 of \cite{IH2}. It is easily checked
that $\Res_4 [a,0]_r$ always contains $[1,0]_4 + [2,0]_4$, of dimension $3+3 = 6$,
and this concludes the case of the $[a,0]_r$.

The same arguments provide the case of the $[r-2,1]_r$ and of the $[1,r-2]_r$.

\medskip

\noindent {\it  The cases of $[0,1]_3$, $[0,1]_4$, $[0,2]_4$.} \\
Let $\la = [0,1]_3$, of dimension $3$. Then $\g \subset \sl_3$ ;
if $\rk \g = 1$ then $\g \simeq \sl_2 \simeq \so_3$, but the 3-dimensional irreducible representation
of $\g$ is selfdual, a contradiction. Thus $\rk \g = 2 > 3/2$ and $\g \simeq \sl_3$ by lemma 3.1 of \cite{IH2}.

Let $\la = [0,1]_4$, of dimension 6. We have $\Res \la = [0,0]_3 + [0,1]_3 + [1,0]_3$ hence
$\h \simeq \sl_3 \times \sl_2$ and $\rk \g \geq 3$. If $\rk \g > 3  = 6/2$ we are done, so we need
to exclude the case $\rk \g = 3$. In that case, by \cite{IH2} lemma 3.3, we get that $\g$ is simple,
and by \cite{IH2} lemma 3.4 $\g \simeq \sl_4$ and $\la$ is selfdual, a contradiction.
The case of $\la = [0,2]_4$ is similar to $[0,1]_4$.

\medskip

\noindent {\it  The case of $[a,r-a-1]_r$ for $1 \leq a \leq r-3$.} \\
This case implies $r \geq 4$. Let $\la = [a,r-a-1]_r$. Then
$\Res \la = [a,r-a-2]_{r-1} + [a-1,r-a-1]_{r-1}$, and we have the conclusion
as soon as $\dim \la > 4$. Note that if $a \leq r-4$ (resp. $a \geq 2$) then
$[a,r-a-2]_{r-1}$ belongs to the case under study for $r-1$ : by induction
this shows that $\dim \la > 4$ for $r \geq 6$, as $[1,4]_6$, $[2,3]_6$, $[3,2]_6$
have for dimensions $5$, $10$ and $19$. There remains to study the cases
of $[1,2]_4$ and $[1,3]_5$, since $\dim [2,2]_5 = 6 > 4$. 
For $\la = [1,2]_4$ we have $\dim \la = 3$ and the case is similar to the
one of $[0,1]_3$ above. For $\la = [1,3]_5$ we have $\dim \la = 4$ and the
case is similar to the one of $[1,0]_5$ above. This concludes these cases.

\medskip


Since the case $k = 0$ has been done, we can assume $k \geq 1$. If $a = r-1$,
this implies $k = 0$, so we can also assume $a \leq r-2$. If $a = r-2$, then
$k \leq 1$ hence $k = 1$, and this is the case of the $[1,r-2]_r$, which is also done.
Thus $a \leq r-3$, $k \geq 1$. 

Assume for a while $a = 0$. Then $k \leq r-1$ ; the 1-dimensional
case $[0,r-1]_r$ is trivial hence we can assume $k \leq r-2  = r-a-2$. Then
$\Res \la$ has 3 simple components (1), (2) and (3), hence we are done if
we know that $\dim \la > 6$. For $r \geq 6$, it is easily checked
that $\Res_5 [0,k]_5$ contains one of the $[0,l]_5$ for $1 \leq l \leq 3$,
which have dimension $> 6$. The remaining cases are $[0,1]_3$, $[0,1]_4$, $[0,2]_4$,
which have already been tackled.

We can now assume $1 \leq a \leq r-3$ and $k \geq 1$. Since the case
$k = r-a-1$ has been tackled we can assume $k \leq r-a-2$. In that case,
$\Res \la$ admits the 4 components (1)-(4), so we are done if
$\dim \la > 8$. Note that, under our conditions
$1 \leq a \leq r-3$ and $1 \leq k \leq r-a-2$, for $r \geq 5$,
at least one of the components (2),(3) and (4) fulfills
the same conditions for $r-1$. Since $\dim [1,1]_5 = \dim [1,2]_5 = 20$
and $\dim [2,1]_5 = 15$, this shows by induction that $\dim \la > 8$ for $r \geq 5$.
The only remaining case is for $\la = [1,1]_4$. We have $\rk \h = 4$ hence
$\rk \g \geq 4$. If $\rk \g > 4 = (\dim \la)/2$ we are done, so we
need to exclude the case $\rk \g = 4$. By lemma 3.3 (I) of \cite{IH2} we
know that $\g$ is simple, and by lemma 3.4 of \cite{IH2} the fact
that $\la$ is not selfdual provides a contradiction. This concludes the proof
of the theorem.
\end{proof}

Theorem \ref{theoimlie} implies the `Zariski-closure' part of theorem \ref{theoimB}
of the introduction, as in \cite{BMW} for the BMW-algebra
and \cite{LIETRANSP} for the Hecke algebra. The remaining part of
the theorem is proved in the section below.

\subsection{Faithfulness of $B_n \to LG_n(\alpha)^{\times}$}


Here we assume that $K$ is a field of characteristic $0$ and
$a,b,c \in K^{\times}$ three algebraically independent elements.
For convenience we moreover assume that $-1$, $a$ and $b$ admit square roots in $K$.
We let $S_n$ denote the $1$-dimensional $K B_n$ module defined by $s_i \mapsto a$,
$U_n$ denote the $(n-1)$-dimensional $K B_n$ module afforded by the
reduced Burau representation (convention : the image of $s_i$ has eigenvalues
$a$ with multiplicity $(n-2)$
and
$b$ with multiplicity $1$). Recall that the Krammer representation $Kr_n$
defined in \cite{KRAMMER4}
is a $n(n-1)/2$-dimensional irreducible representation 
of $B_n$ over $\Q(q,t)$, such that the image of $s_i$ has 3 eigenvalues $1, -q, tq^2$. This provides a faithful representation of $B_n$.
Up to renormalization ($s_i \mapsto \la s_i$) and change of parameter we can assume instead that
the three eigenvalues are $a,b,c$ and that this representation is defined over $K$, without
affecting the faithfulness property.

This representation factors through the BMW-algebra. Usually this algebra is defined over the field $\Q(s,\alpha)$ of rational fractions
(see e.g. \cite{BMW}), however under the same renormalization process $s_i \mapsto \la s_i$ one can define
it over $K$ in such a way that the image of $s_1$ has eigenvalues $a,b,c
$
(explicitely, $a = \la s, b = -\la s^{-1}, c = -\la \alpha^{-1}$, and conversely
$\la = \sqrt{-ab}, s = \sqrt{-a/b}, \alpha = - c^{-1}\sqrt{-ab} $, whence our assumptions on $K$).
The algebra $BMW_n$ is semisimple, with irreducible representations parametrized by partitions of $m$
for $0 \leq m \leq n$ and $n-m$ an even integer.  For $n = 2$, the empty partition $\emptyset$ of $m=0$  corresponds to $s_1 \mapsto c$
while $[2]$ and $[1,1]$ correspond to $s_1 \mapsto a$ and  $s_1 \mapsto b$, respectively. Under this
convention, $Kr_n$ is the irreducible component labelled by the partition $[n-2]$, and the reduced Burau representation
is labelled by $[n-1,1]$.


It thus has the property that
$Kr_2$ is $s_1 \mapsto c$
and the restriction of $Kr_{n+1}$ to $B_n \subset  B_{n+1}$ is
$S_{n} + U_n + Kr_n$. In diagrammatic terms, $Kr_n$ has a Bratteli diagram of the form
$$
\xymatrix{
Kr_n \ar@{-}[d]  \ar@{-}[dr] \ar@{-}[drr] \\
Kr_{n-1} \ar@{-}[d]  \ar@{-}[dr] \ar@{-}[drr] & U_{n-1} \ar@{-}[d]  \ar@{-}[dr]  & S_{n-1} \ar@{-}[d] \\
Kr_{n-2} \ar@{-}[d]  \ar@{-}[dr] \ar@{-}[drr] & U_{n-2} \ar@{-}[d]  \ar@{-}[dr]  & S_{n-2} \ar@{-}[d] \\
\dots & \dots & \dots
}
$$

We prove that this property characterizes the Krammer representation.

\begin{prop} Let $Kr_n$ for $n \geq 2$ denote a family of irreducible $K B_n$-modules, with the
property that the restriction of $Kr_{n+1}$ to $K B_n \subset K B_{n+1}$ is
$S_{n} + U_n + Kr_n$, and that $Kr_2$ is the $1$-dimensional $K B_2$-module $s_i \mapsto c$. Then $Kr_n$ is isomorphic to the Krammer representation.
\end{prop}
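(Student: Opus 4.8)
The plan is to prove the statement by induction on $n$, using the characterization by branching together with rigidity of the restriction functor. The base case $n=2$ is given by hypothesis: $Kr_2$ is the $1$-dimensional module $s_i \mapsto c$, which matches the Krammer representation of $B_2$. For the inductive step, I would assume that $Kr_n$ (the abstractly-defined family member) is isomorphic to the genuine Krammer representation $Kr_n^{\mathrm{geo}}$ of $B_n$, and then deduce the same for $Kr_{n+1}$.

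First I would note that by the branching hypothesis, $\Res_{B_n} Kr_{n+1} = S_n + U_n + Kr_n$, and that by the induction hypothesis this is exactly $\Res_{B_n} Kr_{n+1}^{\mathrm{geo}}$, since the geometric Krammer representation has precisely this Bratteli diagram (as recalled in the excerpt). The key point is that the restriction to $B_n$ is \emph{multiplicity free}: the three constituents $S_n$, $U_n$, $Kr_n$ are pairwise non-isomorphic irreducible $KB_n$-modules (they have distinct dimensions $1$, $n-1$, $n(n-1)/2$ for $n \geq 3$, and distinct sets of eigenvalues for the image of $s_1$). By Clifford/Schur-type reasoning, an irreducible $KB_{n+1}$-module is determined up to isomorphism by its restriction to $B_n$ together with the action of one extra generator $s_n$ intertwining the summands, \emph{provided} the restriction is multiplicity-free and the extension is irreducible. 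So I would argue that both $Kr_{n+1}$ and $Kr_{n+1}^{\mathrm{geo}}$ are irreducible $KB_{n+1}$-modules with the same multiplicity-free restriction to $B_n$, hence lie in a moduli space of such extensions that I must show is a single point.

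The main obstacle — and the step I expect to require real work — is showing that the extension is \emph{unique}, i.e. that there is at most one way (up to isomorphism) to extend the given $B_n$-module structure to an irreducible $B_{n+1}$-module with $s_n$ acting with eigenvalues $a,b,c$. The space of candidate actions of $s_n$ is constrained by the braid relations $s_n s_i = s_i s_n$ for $i \leq n-2$ and $s_n s_{n-1} s_n = s_{n-1} s_n s_{n-1}$, together with the cubic relation on $s_n$. The commutation relations force $\rho(s_n)$ to be block-scalar with respect to the isotypic decomposition of $\Res_{B_{n-1}}$, and the braid relation with $s_{n-1}$ pins down the off-diagonal structure. I would carry this out by restricting further to $B_{n-1}$, where $\Res_{B_{n-1}} Kr_{n+1}$ decomposes with the combinatorics encoded in the Bratteli diagram above, and then invoke the rigidity argument in the style of the theorem on $H_n$ quoted earlier (uniqueness of isomorphisms up to inner automorphism, since distinct irreducibles are separated by their restriction to the preceding term of the tower). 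Since $BMW_{n+1}$ is split semisimple with irreducibles labelled by partitions, and $Kr_{n+1}^{\mathrm{geo}}$ is the constituent labelled $[n-1]$, the prescribed branching singles out a unique irreducible of $BMW_{n+1}$; matching eigenvalues and the multiplicity-one restriction then forces $Kr_{n+1} \simeq Kr_{n+1}^{\mathrm{geo}}$, completing the induction.
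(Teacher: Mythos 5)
There is a genuine gap, and it sits exactly at the step your argument leans on hardest. Your concluding move appeals to the classification of irreducible $BMW_{n+1}$-modules by partitions, but nothing in your proposal establishes that the abstract module $Kr_{n+1}$ factors through $BMW_{n+1}$ in the first place: a priori it is only a $KB_{n+1}$-module, and $KB_{n+1}$ (indeed even its cubic Hecke quotient $H_{n+1}$, which is infinite dimensional for $n+1 \geq 6$) has far too many irreducible representations for branching data alone to single one out. For the same reason, the ``Clifford/Schur-type'' principle you invoke --- that an irreducible $KB_{n+1}$-module is determined by a multiplicity-free restriction to $B_n$ together with one extra generator, so that the moduli of extensions is a point --- is not a valid general fact; it is precisely the uniqueness statement that needs proof, and your sketch of it (block-scalar structure of $\rho(s_n)$ in $\End_{KB_{n-1}}(Kr_{n+1}) \simeq M_3 \times M_2 \times M_1$, then the braid relation with $s_{n-1}$) is a genuinely finite-dimensional computation that you defer rather than carry out. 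As written, the proof is circular: uniqueness of the extension is justified by the $BMW$ classification, which applies only after one knows the factorization you never prove.

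The paper's proof supplies the missing idea in one stroke: the defining relations of the $BMW$ algebra are supported on $3$ strands (and all adjacent-strand copies of $B_3$ in $B_{n+1}$ are conjugate), while the iterated branching hypothesis shows that $\Res_{B_3} Kr_{n+1}$ is a direct sum of irreducibles each of which factors through $BMW_3$; hence the $BMW$ relations are killed by $Kr_{n+1}$ itself, and $Kr_{n+1}$ factors through $BMW_{n+1}$. Only then does the combinatorial check on Bratteli diagrams (the restriction $S_n + U_n + Kr_n$ corresponds to $[n+1-1,1] + [n+1]$-type additions and the removal giving $[n-2]$, forcing the label $[(n+1)-2]$) close the argument; the low cases $n=2,3$ are handled directly via the explicit structure of $H_3$, which is also what your induction would need at the step $n=2 \to 3$. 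If you insert the $3$-strand-generation argument for the $BMW$ factorization, the rest of your outline (multiplicity-free branching plus the partition combinatorics) does go through and essentially reproduces the paper's proof.
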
 
\begin{proof}
When $n = 2$ there is nothing to prove, and for $n =3$ we know from the description of $H_3$ that there is up to isomorphism
only one irreducible 3-dimensional representation of $B_3$ where $s_1$ has 3 distinct eigenvalues $a,b,c$, so we
can assume $n \geq 4$. Then the restriction of $K_n$ to $B_3$ is a direct sum of irreductible representations which factorize through the Birman-Wenzl-Murakami algebra (up to a change of parameters and renormalization),
and it thus needs to factorize through the BMW-algebra ; since the relations for the BMW-algebra are generated by relations in $B_3$
this proves that $K_n$ itself factorizes through the BMW-algebra. It is then a simple combinatorial task to check that the only irreducible representations of the BMW-algebra for $n \geq 4$ with this Bratteli diagram corresponds to the partition $[n-2]$.
\end{proof}

\begin{cor} For $n \geq 2$, the Krammer representation factorizes through $LG_n$.
\end{cor}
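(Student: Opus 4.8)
The plan is to realize the Krammer representation explicitly as one of the irreducible summands of $LG_n$ and then to recognize it through the characterization of the preceding proposition. Since $LG_n$ is split semisimple, $LG_n \simeq \bigoplus_{[a,k]_n} \End(V_{[a,k]_n})$, every module $[a,k]_n$ occurring in the Bratteli diagram is \emph{by construction} a representation of $B_n$ that factors through $LG_n$; it therefore suffices to exhibit among these a family satisfying the hypotheses of the proposition. The natural candidate is the family $Kr_n := [0,1]_n$. First I would record that $[0,1]_2 = V(0,2\alpha+1)$ is the $1$-dimensional module on which $s_1$ acts by $c$, using the spectral data of section \ref{sectVV} as recalled at the beginning of section \ref{secthecke}; thus $Kr_2 = (s_1 \mapsto c)$, as required.

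The key computation is the branching rule. Reading the fusion rule of section \ref{sectstruct} backwards, the modules $[b,l]_n$ whose tensor product with $V(0,\alpha)$ contains $[0,1]_{n+1}$ are exactly $[0,0]_n$, $[1,0]_n$ and $[0,1]_n$, so that
$$
\Res_{LG_n} [0,1]_{n+1} = [0,0]_n \oplus [1,0]_n \oplus [0,1]_n.
$$
Here $[0,1]_n = Kr_n$ by induction, $[0,0]_n$ is the $1$-dimensional module on which $s_i$ acts by $b$, and $[1,0]_n$ is the module obtained along the branch of maximal weight increase. An immediate induction, either on the Bratteli diagram or on the fusion rule, gives $\dim [1,0]_n = n-1$; restricting all the way down to $B_2 = \langle s_1 \rangle$ shows that $s_1$ acts on $[1,0]_n$ with eigenvalue $a$ of multiplicity $1$ and $b$ of multiplicity $n-2$, so that $[1,0]_n$ is the reduced Burau representation. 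In other words the three summands are precisely $S_n$, $U_n$ and $Kr_n$ of the proposition, with the roles of the two algebraically independent parameters $a$ and $b$ exchanged relative to the convention fixed there (and $c$, hence $Kr_2$, left unchanged).

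I would then conclude by invoking the proposition. Its proof uses only that $s_1$ has three distinct eigenvalues $\{a,b,c\}$, which holds on $[0,1]_3$ since its restriction to $B_2$ is $a + b + c$, together with the fact that the restriction to $B_3$ factorizes through the Birman--Wenzl--Murakami algebra; both inputs are symmetric under the exchange of $a$ and $b$, and this exchange is an automorphism of $K B_n$ affecting neither faithfulness nor the $BMW$-factorization. Applying the proposition with $S_n, U_n$ replaced by their images under this relabeling identifies the family $[0,1]_n$ with the Krammer representation, and since each $[0,1]_n$ is a module for $LG_n$ this is exactly the desired factorization. The one step requiring genuine care — and the place where I expect the only real obstacle — is this eigenvalue bookkeeping: one must verify that the spectral data of the three summands of $\Res [0,1]_{n+1}$ matches, after the harmless $a \leftrightarrow b$ swap, the prescribed data $S_n$, $U_n$, $Kr_n$, so that the characterization of the preceding proposition genuinely applies.
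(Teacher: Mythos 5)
Your proof is correct and takes essentially the same route as the paper: the paper's one-line proof applies the preceding proposition to the tower $V_{[0,n-2]_n}$, while you apply it to $V_{[0,1]_n}$ --- the two towers are exchanged by the symmetry $[a,k]_r \mapsto [a,r-1-a-k]_r$ (i.e.\ $\alpha \mapsto -1-\alpha$, which swaps $b$ and $c$), and in both cases the hypotheses hold only up to a harmless permutation of the algebraically independent eigenvalue parameters, a relabeling the paper leaves implicit in its ``obviously''. Your branching computation $\Res\, [0,1]_{n+1} = [0,0]_n \oplus [1,0]_n \oplus [0,1]_n$, the identification of $[1,0]_n$ with the reduced Burau representation (consistent with the paper's own correspondence $[m,0]_n \mapsto [n-m,1^m]$ with hook partitions), and the $a \leftrightarrow b$ bookkeeping are all accurate.
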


\begin{proof}
The $K B_n$-module $V_{[0,n-2]_n}$
obviously satisfies the assumption, and factors through $LG_n$.
\end{proof}

\begin{cor} For $n \geq 2$, the morphism $B_n \to LG_n^{\times}$ is into.
\end{cor}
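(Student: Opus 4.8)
The plan is to obtain the injectivity at once from the faithfulness of the Krammer representation, combined with the corollary just proved. Recall that by \cite{KRAMMER4} the Krammer representation $Kr_n$ is a \emph{faithful} representation of $B_n$ (for $n = 2$ the relevant representation is the $1$-dimensional module $s_1 \mapsto c$, which is faithful since $c$ has infinite order). This faithfulness is the one genuinely deep input; everything else in the argument will be formal.

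Concretely, the preceding corollary exhibits the Krammer representation as a factor of $LG_n$: the $K B_n$-module $V_{[0,n-2]_n}$ satisfies the characterizing assumption of the previous proposition, hence is isomorphic to $Kr_n$, and by construction its $B_n$-action factors through $LG_n$. Thus there is an algebra morphism $\psi : LG_n \to \End(V_{[0,n-2]_n})$ such that the composite $K B_n \to LG_n \to \End(V_{[0,n-2]_n})$, with the second arrow given by $\psi$, is (isomorphic to) $Kr_n$. Restricting to groups of invertible elements and to the braid group, I would then read off the commuting triangle of group morphisms $B_n \to LG_n^{\times} \to \GL(V_{[0,n-2]_n})$ whose composite is $Kr_n$. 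Since this composite $B_n \to \GL(V_{[0,n-2]_n})$ is injective, being the faithful Krammer representation, the first arrow $B_n \to LG_n^{\times}$ is forced to be injective as well: a group homomorphism whose post-composition with another homomorphism is injective is itself injective. This completes the proof, giving $B_n \into LG_n^{\times}$.

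I do not expect any real obstacle here. The only thing requiring a word of justification is that the $B_n$-action on $V_{[0,n-2]_n}$ obtained through $LG_n$ genuinely coincides with the Krammer action, which is precisely the identification supplied by the previous corollary. The two substantive ingredients are imported from elsewhere: the topological faithfulness of $Kr_n$ from \cite{KRAMMER4}, and the appearance of $Kr_n$ inside $LG_n$ from the corollary above. What remains is the elementary observation about composites of homomorphisms, so the statement follows immediately.
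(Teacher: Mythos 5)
Your proposal is correct and is exactly the paper's argument: the paper's own proof reads ``Immediate consequence of the faithfulness of the Krammer representation,'' relying on the preceding corollary that $Kr_n$ (realized as $V_{[0,n-2]_n}$) factors through $LG_n$, which is precisely the factorization you spell out. Your extra remark about the $n=2$ case ($s_1 \mapsto c$ faithful since $c$ is of infinite order) is a harmless elaboration of the same route.
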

\begin{proof} Immediate consequence of the faithfulness of the Krammer representation.
\end{proof}


\bigskip



\end{document}